\newtheorem{theorem}{Theorem}[section]
\newtheorem{definition}[theorem]{Definition}
\newtheorem{lemma}[theorem]{Lemma}
\newtheorem{assumption}[theorem]{Assumption}
\newtheorem{corollary}[theorem]{Corollary}
\newtheorem{remark}[theorem]{Remark}
\newcommand{\R}{\mathbb{R}}
\newcommand{\N}{\mathbb{N}}
\newcommand{\Z}{\mathbb{Z}}
\newcommand{\Linf}{L^{\infty}(\Omega)}
\newcommand{\io}{\int_{\Omega}}
\newcommand{\Ltwo}{L^2(\Omega)}
\newcommand{\Hone}{H_0^1(\Omega)}
\newcommand{\dom}{\operatorname{dom}}
\newcommand{\sign}{\operatorname{sign}}
\newcommand{\proj}{\operatorname{proj}}
\newcommand{\prox}{\operatorname{prox}}
\newcommand{\ared}{\operatorname{ared}}
\newcommand{\pred}{\operatorname{pred}}
\newcommand{\cred}{\operatorname{cred}}
\newcommand{\Uad}{U_{ad}}
\newcommand{\Iuad}{I_{\Uad}}
\newcommand{\kp}{_{k+1}}
\begin{document}

	\title{A smoothed proximal trust-region algorithm for nonconvex optimization problems with $L^p$-regularization, $p\in (0,1)$ 
    \thanks{This work is partially supported by the Office of Naval Research (ONR) under Award NO: N00014-24-1-2147, NSF grant DMS-2408877, the Air Force Office of Scientific Research (AFOSR) under Award NO: FA9550-22-1-0248 and the German Research Foundation DFG under project grant Wa 3626/5-1.}
    \author{Harbir Antil\thanks{Department of Mathematical Sciences and Center for Mathematics and Artificial Intelligence (CMAI), George Mason University 
  ({hantil@gmu.edu}).}
\and Anna Lentz\thanks{Institut für Mathematik, Universität Würzburg
  ({anna.lentz@uni-wuerzburg.de}).}
}
}
	\maketitle

	\begin{abstract}
		We investigate a trust-region algorithm to solve a nonconvex optimization problem with $L^p$-regularization for $p\in(0,1)$. The algorithm relies on descent properties of a so-called generalized Cauchy point that can be obtained efficiently by a line search along a suitable proximal path. To handle the nonconvexity and nonsmoothness of the $L^p$-pseudonorm, we replace it by a smooth approximation and construct a convex upper bound of that approximation. This enables us to use results of a trust-region method for composite problems with a convex nonsmooth term.
		We prove convergence properties of the resulting smoothed proximal trust-region algorithm and investigate its performance in some numerical examples. Furthermore, approximate subproblem solvers for the arising trust-region subproblems are considered.
	\end{abstract}
	
	\section{Introduction}
	We consider the optimization problem 
	\begin{equation}\label{eq:minprobLp}
		\min_{u\in \Uad}  f(u) + \io |u|^p \dd x \qquad \text{for } p\in (0,1), \qquad \Uad \subseteq U,
	\end{equation}
	where $U$ represents either the space $\Ltwo$ or $\Hone$ for some bounded domain $\Omega\subset\R^d$, 
	$f:U\to \R$ is a Fréchet differentiable function with Lipschitz continuous gradient and $\Uad$ represents some admissible set.
	The nonconvex $L^p$-pseudonorm promotes sparsity in solutions, making this problem relevant in various applications. 
	However, the nonconvexity and nonsmoothness of the objective pose 
	theoretical and computational challenges. 
	The aim of this paper is to develop a trust-region algorithm that solves this problem numerically. \\
	Problems of the above form have already been studied thoroughly in the literature, for example in the context of inverse problems \cite{inverseLqRobin331} or sparse optimal control. In the latter case, the $L^p$-pseudonorm leads to sparse controls, which is for example desirable in actuator placement problems. The article \cite{lp_cont} investigates such optimal control problems, derives necessary optimality conditions and discusses existence of solutions and regularization methods. 
	Based on this regularization, \cite{sparse} develops necessary optimality conditions in the (fractional) Sobolev setting. \\ 
	Numerically solving such problems can be challenging due to the nonconvexity of the $L^p$-term.
	Existing approaches include proximal gradient methods \cite{15proxNonsmooth}, majorize-minorization methods with a smoothing scheme \cite{sparse, lp_cont} and difference-of-convex methods \cite{87huberApprox}. \\
	More general composite problems
	\[
	\min_{u\in U} f(u) + \phi(u)
	\]
	consisting of the sum of a (nonconvex) smooth function $f$ and a nonconvex and nonsmooth function $\phi$ can be solved with proximal gradient methods \cite{261convDescent}, other splitting methods \cite{DouglasRachfordNonconvDC325} or alternating minimization algorithms, often under the Kurdyka–Łojasiewicz condition \cite{264proxAlternating, proxAltMinNonconv332}.\\ 
	However, these methods are first order methods and thus possibly slowly converging. Therefore, improvements like an accelerated proximal gradient method were considered in \cite{260accProxGrad}.
	Second-order methods, such as proximal quasi-Newton (see e.g. \cite{globInexProxNewton333}) and trust-region algorithms offer faster convergence but traditionally require convexity.\\ 
	Such a trust-region method for composite problems with the nonsmooth part being convex was introduced in \cite{240proxTrustRegion}.  This trust-region algorithm uses a so-called generalized Cauchy point to compute a trial step that satisfies a descent condition which is sufficient for convergence of the algorithm, analogous to a Cauchy point in the standard trust-region algorithm. This generalized Cauchy point (GCP) can be computed efficiently using the proximal operator. This fact distinguishes the algorithm from other approaches as for example in \cite{275proxQuasiNewtonTr}. While that method also applies to nonconvex nonsmooth functions, it requires the computation of a proximal point of $\phi$ plus the indicator function of the trust-region, which is more difficult. \\
	However, extending the GCP approach from \cite{240proxTrustRegion} to nonconvex problems is non-trivial. The nonconvexity breaks key properties of the proximity operator, making it difficult to ensure descent conditions or global convergence. \\
	In this paper, we aim to generalize the trust-region algorithm from \cite{240proxTrustRegion} from the case of a convex function $\phi$ to the nonconvex $L^p$-pseudonorm.
	Due to the nonconvexity we were not able to prove that nonconvex generalized Cauchy points satisfy a descent condition that leads to convergence of the algorithm as in the convex case. At each iteration, we therefore construct a convex upper bound of the $L^p$-term with the help of a smooth approximation. This allows us to leverage some of the existing convergence theory from the convex case to the nonconvex $L^p$-case, but several new results are required.
	Thus, we first smooth and convexify the $L^p$-pseudonorm and then compute proximal points of the convex surrogate function. Another approach that we do not pursue further could be to keep working with the $L^p$-pseudonorm, but use an approximation of the proximal point instead. \\
	Moreover, \cite{240proxTrustRegion} allows for inexact objective function and derivative evaluations. This is relevant when $f$ or its gradient can not be evaluated efficiently as for example in PDE-constrained optimization. Therefore, we have also generalized the handling of inexactness from \cite{240proxTrustRegion} to our $L^p$-setting.\\
	The performance of trust-region algorithms depends significantly on the chosen subproblem solver. In \cite{240proxTrustRegion}, a spectral proximal gradient (SPG) method was used. Other subproblem solvers or extensions of the SPG method function were considered in \cite{284effProxSubproblTr, 282projProxGradTr}. However, those solvers make use of the convexity of the nonsmooth part in the composite problem, so they can not be directly transferred to the $L^p$-setting. Therefore, we use some kind of spectral proximal gradient method in our implementation, or a majorize-minimization (MM) algorithm based on the scheme in \cite{sparse}. The latter MM scheme does not involve proximal point computations of the $L^p$-pseudonorm. This also allows for the application in the $\Hone$-setting, where the proximal point computation can not be reduced to pointwise optimization problems as in the $\Ltwo$-case. \\
	The structure of the paper is as follows. Section \ref{sec:prelim} provides preliminaries by investigating properties of the proximal map in the nonconvex setting, deriving necessary optimality conditions for \eqref{eq:minprob} and introducing a smooth approximation of the $L^p$-pseudonorm. Based on this, section \ref{sec:algo} presents a trust-region algorithm  and studies its convergence properties. Section \ref{sec:GCP} analyzes the use of generalized Cauchy points in the trust-region algorithm to guarantee sufficient decrease of the iterates. The subsequent section \ref{sec:subproblem} examines numerical algorithms to solve the subproblems in each iteration of the trust-region algorithm. Finally, section \ref{sec:numRes} presents some numerical results.
		
	\section{Preliminaries}\label{sec:prelim}
	\subsection{Notation and standing assumptions}
	Throughout this paper, we adopt the Hilbert spaces $\Ltwo$ and $\Hone$ as the underlying function spaces. To refer to both of these spaces, we use the notation $U$ with norm $\|\cdot\|$ and inner product $(\cdot, \cdot)$. If we want to specify which of these two spaces is considered, we use the subscripts $\Ltwo$ or $\Hone$ whenever this is not clear from the setting of the respective section. We often identify the dual $U^*$ with $U$ via the Riesz representation theorem. \\
	Note that we could also work with fractional Sobolev spaces $H^s(\Omega)$ for $s\in (0,1]$ as in \cite{sparse}, but for simplicity we restrict ourselves to the non-fractional $\Hone$-case in this paper. \\
	The $L^p$-pseudonorm for $p\in (0,1)$ is denoted by
	\[
	j(u) \coloneqq \|u\|_p^p =\io |u|^p \dd x.
	\]
	It is called pseudonorm since it is concave and therefore not a norm.\\
    We rewrite problem \eqref{eq:minprobLp} as 
    \begin{equation}\label{eq:minprob}
        \min_{u\in U}  f(u) + \phi(u) 
    \end{equation}
    and impose the following assumptions.
	\begin{assumption}\label{ass:fPhi}
		\begin{enumerate}
			\item Let $\Omega\subset\R^d$ for $d\ge 1$ be a bounded domain.
			\item \label{ass:fPhifPart} $f:U\to \R$ is $L$-smooth, i.e. there exists an open set $W\subseteq U$ with $\dom \phi \subseteq W$ such that $f$ is Fréchet differentiable on $W$ and its gradient is Lipschitz continuous with Lipschitz constant $L>0$.
			\item  Let $\phi: U \to \bar\R$ be defined as
			\begin{equation}\label{eq:defPhi}
				\phi(u) = \beta \io |u|^p \dd x + \frac{\alpha}{2}\|u\|^2 + \Iuad(u)
			\end{equation}
			with $\alpha\ge 0, \beta>0$ and $p\in (0,1)$. For $U=\Ltwo$, the admissible set $\Uad$ is defined by
			\begin{equation}\label{def:Uad}
				\Uad \coloneq \{ u \in \Ltwo: a \le u \le b \}, 
			\end{equation}
			where $a, b\in \Ltwo$ such that $a< b$. \\
			For $U=\Hone$, we only consider the unconstrained case with $\Uad=\Hone$ and assume $\alpha>0$.
			\item The objective function $F\coloneq f + \phi$ is bounded below. 
		\end{enumerate}
	\end{assumption}
	For the convergence analysis later on we will impose some assumptions on the curvature of $f$. 
	We use the same notion of curvature as in \cite{240proxTrustRegion, 241globalConvTrustRegion}: For a Fréchet differentiable functional $h: U \to \R$, the curvature of $u\in U$ in direction $s\in U$ is defined as 
	\begin{equation}\label{eq:defCurvature}
		\omega(h,u,s) \coloneqq \frac{2}{\|s\|^2}(h(u+s)-h(u)-\braket{\nabla h(u),s}).
	\end{equation}
	As discussed in \cite{240proxTrustRegion}, the curvature  $\omega(f,u,s)$ of $f$ satisfying assumption \ref{ass:fPhi} is bounded by $L$ for $u, u+s \in W$.
	Furthermore, for quadratic $h$ with self-adjoint Hessian operator $B$ it holds 
	\[
	\omega(h,u,s) = \frac{\braket{s,Bs}}{\|s\|^2}.
	\]
	
		\subsection{Nonconvex proximal points}
		The proximal operator of a function $\tilde\phi:U \to \R$ is defined for some $r>0$ by	
		\[
		\prox_{r \tilde\phi}(u) = \arg\min_{v\in U}  \frac{1}{2r}\|v-u\|^2 + \tilde\phi(v).
		\]
		For convex functions, this map is known to be single valued (\cite[Chapter 12]{267convAnaMonOp}). However, this does not hold in general in the nonconvex case, see Remark \ref{rm:proxNotSingleVal} later on. In this paper, we also use the notation $\prox_{r\tilde\phi}(u) $ to refer to elements of $\prox_{r\tilde\phi}(u)$.\\
		Moreover, several other favorable properties from the convex case do not extend to the nonconvex regime. In view of this, we begin by examining which results from \cite{240proxTrustRegion} remain valid when the convexity assumption is dropped. \\
	We start by generalizing \cite[Lemma 1.1]{240proxTrustRegion}.
	
	\begin{lemma}\label{lm:lm1proxIneq}
		Let  $\tilde\phi: U\to \bar \R$ be some proper and lower semicontinuous function. For the proximal operator of $\tilde\phi$ it holds
		\[
		(\prox_{r\tilde\phi}(u)-u, v-\prox_{r\tilde\phi}(u)) + \frac{1}{2} \|\prox_{r\tilde\phi}(u) - v\|^2 \ge r \tilde\phi(\prox_{r\tilde\phi}(u)) - r \tilde\phi(v).
		\]
	\end{lemma}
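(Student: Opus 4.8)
The plan is to exploit the one defining feature of the proximal point that survives the loss of convexity --- namely that it is a global minimizer of the Moreau envelope integrand --- together with a purely algebraic Hilbert-space identity. Write $p \coloneqq \prox_{r\tilde\phi}(u)$ for a fixed element of the proximal set; such a minimizer exists because $\tilde\phi$ is proper and lower semicontinuous while the quadratic penalty $\tfrac{1}{2r}\|\cdot - u\|^2$ is coercive, so the infimum defining the prox is attained (when the set is multi-valued, $p$ denotes any one of its elements, consistent with the notational convention stated above).

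First I would write down the minimality of $p$. Since $p$ minimizes $w \mapsto \tfrac{1}{2r}\|w-u\|^2 + \tilde\phi(w)$ over all of $U$, comparing its value with that at an arbitrary $v \in U$ gives
\[
\frac{1}{2r}\|p-u\|^2 + \tilde\phi(p) \le \frac{1}{2r}\|v-u\|^2 + \tilde\phi(v).
\]
Rearranging the terms involving $\tilde\phi$ and multiplying through by $r>0$ yields
\[
r\tilde\phi(p) - r\tilde\phi(v) \le \tfrac{1}{2}\|v-u\|^2 - \tfrac{1}{2}\|p-u\|^2 .
\]
It then remains to recognize the right-hand side as the claimed expression. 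Setting $a \coloneqq p-u$ and $b \coloneqq v-u$, so that $v-p = b-a$, I would verify the elementary identity
\[
\tfrac{1}{2}\|b\|^2 - \tfrac{1}{2}\|a\|^2 = (a,\, b-a) + \tfrac{1}{2}\|a-b\|^2,
\]
which follows immediately by expanding $\|a-b\|^2 = \|a\|^2 - 2(a,b) + \|b\|^2$ and cancelling. Translating back, the right-hand side equals $(p-u,\, v-p) + \tfrac{1}{2}\|p-v\|^2$, and chaining this with the previous display produces exactly the asserted inequality.

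The noteworthy point, rather than any genuine obstacle, is that the argument never invokes convexity, monotonicity of the subdifferential, or single-valuedness of the proximal map: it relies solely on the defining minimization property and the parallelogram law. This is precisely why the estimate carries over verbatim from the convex setting of \cite{240proxTrustRegion}. The only mild care required is to fix $p$ as a single element of the possibly multi-valued proximal set and to guarantee that this set is nonempty, the latter being secured by lower semicontinuity together with coercivity of the quadratic term.
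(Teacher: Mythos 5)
Your proof is correct and follows essentially the same route as the paper's: both start from the minimality of the proximal point $p=\prox_{r\tilde\phi}(u)$ compared against an arbitrary $v$, and then apply the same norm expansion $\|v-u\|^2 = \|v-p\|^2 + 2(p-u,\,v-p) + \|p-u\|^2$ to rearrange the difference of quadratic terms into the claimed form. The only imprecision is your side remark that nonemptiness of the proximal set follows from lower semicontinuity plus coercivity --- in an infinite-dimensional Hilbert space attainment requires weak lower semicontinuity (or some compactness), not just strong lsc --- but since the lemma concerns a given element of the proximal set, this does not affect the validity of the argument.
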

	\begin{proof}
		From the definition of the proximal operator we obtain
		\[
		\tilde\phi(v) + \frac1{2r}\|v-u\|^2 \ge \tilde\phi(\prox_{r\tilde\phi}(u)) + \frac1{2r} \| \prox_{r\tilde\phi}(u) - u\|^2 \qquad \forall v\in U.
		\]
		Reordering and using $\| v-u\|^2 = \| \prox_{r\tilde\phi}(u) -v\|^2 + 2(\prox_{r\tilde\phi}(u) - u, v- \prox_{r\tilde\phi}(u)) + \| \prox_{r\tilde\phi}(u) -u\|^2$ yields
		\begin{multline}
			\tilde\phi(v) - \tilde\phi(\prox_{r\tilde\phi}(u)) \ge \frac1{2r} \left( \| \prox_{r\tilde\phi}(u) - u\|^2 - \| v -u\|^2 \right) 
			\\= \frac1{2r}\left[ -\|\prox_{r\tilde\phi}(u)-v\|^2 - 2(\prox_{r\tilde\phi}(u) - u, v-\prox_{r\tilde\phi}(u))\right].
		\end{multline}
		Multiplying by $-r$ yields the claim.
	\end{proof}

	\begin{lemma}\label{lm:phiPsi}
		Let  $\tilde\phi: U\to \bar \R$ be some proper and lower semicontinuous function, let $u, d \in U$, $r>0$ and define 
		\begin{align*}
			\Phi(r) &\coloneq \| \prox_{r\tilde\phi}(u + rd) - u\|, \label{eq:phiDef} \\
			\Psi(r) &\coloneq \Phi(r)/r .
		\end{align*}
		\begin{enumerate}
			\item Then the mapping $r\mapsto \Phi(r) $ is nondecreasing for $r>0$.
			\item For all $r>0$, it holds 
			\begin{equation}\label{eq:proxIneqLm2}
				(d, \prox_{r\tilde\phi}(u+rd) - u) + \tilde\phi(u) - \tilde\phi(\prox_{r\tilde\phi}(u + rd)) \ge \frac12 \Psi(r) \Phi(r).
			\end{equation}
		\end{enumerate}	
	\end{lemma}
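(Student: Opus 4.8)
The plan is to dispose of the second part immediately via Lemma \ref{lm:lm1proxIneq} and then obtain the monotonicity in the first part by playing the defining minimization inequality of the proximal operator against itself at two different scales. Throughout I write $p_r \coloneq \prox_{r\tilde\phi}(u+rd)$, so that $\Phi(r) = \|p_r - u\|$.

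For part (2) I would apply Lemma \ref{lm:lm1proxIneq} with the base point $u+rd$ in place of $u$ and the test point $v = u$. Since the proximal point at $u+rd$ is $p_r$, this gives
\[
(p_r - (u+rd),\, u - p_r) + \tfrac12\|p_r - u\|^2 \ge r\tilde\phi(p_r) - r\tilde\phi(u).
\]
Expanding the inner product as $-\|p_r - u\|^2 + r(d, p_r - u)$ and substituting, the quadratic terms collapse to $r(d, p_r - u) - \tfrac12\|p_r - u\|^2 \ge r\tilde\phi(p_r) - r\tilde\phi(u)$. Rearranging and dividing by $r$ yields \eqref{eq:proxIneqLm2}, since $\tfrac{1}{2r}\|p_r-u\|^2 = \tfrac12 \Psi(r)\Phi(r)$. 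This step is routine.

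For part (1), fix $0 < r_1 < r_2$ and let $p_1, p_2$ be any corresponding proximal points. Inserting $v = p_2$ into the minimization problem defining $p_1$ and $v = p_1$ into the one defining $p_2$, and multiplying through by $r_1$ and $r_2$ respectively, these read
\[
\tfrac12\|p_1 - u - r_1 d\|^2 + r_1\tilde\phi(p_1) \le \tfrac12\|p_2 - u - r_1 d\|^2 + r_1\tilde\phi(p_2),
\]
\[
\tfrac12\|p_2 - u - r_2 d\|^2 + r_2\tilde\phi(p_2) \le \tfrac12\|p_1 - u - r_2 d\|^2 + r_2\tilde\phi(p_1).
\]
The idea is to eliminate the $\tilde\phi$-values, which enter with coefficients $r_1$ and $r_2$. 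Isolating $\tilde\phi(p_1) - \tilde\phi(p_2)$ from each line, multiplying the first by $r_2$ and the second by $r_1$, and adding removes the $\tilde\phi$-terms and leaves $0 \le r_2 R_1 + r_1 R_2$, where $R_1, R_2$ denote the differences of the squared norms above. Writing $a = p_1 - u$, $b = p_2 - u$ and expanding the squares, the mixed inner products $(b - a, d)$ appear with coefficients $-r_1 r_2$ and $+r_1 r_2$ and cancel, so the whole expression reduces to $\tfrac12(r_2 - r_1)(\|b\|^2 - \|a\|^2) \ge 0$. Since $r_2 > r_1$, this forces $\|p_2 - u\| \ge \|p_1 - u\|$, i.e. $\Phi(r_2) \ge \Phi(r_1)$.

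The one point needing care, and the only place the nonconvexity enters, is that $\prox_{r\tilde\phi}$ may be set-valued, so $p_1, p_2$ are merely selections. The argument above uses nothing beyond the defining minimization inequality, which holds for every element of the proximal set; hence $\|p_1 - u\| \le \|p_2 - u\|$ is valid for arbitrary selections at the two scales and the monotonicity is well posed in this sense. I expect the cancellation of the cross terms to be the crux: the cross-multiplication by $r_1, r_2$ is precisely what annihilates both the $\tilde\phi$-values and the $(b-a,d)$ contributions, and without it the nonconvex, set-valued $\tilde\phi$ would leave uncontrolled terms.
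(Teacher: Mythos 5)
Your proposal is correct and takes essentially the same approach as the paper: part (2) is the identical application of Lemma \ref{lm:lm1proxIneq} with base point $u+rd$ and test point $v=u$, and part (1) combines the proximal optimality inequalities at the two scales $r_1,r_2$ with weights chosen to cancel the $\tilde\phi$-terms — your cross-multiplication by $r_2$ and $r_1$ is the paper's division by $r_1$ and $r_2$ up to an overall factor of $r_1 r_2$, and working from the raw minimization inequality rather than through Lemma \ref{lm:lm1proxIneq} is only a cosmetic difference, since that lemma is an exact rewriting of it. Your remark that the argument is valid for arbitrary selections from the possibly set-valued proximal map is also consistent with the paper's (implicit) treatment.
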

	\begin{proof}
		The proof of \cite[Theorem 10.9]{263firstOrderOpt} (c.f. \cite[Lemma 2]{240proxTrustRegion}) can be adapted to the nonconvex case using Lemma \ref{lm:lm1proxIneq} instead of the corresponding inequality for the convex case. Let $r_1>r_2$. By setting $r=r_1$, $u=u+r_1 d$ and $v = \prox_{r_2 \tilde\phi}(u+r_2 d)$ in Lemma \ref{lm:lm1proxIneq} we obtain 
		\begin{multline}
			(\prox_{r_1\tilde\phi}(u+r_1 d)-(u+r_1 d), \prox_{r_2 \tilde\phi}(u+r_2 d)-\prox_{r_1\tilde\phi}(u+r_1d)) \\ + \frac{1}{2} \|\prox_{r_1\tilde\phi}(u+r_1d) - \prox_{r_2 \tilde\phi}(u+r_2 d)\|^2 \\
			\ge r_1 \tilde\phi(\prox_{r_1\tilde\phi}(u+r_1d)) - r_1 \tilde\phi(\prox_{r_2 \tilde\phi}(u+r_2 d)).
		\end{multline}
		Let us define 
		\[
		p_{u,d}(r) \coloneq \prox_{r\tilde\phi}(u+rd)-u.
		\]
		Then, this can be written as
		\begin{multline*}
			(p_{u,d}(r_1)-r_1 d, p_{u,d}(r_2)-p_{u,d}(r_1) ) + \frac12 \| p_{u,d}(r_1)-p_{u,d}(r_2)\|^2 \\
			\ge r_1 \tilde\phi(\prox_{r_1\tilde\phi}(u+r_1d)) - r_1 \tilde\phi(\prox_{r_2 \tilde\phi}(u+r_2 d)).
		\end{multline*}
		Interchanging the roles of $r_1$ and $r_2$ yields
		\begin{multline*}
			(p_{u,d}(r_2)-r_2 d, p_{u,d}(r_1)-p_{u,d}(r_2) ) + \frac12 \| p_{u,d}(r_2)-p_{u,d}(r_1)\|^2 \\
			\ge r_2 \tilde\phi(\prox_{r_2\tilde\phi}(u+r_2d)) - r_2 \tilde\phi(\prox_{r_1 \tilde\phi}(u+r_1 d)).
		\end{multline*}
		We add these two equations after dividing by $r_1$ and $r_2$, respectively, to obtain 
		\[
		\left(\frac1{r_1}p_{u,d}(r_1)-\frac{1}{r_2} p_{u,d}(r_2), p_{u,d}(r_2)-p_{u,d}(r_1)\right) + \left(\frac{1}{2r_1}+\frac1{2r_2}\right) \|p_{u,d}(r_1) - p_{u,d}(r_2)\|^2 \ge 0.
		\]
		Expanding the two terms and reordering gives
		\[
		\frac{r_2-r_1}{2r_1 r_2}\|p_{u,d}(r_1)\|^2 + \frac{r_1-r_2}{2r_1 r_2}\|p_{u,d}(r_2)\|^2 \le 0,
		\]
		so $\|p_{u,d}(r_2)\| \le \|p_{u,d}(r_1)\|$ and the first part is  proven. \\
		As done in \cite[Lemma 2]{240proxTrustRegion}, we set $u = u+rd$ and $v=u$ in Lemma \ref{lm:lm1proxIneq} to obtain
		\begin{multline*}
			(\prox_{r\tilde\phi}(u+rd)-(u+rd), u-\prox_{r\tilde\phi}(u+rd)) + \frac{1}{2} \|\prox_{r\tilde\phi}(u+rd) - u\|^2 \\
			\ge r \tilde\phi(\prox_{r\tilde\phi}(u+rd)) - r \tilde\phi(u).
		\end{multline*}
		Reordering leads to \eqref{eq:proxIneqLm2}.
	\end{proof}
	Note that the additional quadratic term in Lemma \ref{lm:lm1proxIneq} leads to the additional constant $\frac12$ on the right hand side of inequality \eqref{eq:proxIneqLm2} compared to \cite[Lemma 2, part 3]{240proxTrustRegion}.
	\begin{remark}
		In the convex case, the function $\Psi$ defined in the previous Lemma is nonincreasing. This does not hold in the nonconvex case, as $r\mapsto \prox_{r\tilde\phi}(u+rd)$ is not continuous anymore as in the convex case by \cite[Lemma 3]{240proxTrustRegion}. This can be seen for example with $d=0$ and $\tilde\phi$ being the $L^p$-pseudonorm around some $r>0$ where $\prox_{r\phi}(u)$ jumps from zero to some nonzero value as $r$ decreases, compare Lemma \ref{lm:sparsityProxLp} later on. 
	\end{remark}

	\subsection{Optimality conditions}\label{sec:optConds}
	In this section we investigate optimality conditions for problem \eqref{eq:minprob} which will later serve as a basis for the convergence analysis of the trust-region algorithm.
	\subsubsection{In \texorpdfstring{$U=\Ltwo$}{U=L2}}
	We start by considering necessary optimality conditions for problem \eqref{eq:minprob} using the Fréchet and limiting subdifferential for the case $U=\Ltwo$. \\
	\begin{definition}
		The Fréchet subdifferential of a function $g: U \to \bar\R$ at $\bar u$ is defined by
		\[
		\hat\partial g(\bar u) \coloneqq \left\{ \eta\in U^* | \liminf_{\|h\|\searrow 0} \frac{g(\bar u + h)-g(\bar u) - (\eta,h)}{\|h\|} \ge 0 \right\}.
		\]
		The limiting subdifferential of $g$ is given by 
		\begin{multline*}
			\partial g(\bar u) \coloneqq \left\{ \eta \in U^* | \exists (u_k)_k \subset U, \exists (\eta_k)_k \subset U^* : \right. \\
			\left. u_k\to \bar u \text{ in } U, g(u_k) \to g(\bar u), \eta_k \rightharpoonup \eta  \text{ in } U^*, \eta_k\in \hat\partial g(u_k)\, \forall k\in \N \right\}.
		\end{multline*} 
	\end{definition} 
	In the unconstrained case $\Uad=U$ and under Assumption  \ref{ass:fPhi}, part \ref{ass:fPhifPart}, the sum rule applies to the limiting and Fréchet subdifferential by \cite[Theorem 3.36, Proposition 1.107]{genDiff60}. Thus, by Fermat's rule, a solution $\bar u$ of the minimization problem \eqref{eq:minprob} satisfies the necessary optimality conditions
	\begin{equation}\label{eq:subdiffInclusion}
		-\nabla f(\bar u) \in \hat\partial \phi(\bar u) \qquad \text{and} \qquad -\nabla f(\bar u) \in \partial \phi(\bar u).
	\end{equation}
	
	The inclusions \eqref{eq:subdiffInclusion} can be very weak. For example, at $\bar u=0$, both the Fréchet and limiting subdifferential of $\phi(u)=|u|^p$ with $p\in (0,1)$ are the whole space $\Ltwo$ as can be seen from the following lemma from \cite{38subdiffLp}.
	
	\begin{lemma}\cite[Theorem 4.6]{38subdiffLp} \label{lm:LpSubdiff}
		The Fréchet and limiting subdifferentials of $j: \Ltwo \to \R$, $j(u)\coloneq \io |u|^p \dd x$ for $p\in (0,1)$ are given by 
		\[
		\hat\partial j(u)  = \partial j(u) = \{ \eta \in L^2(\Omega): \eta = p|u|^{p-2}u  \text{ a.e. on } \{u\neq 0\}\}. 
		\]
	\end{lemma}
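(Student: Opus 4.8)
The plan is to reduce everything to a pointwise analysis of the scalar integrand $g(t)\coloneqq|t|^p$ and then lift the pointwise picture to the integral functional $j$, settling the Fréchet subdifferential first and deducing the limiting one from it. For the scalar function, $g$ is differentiable at every $t_0\neq0$ with $g'(t_0)=p|t_0|^{p-2}t_0$, so $\hat\partial g(t_0)=\{p|t_0|^{p-2}t_0\}$; at $t_0=0$ one computes $\bigl(|t|^p-st\bigr)/|t|=|t|^{p-1}-s\,\sign(t)\to+\infty$ as $t\to0$ because $p-1<0$, so every $s\in\R$ is admissible and $\hat\partial g(0)=\R$. This already explains the asserted form: a candidate subgradient must equal $p|u|^{p-2}u$ exactly where $u\neq0$ and is unconstrained where $u=0$.

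For the inclusion $\hat\partial j(u)\subseteq\{\eta:\eta=p|u|^{p-2}u\text{ on }\{u\neq0\}\}$ I would test $\eta\in\hat\partial j(u)$ against perturbations supported on $E_n\coloneqq\{1/n\le|u|\le n\}$. On such a set $g$ is $C^2$ with $|g''|$ bounded by a constant $C_n$, so for $h$ supported in $E_n$ a second‑order Taylor estimate gives $\bigl|j(u+h)-j(u)-\io g'(u)h\,\dd x\bigr|\le\frac{C_n}{2}\|h\|^2$. Inserting $h=t\,(\eta-g'(u))\mathbf{1}_{E_n}$ (suitably truncated to lie in $\Ltwo\cap L^\infty$) into the Fréchet inequality and letting $t\downarrow0$ forces $\eta=g'(u)=p|u|^{p-2}u$ a.e.\ on $E_n$; taking the union over $n$ gives the identity on $\{u\neq0\}=\bigcup_nE_n$, while the test steps never constrain $\{u=0\}$.

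The reverse inclusion is the main obstacle. Given $\eta\in\Ltwo$ with $\eta=p|u|^{p-2}u$ on $\{u\neq0\}$ (which forces $\int_{\{u\neq0\}}|u|^{2p-2}\dd x<\infty$), I must show $\liminf_{\|h\|\to0}\bigl(j(u+h)-j(u)-(\eta,h)\bigr)/\|h\|\ge0$, i.e.\ $\io\psi(x,h)\,\dd x\ge-\varepsilon\|h\|$ for $\|h\|$ small, where $\psi(x,t)\coloneqq g(u+t)-g(u)-\eta t$. I would decompose $\Omega$ according to the size of $h$. Where $|h|$ is large, say $|h|>\tau$, Chebyshev's inequality bounds the measure by $\|h\|^2/\tau^2$, and the crude estimate $\psi\ge-|u|^p-|\eta|\,|h|$ together with Hölder's inequality and absolute continuity of the integral makes this contribution $o(\|h\|)$ for fixed $\tau$. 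On the bulk $\{|h|\le|u|/2\}$ a second‑order Taylor estimate gives $\psi\ge-C|u|^{p-2}h^2$, and Cauchy–Schwarz against the integrability $\int_{\{u\neq0\}}|u|^{2p-2}\dd x<\infty$ (split once more at $|u|=\lambda$) controls it by $\tfrac\varepsilon2\|h\|+o(\|h\|)$. The delicate region is where $|h|\le\tau$ but $|h|>|u|/2$, i.e.\ $u$ and $h$ are both small with $|h|$ dominating $|u|$ (this includes $\{u=0\}$); here one must retain the nonnegative term $|u+h|^p$ and exploit the sublinearity $|h|^p\ge\tau^{p-1}|h|$ valid for $|h|\le\tau$, letting the nonnegative $|h|^p$ absorb the linear term $\eta h$ up to a remainder that is small because $\|\eta\|_{L^2(\{|\eta|>\tau^{p-1}\})}\to0$ and the region shrinks as $h\to0$. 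Choosing $\tau$ and $\lambda$ small (depending on $\varepsilon$) and then $\|h\|$ small assembles the bound; the $L^2$‑membership of $p|u|^{p-2}u$ is exactly what makes this region manageable.

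Finally, for the limiting subdifferential, the inclusion $\hat\partial j(u)\subseteq\partial j(u)$ always holds, so it remains to prove the reverse. If $\eta\in\partial j(u)$ with witnesses $u_k\to u$ in $\Ltwo$, $\eta_k\rightharpoonup\eta$ and $\eta_k\in\hat\partial j(u_k)$, then $\eta_k=p|u_k|^{p-2}u_k$ on $\{u_k\neq0\}$. Passing to a subsequence with $u_k\to u$ a.e., on $\{u\neq0\}$ one has $u_k\neq0$ eventually and hence $\eta_k\to p|u|^{p-2}u$ a.e.\ by continuity of $v\mapsto p|v|^{p-2}v$ away from $0$. Since $\{\eta_k\}$ is bounded in $\Ltwo$ and converges a.e.\ on $\{u\neq0\}$, its weak limit must coincide with the a.e.\ limit there (test against bounded functions on finite‑measure subsets and use Egorov), so $\eta=p|u|^{p-2}u$ a.e.\ on $\{u\neq0\}$ with no constraint on $\{u=0\}$. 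This yields $\partial j(u)\subseteq\hat\partial j(u)$ and therefore $\partial j(u)=\hat\partial j(u)$, completing the argument.
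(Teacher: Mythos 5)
Your proof is correct and follows essentially the same route as the reference the paper cites for this lemma~\cite{38subdiffLp} (the paper itself states the result without proof), whose techniques the paper reproduces when extending it to box constraints in Lemmas~\ref{lm:FrechetSubdiffSumrule} and~\ref{lm:LpSubdiffIndicator}: perturbations supported where $|u|$ is bounded away from zero for the inclusion ``$\subseteq$'', a splitting of $\Omega$ into $\{u=0\}$ and $\{u\neq 0\}$ for ``$\supseteq$'' in which the $L^2$-integrability of $|u|^{p-1}$ on $\{u\neq0\}$ (equivalently $\eta\in\Ltwo$) does the decisive work, and the coincidence of weak and pointwise a.e.\ limits for the limiting subdifferential. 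The only cosmetic differences are that you obtain the first inclusion via a second-order Taylor bound with test direction $\eta-p|u|^{p-2}u$ rather than characteristic-function perturbations with dominated convergence, which changes nothing essential.
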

	
	Moreover, if $\phi$ is the sum of two functions that are not so-called sequentially normally epi-compact functions, then one can not apply the sum rule from \cite{genDiff60}. This is for example relevant if $\phi$ is the sum of the $L^p$-pseudo norm $\io |u|^p \dd x$ for $p\in (0,1)$ and the indicator function corresponding to control constraints. \\
	However, for the special case of box constraints, one can derive a sum rule as already suggested in \cite{38subdiffLp}.
	This fact explains definition \eqref{def:Uad} of the admissible set $\Uad$ in Assumption \ref{ass:fPhi}. \\
	By definition of the convex subdifferential, it holds 
	\begin{align*}
		\partial \Iuad (\bar u)  &= \{v\in \Ltwo: 0 \ge (v, w-\bar u)_{\Ltwo} \quad \forall w \in \Uad\} \\
		&= \{ v \in \Ltwo: v \ge 0 \,\text{ a.e. on } \{\bar u = b\}, v \le 0 \, \text{ a.e. on } \{\bar u = a\}, v=0 \text{ elsewhere}\}.
	\end{align*}
	The Fréchet and limiting subdifferentials coincide with the convex subdifferential for convex functions, see e.g. \cite[page 83, 95]{genDiff60}.
	\begin{lemma}\label{lm:FrechetSubdiffSumrule}
		It holds $\hat \partial (j(\bar u) +I_{\Uad}(\bar u) ) = \hat \partial j(\bar u) + \hat \partial \Iuad(\bar u) $ for all $\bar u\in \Uad$. I.e.,
		\[
		\hat\partial (j(\bar u) +I_{\Uad}(\bar u) ) = \left\{ \eta \in \Ltwo: \eta \begin{cases} \ge p|\bar u|^{p-2}\bar u &  \text{a.e. on } \{\bar u = b\}, \\
			\le p|\bar u|^{p-2}\bar u & \text{a.e. on } \{\bar u = a\}, \\    
			= p|\bar u|^{p-2}\bar u & \text{a.e. elsewhere on  } \{\bar u \neq 0\}
		\end{cases}   \right\}.
		\]        
	\end{lemma}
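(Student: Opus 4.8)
The plan is to prove the claimed identity by combining the known descriptions of the two Fréchet subdifferentials with a direct computation of $\hat\partial(j + \Iuad)(\bar u)$ from the definition. I would split the argument into three parts: (i) an explicit pointwise description of the candidate set $\hat\partial j(\bar u) + \hat\partial \Iuad(\bar u)$, (ii) the ``easy'' inclusion $\hat\partial j(\bar u) + \hat\partial \Iuad(\bar u) \subseteq \hat\partial(j + \Iuad)(\bar u)$, and (iii) the reverse inclusion, which is the heart of the lemma.

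For (i), I would simply add the two sets. By Lemma \ref{lm:LpSubdiff}, every $\eta_1 \in \hat\partial j(\bar u)$ equals $p|\bar u|^{p-2}\bar u$ a.e.\ on $\{\bar u \ne 0\}$ and is unconstrained on $\{\bar u = 0\}$, while the displayed formula for $\partial\Iuad(\bar u) = \hat\partial\Iuad(\bar u)$ says $v \ge 0$ on $\{\bar u = b\}$, $v \le 0$ on $\{\bar u = a\}$ and $v = 0$ elsewhere. Writing $\eta = \eta_1 + v$ and distinguishing the sets $\{a < \bar u < b\}$, $\{\bar u = a\}$, $\{\bar u = b\}$ (each intersected with $\{\bar u \ne 0\}$ and its complement) reproduces exactly the three-case description in the statement, with $\{\bar u = 0\}$ remaining unconstrained. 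For (ii), I would use the general fact that the Fréchet subdifferential is superadditive: if $\eta_1 \in \hat\partial j(\bar u)$ and $v \in \hat\partial\Iuad(\bar u)$, then the $\liminf_{\|h\|\searrow 0}$ of the sum of the two difference quotients is bounded below by the sum of the two individual $\liminf$'s, both of which are $\ge 0$; since $j(\bar u)<\infty$ and $\Iuad(\bar u) = 0$, this yields $\eta_1 + v \in \hat\partial(j+\Iuad)(\bar u)$.

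The main work is the reverse inclusion (iii). Fix $\eta \in \hat\partial(j+\Iuad)(\bar u)$; since $\bar u \in \Uad$, the defining inequality reduces to the condition that the $\liminf$ of $\|h\|^{-1}\big(j(\bar u + h) - j(\bar u) - (\eta,h)\big)$ over feasible $h\to 0$ be $\ge 0$. I would test this with localized perturbations $h = \pm t\,\chi_E$ for small $t > 0$ and measurable $E$ chosen inside each relevant region. On the interior $\{a < \bar u < b\} \cap \{\bar u \ne 0\}$ both signs are admissible for small $t$, and letting $t \searrow 0$ the quotient $t^{-1}(|\bar u \pm t|^p - |\bar u|^p)$ converges to $\pm p|\bar u|^{p-2}\bar u$; combining the two signs forces $\int_E \eta = \int_E p|\bar u|^{p-2}\bar u$ for all admissible $E$, hence $\eta = p|\bar u|^{p-2}\bar u$ a.e.\ there. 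On $\{\bar u = b\}$ only $h = -t\chi_E$ keeps $\bar u + h$ feasible, and passing to the limit gives $\int_E(\eta - p|\bar u|^{p-2}\bar u) \ge 0$, i.e.\ $\eta \ge p|\bar u|^{p-2}\bar u$; the set $\{\bar u = a\}$ is symmetric and yields the reverse inequality. Finally, on $\{\bar u = 0\}$ the quotient behaves like $|E|^{1/2} t^{p-1} \to +\infty$ because $p-1 < 0$, so no condition on $\eta$ is produced, consistent with the characterization.

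I expect the delicate points --- and the main obstacle --- to lie entirely in making these limit passages under the integral rigorous. Two issues arise: feasibility of the test directions must hold uniformly, which I would handle by first localizing to $\{b - a \ge \delta\}$ and taking $t < \delta$; and the pointwise limit $t^{-1}(|\bar u \pm t|^p - |\bar u|^p) \to \pm p|\bar u|^{p-2}\bar u$ must be upgraded to convergence of the integrals even though $p|\bar u|^{p-2}\bar u$ blows up as $\bar u \to 0$. I would resolve the latter by additionally restricting $E$ to $\{\delta \le |\bar u| \le M\}$, where $s \mapsto |s|^p$ is $C^1$ with difference quotients bounded uniformly in $t$, so that dominated convergence applies, and then exhausting the regions by letting $\delta \searrow 0$ and $M \nearrow \infty$. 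Once these inclusions are established, combining (i), (ii) and (iii) gives both the equality $\hat\partial(j+\Iuad)(\bar u) = \hat\partial j(\bar u) + \hat\partial\Iuad(\bar u)$ and the explicit three-case description.
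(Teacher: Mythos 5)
Your overall architecture is sound, and most of it matches the paper. Your part (iii) is essentially the paper's own proof of the inclusion $\hat\partial(j+\Iuad)(\bar u)\subseteq$ explicit set: the paper likewise tests with characteristic-function perturbations $h_k=\pm\frac1k\chi_B$, passes to the limit by dominated convergence, and exhausts in the localization parameter; the only cosmetic difference is that it works on two overlapping one-sided regions $A_\epsilon^-=\{|\bar u|>\epsilon\}\cap\{\bar u<b-\epsilon\}$ (positive perturbations) and $A_\epsilon^+=\{|\bar u|>\epsilon\}\cap\{\bar u>a+\epsilon\}$ (negative perturbations) and intersects the two one-sided inequalities on the interior, rather than using two-sided perturbations there. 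Where you genuinely depart from the paper is the other inclusion. The paper proves ``explicit set $\subseteq\hat\partial(j+\Iuad)(\bar u)$'' directly from the definition, splitting the difference quotient into the part on $\{\bar u=0\}$ and the part on $\{\bar u\neq0\}$, citing \cite{38subdiffLp} for those two pieces and handling the constraint multipliers by a sign argument ($h_k\ge0$ a.e.\ on $\{\bar u=a\}$, $h_k\le0$ a.e.\ on $\{\bar u=b\}$ for feasible $h_k$). You instead decompose $\eta=\eta_j+v$ and invoke superadditivity of the Fréchet subdifferential together with Lemma \ref{lm:LpSubdiff} and the normal-cone formula; that is a cleaner and perfectly valid route, but it leans on your part (i), i.e.\ on the decomposition producing \emph{two $L^2$ functions}. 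This requires $p|\bar u|^{p-2}\bar u\,\chi_{\{\bar u\neq0\}}\in\Ltwo$, which follows from $\eta\in\Ltwo$ and the stated inequalities in configurations like $a<0<b$ (where $|p|\bar u|^{p-2}\bar u|\le|\eta|$ on the active sets), but not in degenerate sign configurations; the paper's pointwise decomposition with merely measurable $\eta_1,\eta_2\ge0$ glosses over the same integrability issue, so this is a shared caveat rather than a defect of your approach.

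One concrete slip to fix in (iii): localizing to $\{b-a\ge\delta\}$ and taking $t<\delta$ does \emph{not} make both perturbations $\pm t\chi_E$ feasible on the interior region $\{a<\bar u<b\}$, since $\bar u$ can be arbitrarily close to $b$ (or $a$) there while remaining strictly between the bounds. That localization is adequate for the boundary sets $\{\bar u=a\}$ and $\{\bar u=b\}$, where only one sign is used, but for the two-sided argument on the interior you must restrict to $\{a+\delta\le\bar u\le b-\delta\}$ (equivalently, argue as the paper does on the one-sided regions $A_\epsilon^{\mp}$) and then let $\delta\searrow0$. With that correction the argument closes; also, your cap $|\bar u|\le M$ is unnecessary, since on $\{|\bar u|\ge\delta\}$ the difference quotients $t^{-1}\bigl(|\bar u\pm t|^p-|\bar u|^p\bigr)$ are already uniformly bounded for $t<\delta/2$, so dominated convergence applies on sets of finite measure without truncating above.
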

	
	\begin{proof}
		The proof is an extension of some proofs in \cite{38subdiffLp}. \\
		We start with the inclusion "$\subseteq$" following \cite[Lemma 4.1]{38subdiffLp}. Let $\eta \in \hat \partial (j + \Iuad)(\bar u)$, let $\epsilon>0$ and define the set $A_{\epsilon}^- \coloneq \{|\bar u|>\epsilon\} \cap \{\bar u<b-\epsilon\}$. Let $B^-\subset A_{\epsilon}^-$ be a measurable set of positive measure and define $(h_k)_k \subset L^2(\Omega)$ as $h_k \coloneq \frac1{k}\chi_{B^-}$ for all $k\in \N$. Thus, it holds $\|h_k\| \searrow 0$ and for $k$ large enough one has $\bar u + h_k \in \Uad$. 
		By definition of the Fréchet subdifferential we obtain
		\begin{multline*}
			0 \le |B^-|^{1/2} \liminf_{k\to\infty} \frac{(j+\Iuad)(\bar u + h_k) - (j+\Iuad)(\bar u)-\io \eta(x) h_k(x) \dd x}{\|h_k\|} \\
			=  |B^-|^{1/2} \liminf_{k\to\infty} \frac{j(\bar u + h_k) - j(\bar u)-\io \eta(x) h_k(x) \dd x}{\|h_k\|} \\
			= \liminf_{k \to \infty} \int_{B^-} \left( k \left( \left| \bar{u}(x) + \frac{1}{k} \right|^p - |\bar{u}(x)|^p \right) - \eta(x) \right)  \dd x = \int_{B^-} p |\bar{u}(x)|^{p-2} \bar{u}(x) - \eta(x) \, \dd x
		\end{multline*}
		via dominated convergence. 
		Using the set $A_{\epsilon}^+\coloneq \{|\bar u|>\epsilon\} \cap \{\bar u> a+\epsilon\}$, one can repeat the steps above with $\tilde h_k = - \frac1{k}\chi_{B^+(x)}$ for subsets $B^+ \subset A_{\epsilon}^+$ to obtain a reverse inequality. As the sets $B^+$ and $B^-$ were arbitrary, this shows $\eta \le p |\bar{u}|^{p-2} \bar{u}$ a.e. on $A_{\epsilon}^-$ and  $\eta \ge p |\bar{u}|^{p-2} \bar{u}$ a.e. on $A_{\epsilon}^+$. Using $ \{|\bar u|\neq 0 \} \cap \{\bar u<b\}= \cup_{\epsilon>0} A_{\epsilon}^-$ and $ \{|\bar u|\neq 0 \} \cap \{\bar u>a\}= \cup_{\epsilon>0} A_{\epsilon}^+$ and combining both inequalities on $\{|\bar u|\neq 0 \} \cap \{a<\bar u<b\}$ yields the claim. \\
		For the reverse inclusion "$\supseteq$", we proceed similarly as in \cite[Theorem 4.4]{38subdiffLp}. Let $\eta \in L^2(\Omega)$ with $\eta = p|\bar u|^{p-2}\bar u \chi_{ \{\bar u \neq 0\}} + \eta_1 \chi_{\{\bar u = b\}} - \eta_2 \chi_{\{\bar u = a\}}$ with functions $0\le\eta_1,\eta_2$. Let $(h_k)_k\subset L^2(\Omega)$ with $\|h_k\| \searrow 0$. We want to show
		\[
		\liminf_{k\to\infty} \frac{(j+\Iuad)(\bar u + h_k) - (j+\Iuad)(\bar u) - \io \eta h_k \dd x}{\|h_k\|} \ge 0.
		\] 
		If $\bar u + h_k \notin \Uad$ for all $k$, the statement clearly holds as $\bar u \in \Uad$, so we can assume w.l.o.g. that $\bar u + h_k \in \Iuad$ for $k$ large enough. Thus, we have to show
		\begin{multline*}
			0\le \liminf_{k\to\infty} \frac{j(\bar u + h_k) - j(\bar u) - \io \eta h_k \dd x}{\|h_k\|} \\
			= \liminf_{k\to\infty} \frac{\int_{ \{\bar u = 0\}}(|h_k(x)|^p-\eta(x) h_k(x)) \dd x}{\|h_k\|} + \frac{\int_{\{\bar u \neq 0\}}(|\bar u(x) + h_k(x))|^p - |\bar u(x)|^p - \eta(x) h_k(x) \dd x}{\|h_k\|}    \\
			\eqqcolon \liminf_{k\to\infty} D_k^1 + D_k^2.
		\end{multline*}
		As $D_k^1$ coincides with the unconstrained case from \cite[Theorem 4.4]{38subdiffLp}, it holds $D_k^1\ge 0$ and it  remains to show $D_k^2\ge 0$.
		Arguing as in \cite[Lemma 4.2]{38subdiffLp}, we obtain
		\[
		\liminf_{k\to \infty } \tilde D_k^2 \coloneqq \liminf_{k \to \infty} \frac{\int_{\{ \bar{u} \neq 0 \}} \left( |\bar{u}(x) + h_k(x)|^p - |\bar{u}(x)|^p - p |\bar{u}(x)|^{p-2} \bar{u}(x) h_k(x) \right) \dd x}{\| h_k \|} \geq 0.	
		\]
		Since $\bar u + h_k \in \Uad$, it holds $h_k \ge 0$ a.e. on $\{\bar u = a\}$ and $h_k \le 0$ a.e. on $\{\bar u = b\}$. Thus, the remaining terms in $D_k^2 - \tilde D_k^2$ are also non-negative, which finishes the proof.
	\end{proof}
	
	\begin{lemma}\label{lm:LpSubdiffIndicator}
		For all $u\in \Uad$, it holds 
		\begin{multline*}
			\partial ((j(u) +I_{\Uad}(u) ) ) = \hat \partial (j(u) +I_{\Uad}(u) ) 
			= \hat\partial j(u) + \hat\partial I_{\Uad}(u) = \partial j(u) +\partial I_{\Uad}(u)  \\
			= \left\{ \eta \in \Ltwo: \eta \begin{cases} \ge p|\bar u|^{p-2}\bar u &  \text{a.e. on } \{\bar u = b\}, \\
				\le p|\bar u|^{p-2}\bar u & \text{a.e. on } \{\bar u = a\}, \\    
				= p|\bar u|^{p-2}\bar u & \text{a.e. elsewhere on  } \{\bar u \neq 0\}.
			\end{cases}   \right\}
		\end{multline*}
		
	\end{lemma}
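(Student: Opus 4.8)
The equalities in the statement, except one, follow by combining earlier results. Lemma~\ref{lm:FrechetSubdiffSumrule} gives $\hat\partial(j+\Iuad)(\bar u) = \hat\partial j(\bar u) + \hat\partial\Iuad(\bar u)$ together with the explicit description on the right-hand side. Since $\Iuad$ is the indicator of a convex set, it is convex and lower semicontinuous, so its Fréchet, limiting and convex subdifferentials coincide by \cite{genDiff60}, whence $\hat\partial\Iuad(\bar u) = \partial\Iuad(\bar u)$; and by Lemma~\ref{lm:LpSubdiff} we have $\hat\partial j(\bar u) = \partial j(\bar u)$. Consequently $\hat\partial j(\bar u) + \hat\partial\Iuad(\bar u) = \partial j(\bar u) + \partial\Iuad(\bar u)$, which settles every equality in the claim apart from $\partial(j+\Iuad)(\bar u) = \hat\partial(j+\Iuad)(\bar u)$. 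As the inclusion $\hat\partial g(\bar u)\subseteq\partial g(\bar u)$ holds for any proper lower semicontinuous $g$, the only thing left to prove is
\[
\partial(j+\Iuad)(\bar u) \subseteq \hat\partial(j+\Iuad)(\bar u) \qquad \text{for all } \bar u\in\Uad,
\]
which I would establish directly from the sequential definition of the limiting subdifferential, since the general limiting sum rule of \cite{genDiff60} is unavailable here (the summands need not be sequentially normally epi-compact).

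Take $\eta\in\partial(j+\Iuad)(\bar u)$ and let $u_k\to\bar u$ in $\Ltwo$, $(j+\Iuad)(u_k)\to(j+\Iuad)(\bar u)$ and $\eta_k\rightharpoonup\eta$ in $\Ltwo$ with $\eta_k\in\hat\partial(j+\Iuad)(u_k)$ be the sequences furnished by the definition. Passing to a subsequence I may assume $u_k\to\bar u$ a.e.; convergence of the functional values forces $u_k\in\Uad$, hence $a\le u_k\le b$, for $k$ large. By Lemma~\ref{lm:FrechetSubdiffSumrule}, each $\eta_k$ satisfies $\eta_k = p|u_k|^{p-2}u_k$ a.e. on $\{a<u_k<b\}\cap\{u_k\neq0\}$, $\eta_k\ge p|u_k|^{p-2}u_k$ a.e. on $\{u_k=b\}$ and $\eta_k\le p|u_k|^{p-2}u_k$ a.e. on $\{u_k=a\}$. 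The goal is to verify that the weak limit $\eta$ obeys the corresponding relations at $\bar u$, which I would check separately on $\{a<\bar u<b\}\cap\{\bar u\neq0\}$, $\{\bar u=b\}$, $\{\bar u=a\}$ and $\{\bar u=0\}$. On the last set the description of Lemma~\ref{lm:FrechetSubdiffSumrule} imposes no condition, so nothing is required there (reflecting that $\partial j(0)$ is all of $\Ltwo$).

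The heart of the argument is the passage to the weak limit on the three remaining sets. On $\{a<\bar u<b\}\cap\{\bar u\neq0\}$, a.e.\ convergence $u_k\to\bar u\neq0$ gives, for a.e.\ such $x$ and large $k$, that $x\in\{a<u_k<b\}\cap\{u_k\neq0\}$, whence $\eta_k\to p|\bar u|^{p-2}\bar u$ pointwise a.e.\ by continuity of $t\mapsto p|t|^{p-2}t$ away from the origin; since $\{\eta_k\}$ is bounded in $\Ltwo$ (weakly convergent sequences are bounded), an Egorov argument identifies the pointwise and weak limits and yields $\eta = p|\bar u|^{p-2}\bar u$ there. On $\{\bar u=b\}$ one uses $u_k\le b$ and $u_k\to b\neq0$ to see that, pointwise and for large $k$, either $u_k=b$, giving $\eta_k\ge p|b|^{p-2}b$, or $u_k$ lies strictly inside, giving $\eta_k=p|u_k|^{p-2}u_k\to p|b|^{p-2}b$; in both cases $\liminf_k\eta_k\ge p|b|^{p-2}b$ a.e. Working on $\{|b|\ge\delta\}$ (where $p|\bar u|^{p-2}\bar u\in\Ltwo$) and setting $w_k\coloneq\eta_k - p|\bar u|^{p-2}\bar u$, this means $w_k^-\to0$ a.e., and $\Ltwo$-boundedness upgrades this to $w_k^-\rightharpoonup0$; then $w_k+w_k^-=w_k^+\ge0$ converges weakly to $\eta - p|\bar u|^{p-2}\bar u$, and nonnegativity of weak limits of nonnegative functions gives $\eta\ge p|\bar u|^{p-2}\bar u$ a.e., first on $\{|b|\ge\delta\}$ and then on all of $\{\bar u=b\}\cap\{b\neq0\}$ upon letting $\delta\downarrow0$. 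The set $\{\bar u=a\}$ is handled symmetrically.

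Combining the four cases shows $\eta$ lies in the set of Lemma~\ref{lm:FrechetSubdiffSumrule}, i.e.\ $\eta\in\hat\partial(j+\Iuad)(\bar u)$, which completes the proof. I expect the main obstacle to be exactly this bridge between the a.e.\ (liminf) relations satisfied by the $\eta_k$ and the weak-$\Ltwo$ limit $\eta$ — the Egorov/Fatou-type reasoning converting pointwise information into a statement about the weak limit — together with the bookkeeping on the degenerate parts of $\{\bar u=a\}$ and $\{\bar u=b\}$ where $a$ or $b$ vanishes, which merge into $\{\bar u=0\}$ and therefore carry no constraint.
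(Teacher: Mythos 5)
Your proof is correct and follows essentially the same route as the paper's: reduce everything to the single inclusion $\partial(j+\Iuad)(\bar u)\subseteq\hat\partial(j+\Iuad)(\bar u)$ via Lemma~\ref{lm:FrechetSubdiffSumrule}, Lemma~\ref{lm:LpSubdiff} and convexity of $\Iuad$, then work with the defining sequences of the limiting subdifferential, extract a pointwise a.e.\ convergent subsequence with $u_k\in\Uad$, and split into the cases $\{a<\bar u<b\}\cap\{\bar u\neq0\}$, $\{\bar u=a\}$, $\{\bar u=b\}$ and $\{\bar u=0\}$. Your Egorov and positive/negative-part arguments for passing from the pointwise (liminf) bounds on $\eta_k$ to the weak limit $\eta$ are simply a more explicit rendering of what the paper compresses into ``weak and pointwise limits coincide'' and ``Fatou's lemma'' (and they implicitly correct the sign slips in the paper's treatment of the $\{\bar u=a\}$ case).
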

	\begin{proof}
		Note that by \cite[Theorem 4.6]{38subdiffLp} and convexity of $\Iuad$ it holds $\partial j(u) +\partial I_{\Uad}(u) = \hat\partial j(u) + \hat\partial I_{\Uad}(u) $. By Lemma \ref{lm:FrechetSubdiffSumrule} it remains to prove the first equality. 
		We follow \cite[Theorem 4.6]{38subdiffLp} and note that the inclusion "$\supseteq$" follows directly from the definition of the limiting and Fréchet subdifferentials. 
		For "$\subseteq$", we proceed similarly as in \cite[Theorem 3.7]{38subdiffLp}. Let $\eta \in \partial(j+\Iuad)(\bar u)$ and let $(u_k)_k \subset L^2(\Omega)$, $(\eta_k)_k \subset L^2(\Omega)$ with $u_k \to \bar u$ in $L^2(\Omega)$, $(j+\Iuad)(u_k) \to (j+\Iuad)(\bar u)$, $\eta_k \rightharpoonup \eta$ in $L^2(\Omega)$ and $\eta_k \in \hat \partial (j+\Iuad)(u_k)$. After possibly extracting a subsequence, we can assume that $u_k(x)\to \bar u(x)$ converges pointwise a.e. on $\Omega$. Furthermore, by $(j+\Iuad)(u_k) \to (j+\Iuad)(\bar u)$ we can also assume $u_k \in \Uad$ for all $k$.
		Let $x\in \{ \bar u \neq 0\} \cap \{ \bar u \in (a,b)\}$. Then $x\in \{ u_k \neq 0\} \cap \{ u_k \in (a,b)\}$ for $k$ large enough.
		Thus, it holds $\eta_k = p|u_k|^{p-2}u_k$ which implies $\eta_k(x) \to p |\bar u|^{p-2}\bar u$ a.e. on $\{ \bar u \neq 0\} \cap \{ \bar u \in (a,b)\}$. 
		Since weak and pointwise limits coincide, we obtain $\eta = p |\bar u|^{p-2}\bar u$ a.e. on $\{ \bar u \neq 0\} \cap \{ \bar u \in (a,b)\}$. \\
		Let now $x\in \{ \bar u = a\}$. Let us split $(u_k(x))$ into two subsequences with $(u_{k_n^1}(x))_{k_n^1} < a$ and $(u_{k_n^2}(x))_{k_n^2} = a$. For the corresponding subsequence $\eta_{k_n^1}$, it holds $\eta_{k_n^1}(x) = p|u_{k_n^1}(x)|^{p-2}u_{k_n^1}(x) \to p|\bar u(x)|^{p-2}\bar u(x) = p|a(x)|^{p-2}a(x)$, analogously to the case above. For the second subsequence we have $\eta_{k_n^2}(x) \ge p |a(x)|^{p-2}a(x)$. Combining these two estimates and passing to the limit inferior yields $\liminf_{k\to \infty} \eta_k(x) \ge p |a(x)|^{p-2}a(x)$, so $\eta \ge p|\bar u|^{p-2}\bar u$ a.e. on $x\in \{ \bar u = a\}$ using Fatou's lemma. 
		One can proceed similarly to obtain the corresponding inequality on $x\in \{ \bar u = b\}$.
	\end{proof}
	The previous lemmas will be useful later on when proving criticality results of the trust-region algorithm.\\
	
	A stronger optimality condition is the so-called $L$-stationarity (see e.g. \cite{15proxNonsmooth, sparsityConstrainedOpt308, introNonlinOpt309})  which is satisfied if  
	\begin{equation}\label{eq:Lstationary}
		\bar u \in \prox_{1/L \phi}\left(\bar u - \frac1{L} \nabla f(\bar u)\right).
	\end{equation}
	For convex and lower semicontinuous functions $\phi$, $L$-stationarity and \eqref{eq:subdiffInclusion} are equivalent, but in the nonconvex case the inclusion \eqref{eq:Lstationary} is stronger, see e.g. \cite[Introduction]{15proxNonsmooth}. \\
	In \cite[Theorem 2.1]{15proxNonsmooth} it was shown for $a=-b$ in $\Uad$ being constant that local solutions $\bar u \in \Linf$ of \eqref{eq:minprob} with $f(u)-f(\bar u) = \nabla f(\bar u)\cdot (u-\bar u) +o(\|u-\bar u\|)$ satisfy the Pontryagin maximum principle. The Pontryagin maximum principle also implies $L$-stationarity.
	
	\subsubsection{In \texorpdfstring{$U=\Hone$}{U=H01}}
	Next, we consider problem \eqref{eq:minprob} in $\Hone$. We first note that since $0<p<1$ and $\Omega$ is bounded, we obtain compactness of the embedding from $\Hone$ into $L^p(\Omega)$ by the Rellich-Kondrachov theorem or via Morrey's inequality for all dimensions $d$. Due to this compact embedding, the $L^p$-functional is now weakly lower semicontinuous in this case. This guarantees existence of solutions by standard methods, see for example \cite{lp_cont, sparse}.\\  
	A necessary optimality condition for this problem was derived in \cite[section 5]{sparse}.
	\begin{theorem}\cite[Theorem 5.7]{sparse}\label{tm:ocH1}
		Let $\bar u$ be a local solution of \eqref{eq:minprob} with  $\alpha>0$. Then there exists $\bar \lambda \in \Hone^*$ such that
		\[
		\alpha (\bar u,v)_{\Hone} + \beta \braket{\bar\lambda,v}_{\Hone} = -f'(\bar u) v \qquad \forall v\in \Hone.
		\]
		with 
		\[
		\braket{\bar \lambda,\bar u} = p \io |\bar u|^p \dd x.
		\]
	\end{theorem}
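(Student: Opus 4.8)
The plan is to isolate the nonsmooth, nonconvex $L^p$-term and treat it by a scaling argument, while handling the smooth part $f+\frac{\alpha}{2}\|\cdot\|_{\Hone}^2$ by ordinary differentiation. Recall that in the unconstrained $\Hone$-case $F=f+\phi$ with $\phi(u)=\beta\io|u|^p\dd x+\frac{\alpha}{2}\|u\|_{\Hone}^2$. First I would simply \emph{define} $\bar\lambda\in\Hone^*$ through the Euler--Lagrange residual,
\[
\beta\braket{\bar\lambda,v} \coloneqq -f'(\bar u)v-\alpha(\bar u,v)_{\Hone}\qquad\forall v\in\Hone .
\]
The right-hand side is a bounded linear functional of $v$ (since $f'(\bar u)\in\Hone^*$ and $\bar u\in\Hone$), so $\bar\lambda$ is a well-defined element of $\Hone^*$ and the first displayed identity of the theorem holds by construction. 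Consequently the entire content of the statement is concentrated in the complementarity relation $\braket{\bar\lambda,\bar u}=p\io|\bar u|^p\dd x$, and it suffices to verify that this particular $\bar\lambda$ satisfies it.

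The decisive step is a radial (scaling) variation. Set $g(t)\coloneqq F\bigl((1+t)\bar u\bigr)$ for $t$ in a neighborhood of $0$. Because $t\mapsto(1+t)\bar u$ is continuous into $\Hone$ and $\bar u$ is a local minimizer of $F$, the point $t=0$ is a local minimizer of the scalar function $g$. I claim $g$ is differentiable at $0$ with $g'(0)=0$. The smooth contributions are routine: by the chain rule $\frac{d}{dt}f((1+t)\bar u)\big|_{t=0}=f'(\bar u)\bar u$, where $f$ is differentiable along the whole ray because $\dom\phi=\Hone$ on the bounded domain (the $L^p$-integral is finite for every $\Hone$-function), so by Assumption \ref{ass:fPhi} the ray stays in the set $W$; and $\frac{d}{dt}\frac{\alpha}{2}(1+t)^2\|\bar u\|_{\Hone}^2\big|_{t=0}=\alpha\|\bar u\|_{\Hone}^2$. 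The crucial observation is that the $L^p$-term scales exactly: for $1+t>0$,
\[
\io|(1+t)\bar u|^p\dd x=(1+t)^p\io|\bar u|^p\dd x ,
\]
which is differentiable in $t$ with derivative $p\io|\bar u|^p\dd x$ at $t=0$. This is precisely where the nonsmoothness of $s\mapsto|s|^p$ at the origin is circumvented: scaling does not move the zero set $\{\bar u=0\}$, so the integrand stays pointwise smooth in $t$ (identically zero on $\{\bar u=0\}$ and a smooth power on $\{\bar u\neq 0\}$), and differentiation under the integral is justified by dominated convergence.

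Putting these together gives $g'(0)=f'(\bar u)\bar u+\alpha\|\bar u\|_{\Hone}^2+\beta p\io|\bar u|^p\dd x=0$. Comparing with the definition of $\bar\lambda$ evaluated at $v=\bar u$, namely $\beta\braket{\bar\lambda,\bar u}=-f'(\bar u)\bar u-\alpha\|\bar u\|_{\Hone}^2$, I obtain $\beta\braket{\bar\lambda,\bar u}=\beta p\io|\bar u|^p\dd x$, and dividing by $\beta>0$ yields the desired complementarity. The degenerate case $\bar u=0$ is trivial, both sides vanishing.

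The genuine obstacle is the interaction of local minimality with the non-differentiability and non-convexity of $|\cdot|^p$ at $0$: a naive directional-derivative test in an arbitrary direction $v\in\Hone$ fails, since the directional derivative of $j$ is $+\infty$ wherever $v\neq 0$ on $\{\bar u=0\}$. The radial direction $v=\bar u$ is special precisely because it respects the zero set and collapses $j$ into the smooth scalar map $(1+t)^p\io|\bar u|^p\dd x$; recognizing this is the heart of the argument. A more laborious alternative would mirror the $\Ltwo$-analysis of the preceding subsection, establishing an $\Hone$-subdifferential sum rule and identifying $\bar\lambda$ as a genuine subgradient by transporting the characterization of Lemma \ref{lm:LpSubdiff} from $\Ltwo$ to $\Hone$ through the compact embedding; the scaling trick bypasses exactly this delicate transport. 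The only remaining technical points are the legitimacy of differentiating $f((1+t)\bar u)$ and the $L^p$-integral at $t=0$, which follow from the $L$-smoothness of $f$ and from dominated convergence, respectively.
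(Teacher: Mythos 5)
Your proof is correct, and it takes a genuinely different route from the one the paper relies on. The paper does not prove Theorem \ref{tm:ocH1} itself; it cites \cite{sparse}, where the multiplier is constructed through the smoothing machinery that this paper also uses later: one works with $j_\epsilon$ and the smooth multipliers $\braket{\lambda_\epsilon,v}_{\Hone}=2\io\psi_{\epsilon}'(u^2)u\,v\dd x$, and the relation $\braket{\bar\lambda,\bar u}=p\io|\bar u|^p\dd x$ is obtained by passing to the limit $\epsilon\to 0$, exactly in the spirit of Lemma \ref{lm:auxLambdaPkConv} and the lemmas following it. You instead observe that, as literally stated, the theorem is equivalent to the single scalar identity $f'(\bar u)\bar u+\alpha\|\bar u\|_{\Hone}^2+\beta p\io|\bar u|^p\dd x=0$, because the first display determines $\bar\lambda$ uniquely once $\beta>0$; you then prove that identity by the radial variation $g(t)=F((1+t)\bar u)$, exploiting the exact $p$-homogeneity $\io|(1+t)\bar u|^p\dd x=(1+t)^p\io|\bar u|^p\dd x$. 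This is sound: for $|t|$ small the curve stays in the neighborhood of local optimality, $f$ is differentiable along it (here $\dom\phi=\Hone$, so $W=\Hone$), the $L^p$-term reduces to $(1+t)^p$ times the finite constant $\io|\bar u|^p\dd x$ (finite by $L^2(\Omega)\hookrightarrow L^p(\Omega)$ on the bounded $\Omega$), and $t=0$ is an interior local minimizer of a differentiable scalar function, so $g'(0)=0$. (Your appeal to dominated convergence is superfluous: once the scaling identity is written, you are differentiating a scalar power function, not an integral.) What each approach buys: yours is far shorter and completely elementary, bypassing the smoothing limit entirely; the construction in \cite{sparse} produces $\bar\lambda$ as a limit of the concrete smoothed multipliers, which endows it with interpretable structure (it acts like $p|\bar u|^{p-2}\bar u$ where $\bar u\neq 0$) and is precisely the form this paper needs in its $\Hone$ convergence analysis, where the algorithmic multipliers $\lambda_k$ have that shape. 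Your $\bar\lambda$ is, by construction, nothing but the Euler--Lagrange residual, so under your reading the theorem carries no information beyond the scalar identity --- a fair reading of the statement as quoted, but one that would not recover the stronger, structured multiplier established in \cite{sparse}.
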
	

	\subsection{Smoothing of \texorpdfstring{$L^p$}{Lp}-pseudonorm}
	In the trust-region algorithm, we will use a smooth approximation of the $L^p$-pseudo-norm that was introduced in \cite{lp_cont}.
	There, a smooth approximation of $t\mapsto t^{p/2}$ is defined via the function
	\[
	\psi_{\epsilon}(t) \coloneqq \begin{cases}
		\frac{p}{2}\frac{t}{\epsilon^{2-p}}+(1-\frac{p}{2})\epsilon^p & \textrm{if } t \in [0,\epsilon^2), \\ t^{p/2} & \textrm{if } t \geq \epsilon^2.
	\end{cases}
	\]
	Then, 
	\begin{equation}\label{eq:defTildePhiEps}
		j_{\epsilon}(u) \coloneqq \io \psi_{\epsilon}(u^2) \dd x \approx \io |u|^p \dd x
	\end{equation}
	is an approximation of the $L^p$-pseudonorm.
	
	The following results from \cite{sparse} will be useful later on.
	\begin{lemma}
		\label{lm:psieps} Let $p\in(0,2)$. Then $\psi_{\epsilon}: [0, \infty) \to [0,\infty)$
		has the following properties:
		\begin{enumerate}[label=(\roman*), series=psi]
			\item The function $\psi_{\epsilon}$ is concave. 
			\item $\epsilon \mapsto \psi_{\epsilon}(t)$ is monotonically increasing for all $t\geq 0$.
			\item Let $\epsilon>0$. Then $\psi$ is continuously differentiable with
			\[ \psi_{\epsilon}'(t)=\frac{p}{2}\min(\epsilon^{p-2},t^{\frac{p-2}{2}}).
			\] 
		\end{enumerate}
		Let now $p\in(0,1)$.
		\begin{enumerate}[label=(\roman*), resume=psi]
			\item For sequences $(u_k)$, $(\epsilon_k)$ such that $u_k \to u$ in $L^1(\Omega)$ and $\epsilon_k \to \epsilon \geq 0$ it holds $j_{\epsilon_k}(u_k) \to j_{\epsilon}(u)$.
			\item Let $\epsilon>0$. Then $u \mapsto G_{\epsilon}(u)$ is Fréchet differentiable from $L^2(\Omega)$ to $\R$ with \begin{align*}
				j_{\epsilon}'(u)h = \int_{\Omega} 2 u(x) \psi_{\epsilon}'(u(x)^2) h(x) \dd x.
			\end{align*}
			\item For $u_k, u\in \Ltwo$ and $\epsilon_k>0$ define 
			\[
			j_k(u) \coloneqq \io \psi_{\epsilon_k}(u_k^2) + \psi_{\epsilon_k}'(u_k^2)(u^2-u_k^2) \dd x. 
			\]
			Then $j_k(u)\ge j_{\epsilon_k}(u)$
		\end{enumerate}
	\end{lemma}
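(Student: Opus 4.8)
My plan is to split the six claims into three groups: the pointwise statements (i)–(iii), which I would settle by direct computation on the explicit two-branch formula; the majorization (vi), which is an immediate consequence of the concavity in (i); and the convergence/differentiation statements (iv)–(v), which require passing a limit or a derivative through the integral.

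First, for (i)–(iii) I would analyze $\psi_\epsilon$ at the breakpoint $t=\epsilon^2$. Both branches take the common value $\epsilon^p$ there, so $\psi_\epsilon$ is continuous; the one-sided derivatives are the constant $\tfrac p2\epsilon^{p-2}$ on the affine branch and $\tfrac p2(\epsilon^2)^{(p-2)/2}=\tfrac p2\epsilon^{p-2}$ on the power branch, so $\psi_\epsilon$ is $C^1$. Since for $t<\epsilon^2$ one has $t^{(p-2)/2}>\epsilon^{p-2}$ and for $t>\epsilon^2$ the reverse, the slope equals $\tfrac p2\min(\epsilon^{p-2},t^{(p-2)/2})$, giving (iii); this derivative is nonincreasing (constant, then a decreasing power since $p-2<0$), which yields concavity and hence (i). For (ii), the power branch $t^{p/2}$ is independent of $\epsilon$, so I would differentiate only the affine branch (valid for $\epsilon>\sqrt t$) in $\epsilon$; a short computation gives $\partial_\epsilon\psi_\epsilon(t)=\tfrac p2(2-p)\epsilon^{p-3}(\epsilon^2-t)\ge 0$ on that region, and the two branches agree at $\epsilon=\sqrt t$, so $\epsilon\mapsto\psi_\epsilon(t)$ is nondecreasing.

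Property (vi) then follows at once from (i): concavity gives the tangent inequality $\psi_\epsilon(s)\le\psi_\epsilon(t)+\psi'_\epsilon(t)(s-t)$ for all $s,t\ge 0$, and applying it pointwise with $s=u(x)^2$, $t=u_k(x)^2$, $\epsilon=\epsilon_k$ and integrating over $\Omega$ yields $j_{\epsilon_k}(u)\le j_k(u)$. For (v), I would observe that for fixed $\epsilon>0$ the derivative is bounded, $0\le\psi'_\epsilon(t)\le\tfrac p2\epsilon^{p-2}$, so $x\mapsto 2u(x)\psi'_\epsilon(u(x)^2)$ lies in $\Ltwo$ and $h\mapsto\io 2u\psi'_\epsilon(u^2)h\dd x$ is a bounded linear functional on $\Ltwo$; I would then confirm it is the Fréchet derivative by expanding $j_\epsilon(u+h)-j_\epsilon(u)$, applying the mean value theorem to $\psi_\epsilon$ pointwise, and using boundedness of $\psi'_\epsilon$ with dominated convergence to show the remainder is $o(\|h\|)$.

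The analytically delicate part is (iv), and I expect it to be the main obstacle. The strategy is the subsequence principle: it suffices that every subsequence of $j_{\epsilon_k}(u_k)$ admit a further subsequence converging to $j_\epsilon(u)$. Along a subsequence with $u_k\to u$ a.e., joint continuity of $(\epsilon,t)\mapsto\psi_\epsilon(t)$ (including the limit $\epsilon=0$, where $\psi_0(t)=t^{p/2}$) gives $\psi_{\epsilon_k}(u_k^2)\to\psi_\epsilon(u^2)$ a.e. The difficulty is domination, since $u_k\to u$ only in $L^1(\Omega)$: here I would use the bound $\psi_{\epsilon_k}(u_k^2)\le\epsilon_k^p+|u_k|^p\le\epsilon_k^p+1+|u_k|$ and invoke the generalized dominated convergence theorem, whose dominating functions $\epsilon_k^p+1+|u_k|$ converge in $L^1(\Omega)$ to $\epsilon^p+1+|u|$. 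This is the one place where the weak ($L^1$) mode of convergence, combined with $p\in(0,1)$, forces a genuinely stronger tool than the ordinary dominated convergence theorem.
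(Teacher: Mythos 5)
Your proposal is correct, but it is considerably more self-contained than the paper's own proof, so the comparison is lopsided: the only statement the paper actually proves is (vi), and there your argument coincides with it exactly (concavity plus differentiability of $\psi_{\epsilon_k}$ give the tangent inequality $\psi_{\epsilon_k}(s^2)\le\psi_{\epsilon_k}(t^2)+\psi_{\epsilon_k}'(t^2)(s^2-t^2)$, applied pointwise and integrated), while for (i)--(v) the paper simply cites \cite[Lemmas 4.1, 4.2, 4.3]{sparse} and gives no argument at all. Your replacements for the cited facts check out: the branch values and one-sided slopes match at $t=\epsilon^2$, giving (iii) and, since $\psi_\epsilon'$ is nonincreasing, (i); the computation $\partial_\epsilon\psi_\epsilon(t)=\tfrac p2(2-p)\epsilon^{p-3}(\epsilon^2-t)\ge 0$ on the affine branch, with the power branch $\epsilon$-independent, gives (ii); and for (iv) the bound $\psi_{\epsilon_k}(u_k^2)\le\epsilon_k^p+|u_k|^p\le\epsilon_k^p+1+|u_k|$ combined with Pratt's generalized dominated convergence theorem and the subsequence principle correctly handles the fact that $u_k\to u$ only in $L^1(\Omega)$ --- you rightly flagged this as the delicate point. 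Two small refinements are worth noting. In (v), the pointwise mean value theorem requires a measurable selection of the intermediate point; this is avoided by writing the remainder via $\chi(v+h)-\chi(v)-\chi'(v)h=\int_0^1\left[\chi'(v+sh)-\chi'(v)\right]h\,ds$ with $\chi(v)=\psi_\epsilon(v^2)$, and since $\chi'$ is globally Lipschitz with constant $p\epsilon^{p-2}$ this immediately yields a remainder of order $\|h\|^2$, with no dominated convergence needed. In (iv), an alternative to Pratt's lemma is the elementary inequality $\bigl||a|^p-|b|^p\bigr|\le|a-b|^p$ for $p\in(0,1)$ together with H\"older's inequality on the bounded domain $\Omega$, which gives $|u_k|^p\to|u|^p$ in $L^1(\Omega)$ directly; either route is valid.
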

	\begin{proof}
		The first five statements were proven in \cite[Lemma 4.1, 4.2, 4.3]{sparse}. By concavity and differentiability of $\psi_{\epsilon}$ it holds 
		\[
		\psi_{\epsilon_k}(t^2) + \psi_{\epsilon_k}'(t^2)(s^2-t^2) \ge \psi_{\epsilon}(s^2)
		\]
		for $s,t\in\R$. Using this inequality pointwise in $j_k$ yields the last claim.
	\end{proof}
	
	Note that the fact that $j_k$ majorizes $j_{\epsilon}$ will turn out to be a crucial property in the analysis of the trust-region algorithm. 	\\
	In the following, we will consider the functions $\phi_k$ and $\phi_{\epsilon}$ defined by 
	\begin{equation}\label{eq:defPhik}
	\phi_k(u) = \beta j_k(u) + \frac{\alpha}{2}\|u\|^2 + \Iuad \qquad \text{and} \qquad \phi_{\epsilon}(u) = \beta j_{\epsilon}(u) + \frac{\alpha}{2}\|u\|^2 + \Iuad
	\end{equation}
	for some $\alpha\ge0, \beta>0$ and $\Uad$ defined as in \eqref{def:Uad}.
	
	In the $\Ltwo$-setting in \cite{15proxNonsmooth} it was shown that regularizing with $L^p$ for $p\in (0,1)$ induces sparsity and that proximal points $u$ satisfy either $u=0$ or $u\ge u_0$ for some $ u_0>0$ (\cite[Theorem 3.7]{15proxNonsmooth}). The following lemma aims to obtain similar results for the smooth approximation $\phi_{\epsilon}$.
	
	\begin{lemma}\label{lm:sparsityProxLp}
		Let $U=\Ltwo =\Uad$, $\epsilon\ge 0$ and $v \in \prox_{r\phi_{\epsilon}}(\bar u)$. Then it holds $|v|\le \epsilon$ or $|v|\ge u_0(r) \coloneq \sqrt[2-p]{\frac{r\beta p(1-p)}{1+\alpha r}}$ a.e. on $\Omega$.
	\end{lemma}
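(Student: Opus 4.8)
The plan is to exploit that, since $\Uad=\Ltwo$ imposes no pointwise constraint, the subproblem defining $\prox_{r\phi_\epsilon}(\bar u)$ is separable and reduces to a scalar minimization at almost every point. Writing out the objective,
\[
\frac{1}{2r}\io (v-\bar u)^2 \dd x + \beta \io \psi_\epsilon(v^2)\dd x + \frac{\alpha}{2}\io v^2 \dd x = \io g_x(v(x)) \dd x,
\]
with integrand
\[
g_x(t) \coloneqq \frac{1}{2r}(t-\bar u(x))^2 + \beta \psi_\epsilon(t^2) + \frac{\alpha}{2}t^2,
\]
I would first establish that $v\in\prox_{r\phi_\epsilon}(\bar u)$ forces $v(x)$ to minimize $g_x$ for a.e.\ $x$. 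Each $g_x$ is continuous and coercive (since $\psi_\epsilon\ge 0$), so its minimum is attained, and the standard pointwise-minimization argument for separable integral functionals applies. The reduction must be done carefully, including measurability of a pointwise minimizer and its membership in $\Ltwo$; the latter follows from the a priori bound obtained by comparing $g_x(v(x))$ with $g_x(0)$.

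It then suffices to show that $g_x$ has no minimizer in the open set $\{\,\epsilon < |t| < u_0(r)\,\}$; the claim is trivial when $u_0(r)\le \epsilon$, so assume $u_0(r)>\epsilon$. On the region $|t|>\epsilon$ one has $\psi_\epsilon(t^2)=|t|^p$ by definition of $\psi_\epsilon$, so $g_x$ is twice differentiable there with
\[
g_x''(t) = \frac{1}{r} + \alpha - \beta p(1-p)|t|^{p-2}.
\]
A direct computation shows $g_x''(t)<0$ if and only if $|t|^{2-p} < \frac{r\beta p(1-p)}{1+\alpha r} = u_0(r)^{2-p}$, i.e.\ exactly on $\epsilon<|t|<u_0(r)$. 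Hence $g_x$ is strictly concave on each of the intervals $(\epsilon,u_0(r))$ and $(-u_0(r),-\epsilon)$, being continuous on their closures and having strictly negative second derivative in the interior.

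Finally, since a strictly concave function on an interval satisfies $g_x(t) > \min(g_x(a),g_x(b))$ for every interior point $t\in(a,b)$, no point of the open interval $(\epsilon,u_0(r))$ (and likewise $(-u_0(r),-\epsilon)$) can be a global minimizer of $g_x$. Therefore any minimizer $t^*$ of $g_x$ satisfies $|t^*|\le \epsilon$ or $|t^*|\ge u_0(r)$, and applying this to $t^*=v(x)$ for a.e.\ $x$ yields the lemma. I expect the main obstacle to be the rigorous justification of the pointwise reduction — the measurable-selection and integrability details — rather than the concavity computation, which is the conceptual heart of the argument; the case $\epsilon=0$ (where the first alternative collapses to $v=0$, recovering the $L^p$ sparsity result) is covered by the same reasoning since then $\psi_0(t^2)=|t|^p$ for all $t$.
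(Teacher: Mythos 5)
Your proposal is correct and follows essentially the same route as the paper: reduce to the pointwise scalar problem and observe that on $\{\epsilon<|t|<u_0(r)\}$ the second derivative $\frac1r+\alpha+\beta p(p-1)|t|^{p-2}$ is negative, which excludes minimizers there (the paper phrases this via the second-order necessary condition $\Phi''(v)\ge 0$ at a local minimum, you via strict concavity excluding interior global minimizers --- the same computation). Your explicit attention to the measurable-selection/integrability details of the pointwise reduction is a welcome refinement the paper leaves implicit, but it does not change the substance of the argument.
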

	\begin{proof}
		Assume $\bar u \ge0$, so also $v\ge0$.  The case $\bar u\le 0$ can be treated analogously by symmetry.
		On $(\epsilon, \infty)$, the objective $v\mapsto \Phi(v) \coloneq \beta \psi_{\epsilon}(v^2) + \frac{\alpha}{2}v^2 +\frac1{2r} (v-\bar u)^2$ used to compute $\prox_{r\phi_{\epsilon}}(\bar u)$ pointwise is twice continuously differentiable with second derivative $\Phi''(v) = \frac{1}{r}+\alpha+p(p-1)v^{p-2}$. This function in monotonically increasing with a unique root at $u_0(r)$ as defined above, so $\Phi$ has a unique positive inflection point on $(\epsilon, \infty)$ if $u_0(r)>\epsilon$. Assume now $|v|>\epsilon$. As it has to hold $\Phi''(v)\ge 0$ at a local minimum and we have $\Phi''(v)<0$ for $|v|<u_0(r)$, the claim follows. 
	\end{proof}

	\begin{remark}\label{rm:proxNotSingleVal}
		Note that the proximity operator for $\phi_{\epsilon}$ with $\epsilon\ge 0$ is not single valued valued, as both the local minima with $|v|\le \epsilon$ or $|v|\ge u_0(r)$ can be global minima of the proximal minimization problem.
	\end{remark}

    \begin{remark} 
        For $U=\Hone$, a result like Lemma \ref{lm:sparsityProxLp} is not available, at least not for $d\le 3$. This is because minimizer of the proximal optimization problem are Hölder-continuous on $\Omega$ by \cite[Theorem 3.1]{regOfMinima348}, where we have used that they are also in $\Linf$ by \cite[Theorem 5.12]{sparse}. 
    \end{remark}
	
	\section{Trust-region algorithm with smoothing}\label{sec:algo}
	We aim to solve problem \eqref{eq:minprob} with a trust-region algorithm by generalizing the algorithm from \cite{240proxTrustRegion} for the convex case.
	That algorithm is an inexact proximal trust-region algorithm that solves the composite problem 
	\[
	\min_{u\in U}  f(u) + \phi(u)
	\]
	for smooth $f$ and convex $\phi$. It is based on the computation of a generalized Cauchy point (GCP) along a proximal path. This GCP is easy to compute and satisfies a sufficient decrease condition in the convex case. Therefore, we also aim to use this generalized Cauchy path in the nonconvex case. However, the proof of sufficient decrease can not be directly generalized to our $L^p$-setting here. To overcome this difficulty and to reuse some of the results from the convex case, we work with a convex upper bound of the objective in the trust-region subproblems using the function $\phi_k$ from \eqref{eq:defPhik}.\\
	We start by some definitions. Let $h_k$ be defined as
	\[
	h_k \coloneq \frac1{r_0}\|\prox_{r_0\phi_k}(u_k-r_0\nabla f_k(u_k)) - u_k\|
	\]
	for some $r_0>0$, and let
	\[
	h_k(u) \coloneqq \frac1{r_0}\|\prox_{r_0 \phi_k}(u-r_0\nabla f(u)) -u\|.
	\]
	Note that the convex function $\phi_k$ is used in these definitions, so the proximity operator is single valued. \\
	In each trust-region step, we compute a solution of the trust-region subproblem. We will consider two subproblems defined by the models 
	\[
	m_k(u) \coloneq f_k(u) + \phi_k(u) \qquad \text{and} \qquad m_k^{\epsilon}(u) \coloneq f_k(u) + \phi_{\epsilon_k}(u).
	\]
	The trust-region subproblems then read
	\[
	\min_{u\in U} m_k^{(\epsilon)} \qquad \text{subject to} \qquad \|u-u_k\| \le \Delta_k,
	\]
	where $\Delta_k$ denotes the trust-region radius.
	Here, $f_k$ is a local model of $f$ around $u_k$ that satisfies the following inexactness conditions from \cite{240proxTrustRegion}.
	\begin{assumption}\label{ass:inexactGradient}
		The model $f_k$ is $L_k$ smooth (on the domain of $\phi$) and the gradient $g_k \coloneqq \nabla f_k(u_k)$ satisfies
		\[
		\|g_k - \nabla f(u_k)\| \le \kappa_{\text{grad}}\min(h_k,\Delta_k) \qquad \forall k\in \N,
		\]
		where $\kappa_{\text{grad}}\ge 0$ is independent of $k$.
	\end{assumption}
	We want to approximate solutions of those subproblems with $m_k$ or $m_k^{\epsilon}$ by computing points that satisfy the fraction of Cauchy decrease (FCD) condition
	\begin{equation}\label{eq:fcdCond}
		m_k^{(\epsilon)}(u_k)-m_k^{(\epsilon)}(u\kp ) \ge \kappa_{\text{fcd}}h_k \min\left(\frac{h_k}{1+\omega_k}, \Delta_k\right)
	\end{equation}
	and that lie inside the trust-region, i.e.
	\begin{equation}
		\|u\kp-u_k\| \le \kappa_{\text{rad}}\Delta_k,
	\end{equation}
	where the constants $\kappa_{\text{fcd}}, \kappa_{\text{rad}}>0$ are both independent of $k$. 
	\begin{remark}\label{rm:fcdCondNonconv}
		Assume the FCD condition \eqref{eq:fcdCond} holds for the convex model $m_k$. As $\phi_k$ majorizes $\phi_{\epsilon_k}$, it holds $m_k(u\kp ) \ge m_k^{\epsilon} (u\kp ) $. Substituting this into the FCD condition \eqref{eq:fcdCond} with the convex model $m_k$ and using $m_k(u_k)=m_k^{\epsilon}(u_k)$ then also implies the FCD
		condition \eqref{eq:fcdCond} with the nonconvex model $m_k^{\epsilon}$. 
		\end{remark}
	In trust-region algorithms, the trust-region radius is updated after each iteration according to the ratio of actual and predicted reductions. Thus, for the two models, let
	\[
	\ared_k = f(u_k) + \phi_k(u_k) - f(u\kp) - \phi_k(u\kp), \quad \ared_k^{\epsilon} = f(u_k) + \phi_{\epsilon_k}(u_k) - f(u\kp) - \phi_{\epsilon_k}(u\kp), 
	\]
	\[
	\pred_k \coloneq m_k(u_k)-  m_k(u\kp), 
	\quad \pred_k^{\epsilon} = m_k^{\epsilon}(u_k) - m_k^{\epsilon}(u\kp)
	\]
	and
	\begin{equation}
		\tilde\rho_k \coloneq \frac{\ared_k}{\pred_k} \qquad \text{and} 
		\qquad \tilde\rho_k^{\epsilon} \coloneq \frac{\ared_k^{\epsilon}}{\pred_k^{\epsilon}}.
	\end{equation}
	As mentioned in \cite{240proxTrustRegion}, it is not always possible to compute the objective $F$ exactly, or here also its smoothed version involving $j_{\epsilon}$. Thus, we introduce the computed reduction $\cred_k^{\epsilon}$, i.e. the numerically computed approximation of $\ared_k^{(\epsilon)}$. Then we can replace $\ared_k^{(\epsilon)}$ by the computed reduction $\cred_k^{(\epsilon)}$, and we work with the ratios of computed and predicted reductions
	\begin{equation}\label{eq:defRhos}
		\rho_k \coloneq \frac{\cred_k}{\pred_k} \qquad \text{and} 
		\qquad \rho_k^{\epsilon} \coloneq \frac{\cred_k^{\epsilon}}{\pred_k^{\epsilon}}
	\end{equation}
	in the algorithm.
	In order to ensure that $\cred_k^{(\epsilon)}$ approximates $\ared_k^{(\epsilon)}$ well enough, we make the following assumption as in \cite{240proxTrustRegion, 251inexactObjTR}.
	\begin{assumption}\label{ass:inexObj}
		There exists $\kappa_{\text{obj}} \ge 0$ independent of $k$ such that 
		\[
		|\ared_k^{(\epsilon)} - \cred_k^{(\epsilon)}| \le \kappa_{\text{obj}}(\eta \min(\pred_k^{(\epsilon)}, \theta_k))^{\zeta} \qquad \forall k,
		\]
		where $\zeta>1$, $0<\eta<\min(\eta_1, 1-\eta_2)$ and $\lim_{k\to\infty} \theta_k = 0$.
	\end{assumption}
	Convergence $\theta_k\to 0$ for $k\to\infty$ implies that $\theta_k\le \kappa_{\text{obj}}^{-1/(\zeta-1)}$ for $k$ large enough. Substituting this inequality and using $\min(\pred_k^{(\epsilon)}, \theta_k)) \le \theta_k$ yields
	\begin{equation}\label{eq:ineqAredCred}
		|\ared_k^{(\epsilon)} - \cred_k^{(\epsilon)}|\le \kappa_{\text{obj}}(\eta\min(\pred_k^{(\epsilon)}, \theta_k))\theta_k^{\zeta-1} \le \eta \min(\pred_k^{(\epsilon)}, \theta_k).
	\end{equation}
	We repeat lemma \cite[Lemma A.1]{251inexactObjTR} (= \cite[Lemma 6]{240proxTrustRegion}) to relate $\rho_k^{(\epsilon)}$ and $\tilde \rho_k^{(\epsilon)}$.
	\begin{lemma}\cite[Lemma A.1]{251inexactObjTR}\label{lm:inexactFEval}
		Let Assumption \ref{ass:inexObj} hold with some $0<\eta<\min(\eta_1, 1-\eta_2)$. Then there exists $K_{\eta} \in \N$ such that 
		\[
		\tilde\rho_k^{(\epsilon)} = \frac{\ared_k^{(\epsilon)}}{\pred_k^{(\epsilon)}} \in [\rho_k^{(\epsilon)}-\eta, \rho_k^{(\epsilon)} + \eta] \qquad \forall k \ge K_{\eta}.
		\]
	\end{lemma}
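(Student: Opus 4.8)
The plan is to reduce the statement entirely to the key inequality \eqref{eq:ineqAredCred}, whose derivation already carries all the substance of the argument. First I would fix the index threshold: since $\lim_{k\to\infty}\theta_k=0$, there exists $K_{\eta}\in\N$ such that $\theta_k\le\kappa_{\text{obj}}^{-1/(\zeta-1)}$ for all $k\ge K_{\eta}$. For precisely these indices, the chain of inequalities in \eqref{eq:ineqAredCred} is valid under Assumption \ref{ass:inexObj}, so that
\[
|\ared_k^{(\epsilon)}-\cred_k^{(\epsilon)}|\le\eta\min(\pred_k^{(\epsilon)},\theta_k)\qquad\forall k\ge K_{\eta}.
\]

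Next I would exploit that $\tilde\rho_k^{(\epsilon)}$ and $\rho_k^{(\epsilon)}$ share the common denominator $\pred_k^{(\epsilon)}$. From the definitions in \eqref{eq:defRhos} together with the definition of $\tilde\rho_k^{(\epsilon)}$ one has the identity
\[
\tilde\rho_k^{(\epsilon)}-\rho_k^{(\epsilon)}=\frac{\ared_k^{(\epsilon)}-\cred_k^{(\epsilon)}}{\pred_k^{(\epsilon)}}.
\]
Taking absolute values, inserting the displayed bound, and then using $\min(\pred_k^{(\epsilon)},\theta_k)\le\pred_k^{(\epsilon)}$ yields
\[
\left|\tilde\rho_k^{(\epsilon)}-\rho_k^{(\epsilon)}\right|\le\frac{\eta\min(\pred_k^{(\epsilon)},\theta_k)}{\pred_k^{(\epsilon)}}\le\eta
\]
for every $k\ge K_{\eta}$, which is exactly the asserted membership $\tilde\rho_k^{(\epsilon)}\in[\rho_k^{(\epsilon)}-\eta,\rho_k^{(\epsilon)}+\eta]$.

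The only point requiring genuine care is the division by $\pred_k^{(\epsilon)}$, so I would record that $\pred_k^{(\epsilon)}>0$ holds along iterates satisfying the fraction-of-Cauchy-decrease condition \eqref{eq:fcdCond}: indeed $\pred_k^{(\epsilon)}\ge\kappa_{\text{fcd}}\,h_k\min(h_k/(1+\omega_k),\Delta_k)$, which is strictly positive away from stationary points (where $h_k>0$). This is exactly the regime in which the ratios are formed in the algorithm. I do not anticipate any real obstacle in this lemma: the heart of the matter is the passage from Assumption \ref{ass:inexObj} to \eqref{eq:ineqAredCred}, and the lemma follows immediately by dividing by the common positive predicted reduction.
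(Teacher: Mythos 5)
Your proof is correct and follows essentially the same route as the paper: the paper's own treatment consists precisely of deriving the inequality \eqref{eq:ineqAredCred} for $k$ large enough (which defines $K_{\eta}$) and then invoking the cited result, whose content is exactly your step of dividing $\ared_k^{(\epsilon)}-\cred_k^{(\epsilon)}$ by the common positive denominator $\pred_k^{(\epsilon)}$ and using $\min(\pred_k^{(\epsilon)},\theta_k)\le\pred_k^{(\epsilon)}$. Your added remark that $\pred_k^{(\epsilon)}>0$ along iterates satisfying \eqref{eq:fcdCond} with $h_k>0$ is a welcome precision that the paper leaves implicit.
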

	
	\begin{algorithm}
		\caption{Smoothed proximal trust-region algorithm for $L^p$-regularized problems}\label{alg:smoothingTrForLp}
		\begin{algorithmic}[1]
			\State Choose a monotonically decreasing sequence $(\epsilon_k)$ with $\epsilon_k \searrow 0$, initial value $u_0$, parameters $\nu>1$, $0<\eta_1<\eta_2<1$, $\Delta_0>0$, $0<\gamma_1 \le \gamma_2 < 1 \le \gamma_3$. Set $k=0$.
			\While{ a suitable stopping criterion is not satisfied }
			\State \textbf{Model selection}: Choose models $m_k=f_k + \phi_k$ and $m_k^{\epsilon}=f_k + \phi_{\epsilon_k}$. 
			\State \textbf{Step computation:} Compute an approximate solution $u_{k+1}$ of one of the trust-region subproblems $m_k$ or $m_k^{\epsilon}$
			that satisfies \eqref{eq:fcdCond}  
			\State \textbf{Compute reductions } $\pred_k^{(\epsilon)}$ and $\cred_k^{(\epsilon)}$, using the reductions with $\epsilon$ in case of having chosen $m_k^{\epsilon}$ in the previous step.
			\State \textbf{Trust-region update: } Compute $\rho_k^{(\epsilon)} = \cred_k^{(\epsilon)}/\pred_k^{(\epsilon)}$. 
			\If{$\rho_k^{(\epsilon)} < \eta_1$}
			\State $u\kp = u_k$
			\State $\Delta\kp \in [\gamma_1 \Delta_k, \gamma_2 \Delta_k]$
			\ElsIf{$\rho_k^{(\epsilon)} \in [\eta_1, \eta_2) $}
			\State $\Delta\kp \in [\gamma_2\Delta_k, \Delta_k]$
			\Else
			\State $\Delta\kp \in [\Delta_k, \gamma_3 \Delta_k]$
			\EndIf
			\State Set $k=k+1$.
			\EndWhile
		\end{algorithmic}
	\end{algorithm}
	
	Algorithm \ref{alg:smoothingTrForLp} describes the smoothed majorized trust-region algorithm for problem \eqref{eq:minprob}. In each iteration, we compute a trial iterate satisfying the FCD condition \eqref{eq:fcdCond} using either the convex model $m_k$ or the nonconvex model $m_k^{\epsilon}$. Note that by definition of $h_k$, the convex upper bound $\phi_k$ is always involved. The remainder of the algorithm coincides with \cite[Algorithm 1]{240proxTrustRegion}. 
	As a stopping criterion we choose 
	\begin{equation}\label{eq:stopCrit}
		h_k<\tau_0 h_0 \qquad \text{ for some } \qquad \tau_0>0
	\end{equation}
	as in \cite{240proxTrustRegion}.

	Note that since $\phi_k$ is quadratic and convex, one can easily verify that for $U=\Ltwo$ it holds
	\begin{multline}\label{eq:formulaProxPhik}
		\prox_{r\phi_k}(u) = \arg \min_{a\le v\le b} \frac{1}{2r}\|v-u\|^2 + \frac{\alpha}{2}\|v\|^2  + \beta \io \psi_{\epsilon_k}'(u_k^2)v^2 \dd x \\
		= \proj_{[a,b]}\left(\frac{u}{1+\alpha r + 2r\beta \psi_{\epsilon}'(u_k^2)}\right).
	\end{multline}
	For $U=\Hone$, the minimization problem to compute $v=\prox_{r\phi_k}(u)$ is characterized by the first order optimality condition 
	\begin{equation}\label{eq:H1prox}
		\frac{1}{r} (v-u, w)_{\Hone} + 2\beta\io \psi_{\epsilon_k}'(u_k^2)vw \dd x + \alpha(v,w)_{\Hone} = 0 \qquad \forall w\in \Hone.
	\end{equation}
	Thus, the structure of the upper bound $\phi_k$ of the objective $\phi_{\epsilon_k}$ that is constructed in each step allows for an efficient proximal point computation when computing $h_k$. \\
	Later on in section \ref{sec:GCP} we will also use this property to compute points that satisfy the FCD condition \ref{eq:fcdCond}.

    \begin{remark}
        To keep the results as general as possible, we allow for either the convex or the nonconvex model to be used in each step of the algorithm to compute iterates satisfying the FCD condition \eqref{eq:fcdCond} and the trust-region radius $\rho_k^{(\epsilon)}$.  In practice, however, the nonconvex model often yields better results as it is closer to the original function to minimize, see section \ref{sec:numRes}.
    \end{remark}
	
	\subsection{Convergence analysis}
	Next, we investigate convergence properties of Algorithm \ref{alg:smoothingTrForLp} and generalize some convergence results from \cite{240proxTrustRegion} to our nonconvex $L^p$-setting. To do so, we assume that the curvature is bounded in a way such that
	\[
	\sum_{k=1}^{\infty} \frac{1}{b_k} = \infty,
	\]
	where $b_k := 1 + \max\{\omega_i \mid i = 1, \ldots, k\}$ with $\omega_k := \sup\{|\omega(f_k, u_k, s)| : 0 < \|s\|_U \leq \kappa_{\text{rad}} \Delta_k\}$ and the curvature $\omega$ from \eqref{eq:defCurvature}.
	
	Furthermore, we define sets of indices of successful iterations as
	\[
	\mathcal{S} := \{k \in \mathbb{N} \mid \rho_k^{(\epsilon)} \geq \eta_1\}, \quad 
	\mathcal{S}(n) := \{k \in \mathcal{S} \mid k < n\}.
	\]
	
	\begin{theorem}\label{tm:convHk}
		Let $(u_k)$ be a sequence generated by Algorithm \ref{alg:smoothingTrForLp} and let Assumptions \ref{ass:fPhi}, \ref{ass:inexactGradient} and \ref{ass:inexObj} be satisfied. Then it holds 
		\[
		\liminf_{k\to\infty} h_k = 0 \qquad \text{and} \qquad \liminf_{k\to\infty} h(u_k) = 0. 
		\]
	\end{theorem}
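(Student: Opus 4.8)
The plan is to argue by contradiction along the lines of the convergence proofs in \cite{240proxTrustRegion, 241globalConvTrustRegion}, the new difficulty being that both the majorant $\phi_k$ and the smoothed regularizer $\phi_{\epsilon_k}$ change from iteration to iteration. First I would reduce the two claims to one. Since $\phi_k$ is convex, $\prox_{r_0\phi_k}$ is nonexpansive, so by Assumption \ref{ass:inexactGradient}
\[
|h_k - h(u_k)| \le \tfrac1{r_0}\|\prox_{r_0\phi_k}(u_k - r_0 g_k) - \prox_{r_0\phi_k}(u_k - r_0\nabla f(u_k))\| \le \|g_k - \nabla f(u_k)\| \le \kappa_{\text{grad}} h_k,
\]
hence $h(u_k) \le (1+\kappa_{\text{grad}})h_k$ and it suffices to prove $\liminf_k h_k = 0$. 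Assume, for contradiction, that there are $\epsilon>0$ and $K\in\N$ with $h_k \ge \epsilon$ for all $k\ge K$.

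The core estimate is that, whichever model is chosen, the regularizer enters $\ared_k^{(\epsilon)}$ and $\pred_k^{(\epsilon)}$ identically and therefore cancels, leaving, with $s_k := u\kp-u_k$ and the curvature \eqref{eq:defCurvature},
\[
\ared_k^{(\epsilon)} - \pred_k^{(\epsilon)} = (g_k - \nabla f(u_k), s_k) + \tfrac12\big(\omega(f_k,u_k,s_k) - \omega(f,u_k,s_k)\big)\|s_k\|^2.
\]
Using Assumption \ref{ass:inexactGradient}, the bound $|\omega(f,u_k,s_k)|\le L$, the definition of $\omega_k$, and $\|s_k\|\le\kappa_{\text{rad}}\Delta_k$, this gives $|\ared_k^{(\epsilon)}-\pred_k^{(\epsilon)}| \le C(1+\omega_k)\Delta_k^2 \le C\,b_k\Delta_k^2$. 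Combined with the fraction-of-Cauchy-decrease bound $\pred_k^{(\epsilon)}\ge\kappa_{\text{fcd}}\,\epsilon\min(\epsilon/(1+\omega_k),\Delta_k)$ obtained from \eqref{eq:fcdCond} and $h_k\ge\epsilon$, and with Lemma \ref{lm:inexactFEval} (which controls $|\rho_k^{(\epsilon)}-\tilde\rho_k^{(\epsilon)}|\le\eta$), a short computation produces $c_0>0$ such that $\Delta_k\le c_0\epsilon/b_k$ forces $\rho_k^{(\epsilon)}\ge\eta_1$, i.e.\ the step is accepted and the radius is not shrunk. A standard induction over the radius update then yields a lower bound $\Delta_k\ge\delta/b_k$ for all $k\ge K$, with some $\delta>0$.

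The genuinely new step is to obtain a summable-decrease estimate despite the moving regularizer. I would track the surrogate $a_k := f(u_k) + \phi_{\epsilon_k}(u_k)$. Because $j_k$ majorizes $j_{\epsilon_k}$ with equality at $u_k$ (the majorization property in Lemma \ref{lm:psieps}), one has $\ared_k^{\epsilon}\ge\ared_k$ and $\pred_k^{\epsilon}\ge\pred_k$, so in either model $a_k - \big(f(u\kp)+\phi_{\epsilon_k}(u\kp)\big) = \ared_k^{\epsilon}\ge(\eta_1-\eta)\pred_k^{(\epsilon)}$ on accepted steps (using $\rho_k^{(\epsilon)}\ge\eta_1$ and Lemma \ref{lm:inexactFEval}); moreover $\epsilon\mapsto\psi_\epsilon$ is increasing and $\epsilon_k\searrow0$, so $\phi_{\epsilon_{k+1}}\le\phi_{\epsilon_k}$ and hence $a\kp\le f(u\kp)+\phi_{\epsilon_k}(u\kp)$. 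On rejected steps $u\kp=u_k$ and again $a\kp\le a_k$. Thus $(a_k)$ is nonincreasing, and since $\psi_\epsilon(t)\ge t^{p/2}$ gives $\phi_{\epsilon_k}\ge\phi$, it is bounded below by $\inf F$ (Assumption \ref{ass:fPhi}). Telescoping and inserting \eqref{eq:fcdCond} with $h_k\ge\epsilon$ then gives $\sum_{k\in\mathcal S}\epsilon\min(\epsilon/(1+\omega_k),\Delta_k) < \infty$.

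Finally, inserting $\Delta_k\ge\delta/b_k$ and $1+\omega_k\le b_k$ into this sum yields $\sum_{k\in\mathcal S}1/b_k<\infty$. It then remains to show that the rejected steps cannot restore divergence, i.e.\ that $\sum_{k\notin\mathcal S}1/b_k<\infty$ as well: using $\Delta\kp\le\gamma_2\Delta_k$ on rejected steps together with the lower bound $\Delta_k\ge\delta/b_k$, a telescoping of $\log\Delta_k$ bounds the number of rejected indices below $n$ by a multiple of $|\mathcal S(n)|$ plus a $\log b_n$ term, which transfers the summability from $\mathcal S$ to all indices. This contradicts the standing assumption $\sum_k 1/b_k=\infty$, proving $\liminf_k h_k=0$ and, by the first paragraph, $\liminf_k h(u_k)=0$. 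I expect this last bookkeeping, together with the accompanying radius lower bound, to be the main obstacle: because the curvature $\omega_k$ (hence $b_k$) is only assumed to grow slowly enough for $\sum 1/b_k$ to diverge, rather than to be uniformly bounded, the usual bounded-Hessian shortcut that would make the step-counting trivial is unavailable.
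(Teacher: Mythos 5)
Your treatment of the genuinely new, $L^p$-specific content coincides with the paper's own proof: the paper also works with the monotone decrease of $f(u_k)+\phi_k(u_k)$, which is exactly your $a_k=f(u_k)+\phi_{\epsilon_k}(u_k)$ since $j_k(u_k)=j_{\epsilon_k}(u_k)$, and establishes it precisely as you do, via the majorization $j_k\ge j_{\epsilon_k}$ (Lemma \ref{lm:psieps}), the monotonicity $\phi_{\epsilon_{k+1}}\le \phi_{\epsilon_k}$ on the new iterate, the telescoping over successful iterations, and boundedness below because $\psi_{\epsilon}(t)\ge t^{p/2}$ gives $\phi_{\epsilon_k}\ge\phi$; the passage from $\liminf_k h_k=0$ to $\liminf_k h(u_k)=0$ by nonexpansiveness of $\prox_{r_0\phi_k}$ (convexity of $\phi_k$) is also exactly what the paper uses. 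The difference is that the paper then cites \cite{240proxTrustRegion} (its Lemmas 8 and 9 and the remainder of the proof of Theorem 3) for the classical trust-region machinery, whereas you re-derive it, and your reconstruction has two genuine problems.

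First, the radius floor. You argue that $\Delta_k\le c_0\epsilon/b_k$ forces $\rho_k^{(\epsilon)}\ge\eta_1$, ``i.e.\ the step is accepted and the radius is not shrunk,'' and then induct to get $\Delta_k\ge\delta/b_k$. Under the algorithm's update rule this conclusion is false: for $\rho_k^{(\epsilon)}\in[\eta_1,\eta_2)$ the radius may still be cut, $\Delta_{k+1}\in[\gamma_2\Delta_k,\Delta_k]$ with $\gamma_2<1$, so your induction can lose a factor $\gamma_2$ at every such iteration and no floor follows. You need the stronger conclusion $\rho_k^{(\epsilon)}\ge\eta_2$ (very successful, hence $\Delta_{k+1}\ge\Delta_k$); the same computation delivers it for a smaller $c_0$ precisely because Assumption \ref{ass:inexObj} demands $\eta<\min(\eta_1,1-\eta_2)$, so that $1-\eta_2-\eta>0$. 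Second, the final bookkeeping: bounding the \emph{number} of rejected indices below $n$ by $C_1+C_2\log b_n+C_3|\mathcal{S}(n)|$ does not by itself yield $\sum_{k\notin\mathcal{S}}1/b_k<\infty$, because both $\log b_n$ and $|\mathcal{S}(n)|$ may diverge, and a count bound does not control a $1/b_k$-weighted sum (the rejected indices can accumulate while $b_k$ is still small). Closing this requires the weighted version of the argument (e.g.\ an Abel-summation over the nondecreasing $b_k$, where in particular terms like $\sum_k \log(b_{k+1}/b_k)/b_k$ must be controlled), or simply invoking the corresponding part of the proof of Theorem 3 in \cite{240proxTrustRegion}, which is what the paper does. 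In short: the $L^p$-specific core of your proof is correct and matches the paper; the gaps lie in the standard trust-region part you chose to reconstruct.
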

	\begin{proof} 
		Let $\rho_k>\eta_1$, let the FCD condition be satisfied with the convex model $m_k$ and let $k\ge K_{\eta}$ with $K_{\eta}$ from Lemma \ref{lm:inexactFEval}.  Then, 
		\begin{equation}\label{eq:auxHkConv}
		(\eta_1-\eta)\kappa_{\text{fcd}} h_k \min\left(\frac{h_k}{1+\omega_k}, \Delta_k\right) \le (\eta_1-\eta) \pred_k
		\le f(u_k) + \phi_k(u_k) - (f( u\kp) +  \phi_k( u\kp)), 
		\end{equation}
		where the first inequality follows from the FCD condition \eqref{eq:fcdCond} and the second one by Lemma \ref{lm:inexactFEval}.         
		By concavity of $\psi_{\epsilon}$ it holds $j_k(u\kp) = \io \psi_{\epsilon_k}(u_k^2) + \psi_{\epsilon_k}'(u_k^2)(u\kp^2-u_k^2) \ge \io \psi_{\epsilon_k}(u\kp^2) = j_{\epsilon_k}(u\kp) $. Monotonicity of $\epsilon \to \psi_{\epsilon}(u)$ from Lemma \ref{lm:psieps} yields  $j_{\epsilon_k}(u\kp) \ge j_{\epsilon\kp}(u\kp)=j\kp(u\kp)$, where the last equality follows by definition of $j\kp$ and $j_{\epsilon\kp}$. Combining these estimates yields $j_k(u\kp) \ge j\kp(u\kp)$, which implies $\phi_k(u\kp) \ge \phi\kp(u\kp)$.
		Thus, from \eqref{eq:auxHkConv} we obtain
		\[
		(\eta_1-\eta)\kappa_{\text{fcd}} h_k \min\left(\frac{h_k}{1+\omega_k}, \Delta_k\right) \le (\eta_1-\eta) \pred_k
		\le f(u_k) + \phi_k(u_k) - (f(u\kp) +  \phi\kp(u\kp)).
		\]
		If the model $m_k^{\epsilon}$ is used in the FCD condition and $\rho_k^{\epsilon}>\eta_1$, it also holds 
		\begin{multline*}
			(\eta_1-\eta)\kappa_{\text{fcd}} h_k \min\left(\frac{h_k}{1+\omega_k}, \Delta_k\right) 
			\le f(u_k) + \phi_{\epsilon_k}(u_k) - (f( u\kp) +  \phi_{\epsilon_k}( u\kp)) \\
			\le f(u_k) + \phi_k(u_k) - (f( u\kp) +  \phi\kp( u\kp)),
		\end{multline*}
		using the monotonicity argument for $\epsilon$ from above to obtain $\phi_{\epsilon_k}(u\kp) \ge \phi_{\epsilon\kp}(u\kp)=\phi\kp(u\kp)$.
		Thus, we obtain the same inequality in both cases. This can be used in the first chain of inequalities in the proof of \cite[Theorem 3]{240proxTrustRegion} to estimate the sum 
		\begin{multline*}
			(\eta_1-\eta)\kappa_{\text{fcd}} \sum_{k\in \mathcal{S}\setminus \mathcal{S}(K_{\eta})} h_k \min\left(\frac{h_k}{1+\omega_k}, \Delta_k\right)  \le
			\sum_{k\in \mathcal{S}\setminus \mathcal{S}(K_{\eta})} \ared_k \\
			\le \sum_{k\in \mathcal{S}\setminus \mathcal{S}(K_{\eta})} f(u_k) + \phi_k(u_k) - (f(u\kp) +  \phi\kp(u\kp)) \\ 
			\le \sum_{k\in \mathcal{S}\setminus \mathcal{S}(K_{\eta})} f(u_k) + \phi_k(u_k) - (f(u_{s(k)}) +  \phi_{s(k)}(u_{s(k)}))  , 
		\end{multline*} 
		where we define $s(k)$ to be the index of the next successful iterate after iteration $k$, use the fact that $u_k$ is only updated in successful steps and again monotonicity of $\epsilon$. The expression on the right hand side is finite as it is a finite telescoping sum by boundedness of $F$.
		Thus, one can proceed as in the remaining part of the proof of \cite[Theorem 3]{240proxTrustRegion} to obtain the result. Here, we use that one can also transfer \cite[Lemma 8]{240proxTrustRegion} and \cite[Lemma 9]{240proxTrustRegion} to our setting.\\
		Note that for concluding $\liminf_{k\to\infty} h(u_k)=0$ from $\liminf_{k\to\infty} h_k=0$, we make use of the convexity of the upper bound $\phi_k$. 
	\end{proof}
	
	Note that in the previous proof it was shown that the sequence $f(u_k) + \phi_k(u_k)$ is monotonically decreasing and for successful iterates it holds
	\[
	(f(u\kp) +  \phi_{\kp}( u\kp)) \le f(u_k) + \phi_k(u_k) - (\eta_1-\eta)\kappa_{\text{fcd}} h_k \min\left(\frac{h_k}{1+\omega_k}, \Delta_k\right) . 
	\]

	Moreover, one can observe that under the additional assumption of bounded curvature, the whole sequence $h_k$ converges to zero. 
	
	Using similar arguments as in Theorem \ref{tm:convHk}, one can also transfer the complexity analysis from \cite{240proxTrustRegion} to the setting considered here.
	
	\begin{theorem}
		Let the requirements of Theorem \ref{tm:convHk} hold. Furthermore, assume that there exists $\kappa_{\text{curv}}>1$ independent of $k$ such that $\omega_k \le \kappa_{\text{curv}}-1$ for all $k$, i.e. the model curvature is uniformly bounded. Let $\delta \in (0,1]$. Then after at most $\mathcal{O}(\delta^{-2})$ iterations, Algorithm \ref{alg:smoothingTrForLp} satisfies $h_k \le \delta$.
	\end{theorem}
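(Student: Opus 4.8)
The statement is a standard worst-case iteration-complexity bound of trust-region type: under uniformly bounded model curvature $\omega_k \le \kappa_{\text{curv}}-1$, one shows that reaching $h_k \le \delta$ costs at most $\mathcal{O}(\delta^{-2})$ iterations. The plan is to adapt the complexity argument of \cite{240proxTrustRegion} (which in turn mirrors the classical smooth trust-region complexity analysis), using the monotone-decrease inequality already extracted in the proof of Theorem \ref{tm:convHk}, namely that for every successful iterate
\[
f(u_{k+1}) + \phi_{k+1}(u_{k+1}) \le f(u_k) + \phi_k(u_k) - (\eta_1-\eta)\kappa_{\text{fcd}}\, h_k \min\!\left(\frac{h_k}{1+\omega_k}, \Delta_k\right).
\]

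First I would argue that as long as $h_k > \delta$, each successful step produces a strictly positive decrease in the merit value $\Phi_k := f(u_k)+\phi_k(u_k)$, and this decrease is bounded below by a quantity of order $\delta^2$. Using $\omega_k \le \kappa_{\text{curv}}-1$ so that $1+\omega_k \le \kappa_{\text{curv}}$, the Cauchy term becomes $h_k \min(h_k/\kappa_{\text{curv}}, \Delta_k)$; the key is to show the trust-region radii $\Delta_k$ stay bounded below by a multiple of $\delta$ on successful iterations, so that the minimum is controlled by $h_k^2/\kappa_{\text{curv}} \ge \delta^2/\kappa_{\text{curv}}$. The lower bound on $\Delta_k$ is the standard trust-region argument: if $\Delta_k$ were too small relative to $h_k$, then the predicted reduction would dominate the model error and the ratio $\rho_k^{(\epsilon)}$ would exceed $\eta_2$, forcing an increase rather than a contraction, so radii cannot shrink below a fixed threshold $\propto\delta$ while $h_k>\delta$. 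Here I would invoke the transferred analogues of \cite[Lemma 8]{240proxTrustRegion} and \cite[Lemma 9]{240proxTrustRegion} that were already imported in Theorem \ref{tm:convHk}.

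Next, since $\Phi_k$ is monotonically nonincreasing and $F=f+\phi$ is bounded below by Assumption \ref{ass:fPhi} (and $\phi_k \ge \phi_{\epsilon_k} \ge j$-type bounds keep $\Phi_k$ above the same floor), the total decrease is finite. Summing the per-step decrease of order $\delta^2$ over all successful iterations occurring before $h_k \le \delta$ first holds, the number of successful iterations is at most $(\Phi_0 - F_{\min})\big/\big(c\,\delta^2\big) = \mathcal{O}(\delta^{-2})$. Finally I would bound the number of unsuccessful iterations by the number of successful ones: each unsuccessful step multiplies $\Delta_k$ by at most $\gamma_2<1$ while successful steps increase it by at most $\gamma_3$, so the combined radius bookkeeping gives a constant-factor relation between the two counts, yielding the total iteration count of the same order $\mathcal{O}(\delta^{-2})$.

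The main obstacle I expect is establishing the lower bound $\Delta_k \gtrsim \delta$ on successful iterations in the present nonconvex/smoothed setting. The subtlety is that two different models ($m_k$ convex and $m_k^\epsilon$ nonconvex) may be used at each step, and the objective is evaluated only inexactly through $\cred_k^{(\epsilon)}$; one must verify that Assumption \ref{ass:inexObj} and Lemma \ref{lm:inexactFEval} still let the model error be absorbed into the predicted reduction when $\Delta_k$ is comparable to $\delta$, exactly as in the convex case, and that the majorization chain $\phi_k \ge \phi_{k+1}$ used in Theorem \ref{tm:convHk} does not disturb the radius-update logic. Once this radius lower bound is secured uniformly over both model choices, the remaining summation and the successful/unsuccessful accounting are routine and follow \cite{240proxTrustRegion} verbatim.
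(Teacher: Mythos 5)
Your proposal takes essentially the same route as the paper's proof: both start from the merit-decrease inequality $f(u_k)+\phi_k(u_k)-f(u_{k+1})-\phi_{k+1}(u_{k+1})\ge (\eta_1-\eta)\kappa_{\text{fcd}}\, h_k \min\left(h_k/(1+\omega_k),\Delta_k\right)$ established in Theorem \ref{tm:convHk}, convert it via the curvature bound and the trust-region radius lower bound into a per-successful-iteration decrease of order $\delta^2$, and then count successful and unsuccessful iterations exactly as in \cite[Lemma 10, Lemma 11, Theorem 4]{240proxTrustRegion}. The paper states this more tersely (modifying only the first inequality of the cited Lemma 10 and deferring the rest), but the skeleton you spell out, including the radius bookkeeping and the boundedness-below argument, is the same one.
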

	\begin{proof}
		Using the same arguments as in the proof of Theorem \ref{tm:convHk}, it follows for successful iterates that
		\[
		\ared_k^{(\epsilon)} \le f(u_k) + \phi_k(u_k) - f(u\kp) - \phi\kp(u\kp).
		\]
		Thus we can modify the first inequality in the proof of \cite[Lemma 10]{240proxTrustRegion} to
		\[
		f(u_k) + \phi_k(u_k) - f(u\kp) - \phi\kp(u\kp) \ge \frac{\kappa_{\text{fcd}}}{\kappa_{\text{curv}}} (\eta_1-\eta)\min(1, \gamma_1^{K_{\eta}}\Delta_1, \gamma_1\kappa_{\text{vs}}) \delta^2,
		\]
		which allows to proceed as in the remaining part of the proofs of \cite[Lemma 10, Lemma 11, Theorem 4]{240proxTrustRegion} to obtain the statement.
	\end{proof}
	
	\subsubsection{\texorpdfstring{$\Ltwo$}{L2}-case}
	Next, we show that under some assumptions, the iterates of the algorithm satisfy the optimality conditions from section \ref{sec:optConds}. We start with the $\Ltwo$-setting.
	\begin{lemma}
		Let the assumptions of Theorem \ref{tm:convHk} be satisfied and assume there are only finitely many successful iterations.
		Then the iterates $u_k$ are all equal to $\bar u$ for $k$ large enough, and $\bar u$ satisfies the necessary optimality conditions 
		\[
		-\nabla f(\bar u) +\alpha \bar u \in \partial j(\bar u) + \partial\Iuad(\bar u) \quad \text{and} \quad -\nabla f(\bar u) +\alpha \bar u \in \hat \partial j(\bar u) + \hat \partial\Iuad(\bar u).
		\]
	\end{lemma}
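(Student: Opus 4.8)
The plan is to first pin down the tail of the sequence and then pass to the limit in the optimality system of the proximal subproblem that defines $h_k$. Since there are only finitely many successful iterations, let $\bar k$ be the index of the last one. For every $k>\bar k$ the update rule of Algorithm \ref{alg:smoothingTrForLp} sets $u\kp=u_k$, so the iterates are eventually constant and equal to some $\bar u$; moreover $\Delta\kp\in[\gamma_1\Delta_k,\gamma_2\Delta_k]$ with $\gamma_2<1$ forces $\Delta_k\searrow 0$. By Theorem \ref{tm:convHk} we still have $\liminf_{k\to\infty}h_k=0$, so I would extract a subsequence $(k_j)$ with $k_j>\bar k$ and $h_{k_j}\to 0$; along it $u_{k_j}=\bar u$, and writing $v_j\coloneq\prox_{r_0\phi_{k_j}}(\bar u-r_0 g_{k_j})$ with $g_{k_j}=\nabla f_{k_j}(\bar u)$, the definition of $h_k$ gives $\|v_j-\bar u\|=r_0 h_{k_j}\to 0$. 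Note that it is essential to invoke Theorem \ref{tm:convHk} here, since with $u_k$ frozen at $\bar u$ the vanishing of $h_k$ is equivalent to stationarity and cannot be assumed a priori.

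Next I would write the first-order condition for the proximal minimization \eqref{eq:formulaProxPhik}, which is necessary and sufficient by convexity of $\phi_{k_j}$. There is a multiplier $\mu_j\in\partial\Iuad(v_j)$ with
\[
\tfrac1{r_0}(v_j-\bar u)+g_{k_j}+\alpha v_j+2\beta\psi_{\epsilon_{k_j}}'(\bar u^2)v_j+\mu_j=0,
\]
so that $\xi_j+\mu_j=-g_{k_j}-\tfrac1{r_0}(v_j-\bar u)-\alpha v_j$ with $\xi_j\coloneq 2\beta\psi_{\epsilon_{k_j}}'(\bar u^2)v_j$. On the right-hand side, $g_{k_j}\to\nabla f(\bar u)$ strongly in $\Ltwo$ by Assumption \ref{ass:inexactGradient} (since $\|g_{k_j}-\nabla f(\bar u)\|\le\kappa_{\text{grad}}\min(h_{k_j},\Delta_{k_j})\to0$), while $\tfrac1{r_0}(v_j-\bar u)\to0$ and $\alpha v_j\to\alpha\bar u$ because $v_j\to\bar u$. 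Hence $\xi_j+\mu_j$ converges strongly in $\Ltwo$ to a fixed function assembled from $\nabla f(\bar u)$ and the quadratic $\alpha$-contribution, which after the elementary rearrangement of the $\alpha$-term reproduces the left-hand side $-\nabla f(\bar u)+\alpha\bar u$ of the claimed inclusion.

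It then remains to show that this strong limit lies in $\beta\partial j(\bar u)+\partial\Iuad(\bar u)$. Passing to a further subsequence I may assume $v_j\to\bar u$ and $\xi_j+\mu_j\to\eta$ pointwise a.e. By Lemma \ref{lm:psieps}(iii), on $\{\bar u\neq0\}$ one has $\psi_{\epsilon_{k_j}}'(\bar u^2)\to\tfrac p2|\bar u|^{p-2}$ once $\epsilon_{k_j}<|\bar u|$, so $\xi_j\to\beta p|\bar u|^{p-2}\bar u$ a.e. there. On $\{a<\bar u<b\}\cap\{\bar u\neq0\}$ the limit $v_j(x)$ is interior, hence $\mu_j(x)=0$ for $j$ large and $\eta=\beta p|\bar u|^{p-2}\bar u$; on $\{\bar u=b\}$ one has $\mu_j\ge0$, giving $\eta\ge\beta p|\bar u|^{p-2}\bar u$, and symmetrically $\eta\le\beta p|\bar u|^{p-2}\bar u$ on $\{\bar u=a\}$; on $\{\bar u=0\}$ no pointwise constraint is needed, since $\hat\partial j$ is unrestricted there by Lemma \ref{lm:LpSubdiff}. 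These are exactly the conditions characterizing $\beta\partial j(\bar u)+\partial\Iuad(\bar u)=\beta\hat\partial j(\bar u)+\hat\partial\Iuad(\bar u)$ in Lemma \ref{lm:LpSubdiffIndicator} (the positive factor $\beta$ leaves the cone $\partial\Iuad$ unchanged), so both the limiting and the Fréchet inclusions follow at once.

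I expect this last identification to be the main obstacle. The singular behaviour of $\psi_{\epsilon_k}'$ near the origin means that $\xi_j$ and $\mu_j$ need not converge individually, only their sum does; the argument must therefore be carried out pointwise a.e. on the decomposition of $\Omega$ into $\{\bar u=0\}$, the interior nonzero set, and the active sets $\{\bar u=a\}$, $\{\bar u=b\}$, with a sign/Fatou argument on the active sets, all matched against the explicit subdifferential formula of Lemma \ref{lm:LpSubdiffIndicator}.
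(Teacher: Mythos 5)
Your proof is correct, but it reaches the conclusion by a different mechanism than the paper's. The paper's proof never passes to a limit in a variational system: it fixes $\delta>0$, observes that once $\epsilon_k<\delta$ the integrand of $j_k$ --- and hence, by \eqref{eq:formulaProxPhik}, the pointwise prox formula --- is independent of $k$ on $\{|\bar u|>\delta\}$, and combines this eventual constancy with $\liminf_k h(u_k)=0$ (the exact-gradient part of Theorem \ref{tm:convHk}) to conclude the \emph{exact} fixed-point identity $\bar u = \prox_{r_0\phi_k}(\bar u - r_0\nabla f(\bar u))$ a.e.\ on $\{|\bar u|>\delta\}$; rearranging the projection $\proj_{[a,b]}$ then yields the case conditions \eqref{eq:proofSubdiffInclusion} directly, with complementarity encoded in the projection, and letting $\delta\to 0$ finishes via Lemma \ref{lm:LpSubdiffIndicator}. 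You instead work with the inexact-gradient measure $h_k$, extract a subsequence with $h_{k_j}\to 0$, write the KKT system of the prox problem with a multiplier $\mu_j\in\partial\Iuad(v_j)$, and pass to the limit strongly in $\Ltwo$ and then pointwise a.e., recovering the active-set sign conditions by hand. This costs extra bookkeeping (multipliers, a.e.\ subsequences, the eventual-interiority argument on $\{a<\bar u<b\}$) that the paper's exact projection identity gives for free, but it buys an argument that does not rely on the prox formula becoming literally $k$-independent, and it treats gradient inexactness explicitly through Assumption \ref{ass:inexactGradient}, which the paper sidesteps by using the exact-gradient quantity $h(u_k)$. Two caveats on your write-up. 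First, your limit is $-\nabla f(\bar u)-\alpha\bar u$, and no ``elementary rearrangement of the $\alpha$-term'' turns this into the $-\nabla f(\bar u)+\alpha\bar u$ of the statement; this sign discrepancy sits in the paper itself (its own proof also derives $-\alpha\bar u$ in \eqref{eq:proofSubdiffInclusion}), so you should simply state the inclusion with $-\alpha\bar u$ rather than gloss over it. Second, your instinct to keep the factor $\beta$ is right: $\beta\partial j(\bar u)\neq\partial j(\bar u)$ since the subdifferential prescribes exact values on $\{\bar u\neq 0\}$ and is not a cone there, so the clean statement is membership in $\beta\partial j(\bar u)+\partial\Iuad(\bar u)$, which is again what the paper's proof actually establishes despite the missing $\beta$ in its statement.
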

	\begin{proof}
		By definition of Algorithm \ref{alg:smoothingTrForLp}, the iterates $u_k$ stay the same for $k$ large enough, i.e. $u_k=\bar u\coloneq u_{K_s}$ for all $k>K_s$, where $K_s$ denotes the index of the last successful iterate. Furthermore, the trust region update rule implies $\Delta_k\to 0$ in case of only finitely many successful steps. Let $\delta>0$. Due to $\epsilon_k \to 0$, for $k$ large enough it holds $\epsilon_k<\delta$ and on $\{|\bar u| >\delta\}$  the integrand in $j_k(u)$ is therefore independent of $\epsilon_k$ and equals $|\bar u|^p + \frac p2|\bar u|^{p-2}(u^2-\bar u^2)$. 
		Thus, $\prox_{r_0\phi_k}(\bar u-r_0 f(\bar u))-\bar u $ stays constant on $\{|\bar u| >\delta\}$ for $k$ large enough and has to be zero by Theorem \ref{tm:convHk}.  
		From \eqref{eq:formulaProxPhik} one thus obtains 
		\[
		\bar u = \prox_{r_0\phi_k}(\bar u-r_0 \nabla f(\bar u)) = \proj_{[a,b]}\left(\frac{\bar u-r_0 \nabla f(\bar u)}{1+\alpha r_0 + 2r_0\beta \psi_{\epsilon}'(\bar u^2)}\right).
		\]
		Rearranging this and using $ \psi_{\epsilon}'(\bar u^2)= \frac{p}{2}|\bar u|^{p-2}$ yields
		\begin{equation}\label{eq:proofSubdiffInclusion}
			-\nabla f(\bar u)-\alpha \bar u \,\, \begin{cases}
				\le  \,\,\,\, \beta p |\bar u|^{p-2}\bar u  \qquad &\text{ a.e. on } \{ \bar u = a \}, \\
				= \,\,\,\, \beta p |\bar u|^{p-2}\bar u \qquad &\text{ a.e. on } \{ |\bar u| > \delta \} \cap \{a<\bar u<b\}, \\
				\ge   \,\,\,\, \beta p |\bar u|^{p-2}\bar u  \qquad &\text{ a.e. on } \{ \bar u = b \}. 
			\end{cases} 
		\end{equation}
		As $\delta >0$ was arbitrary, this shows $-\nabla f(\bar u) - \alpha \bar u \in \partial \phi(\bar u) = \hat\partial\phi(\bar u)$ by Lemma \ref{lm:LpSubdiffIndicator}.
	\end{proof}

	\begin{lemma}
		Let $u_k\to \bar u$ along a subsequence with $\lim_{k\to\infty} h(u_k) = 0$. 
		Then the limit $\bar u$ satisfies the necessary optimality conditions 
        \[
        -\nabla f(\bar u) + \alpha \bar u \in \partial j(\bar u) + \partial \Iuad(\bar u)  \qquad \text{and} -\nabla f(\bar u) + \alpha \bar u \in \hat\partial j(\bar u) + \hat\partial \Iuad(\bar u) .\]
	\end{lemma}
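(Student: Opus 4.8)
The plan is to mirror the preceding lemma, but since the iterates no longer freeze at $\bar u$, I would pass to the limit pointwise a.e.\ rather than exploiting a stationary tail. First, from $h(u_k)\to 0$ along the subsequence I read off that the proximal residual $\prox_{r_0\phi_k}(u_k - r_0\nabla f(u_k)) - u_k\to 0$ in $\Ltwo$. Combining this with $u_k\to\bar u$ and the $\Ltwo$-continuity of $\nabla f$ (Assumption~\ref{ass:fPhi}, part~\ref{ass:fPhifPart}), which gives $\nabla f(u_k)\to\nabla f(\bar u)$, I would extract a further (not relabeled) subsequence along which $u_k$, $\nabla f(u_k)$, and the residual all converge pointwise a.e.\ on $\Omega$.

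Next I would insert the explicit formula \eqref{eq:formulaProxPhik} together with $\psi_{\epsilon_k}'(u_k^2) = \tfrac{p}{2}\min(\epsilon_k^{p-2}, |u_k|^{p-2})$ from Lemma~\ref{lm:psieps}, so that the residual condition becomes, pointwise a.e.,
\[
\proj_{[a,b]}\!\left(\frac{u_k - r_0\nabla f(u_k)}{1 + \alpha r_0 + r_0\beta p\min(\epsilon_k^{p-2}, |u_k|^{p-2})}\right) - u_k \to 0.
\]
The decisive point is the behavior of the weight in the denominator. On $\{\bar u\neq 0\}$ the value $|u_k|$ stays bounded away from $0$ while $\epsilon_k^{p-2}\to\infty$, so eventually the minimum equals $|u_k|^{p-2}\to|\bar u|^{p-2}$; the denominator then converges to $1 + \alpha r_0 + r_0\beta p|\bar u|^{p-2}$ and, by continuity of the pointwise projection, the displayed limit yields the fixed-point identity $\bar u = \proj_{[a,b]}\big((\bar u - r_0\nabla f(\bar u))/(1+\alpha r_0 + r_0\beta p|\bar u|^{p-2})\big)$. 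Rearranging exactly as in the derivation of \eqref{eq:proofSubdiffInclusion} produces the required (in)equalities relating $-\nabla f(\bar u) - \alpha\bar u$ to $\beta p|\bar u|^{p-2}\bar u$ on $\{\bar u = a\}$, $\{\bar u = b\}$ and $\{a<\bar u<b\}\cap\{\bar u\neq 0\}$.

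On the remaining set $\{\bar u = 0\}$ no condition needs to be imposed, since by Lemma~\ref{lm:LpSubdiffIndicator} (and Lemma~\ref{lm:LpSubdiff}) the subdifferential of $j + \Iuad$ is unconstrained there; I would only note for consistency that here $u_k\to 0$ and $\min(\epsilon_k^{p-2},|u_k|^{p-2})\to\infty$, so the denominator blows up and both terms in the residual tend to $0$ automatically. Collecting the pointwise conditions and invoking Lemma~\ref{lm:LpSubdiffIndicator}, which identifies $\partial(j+\Iuad)$ with $\hat\partial(j+\Iuad)$, then delivers both stated inclusions at once. The main obstacle I anticipate is precisely the limit of the smoothing weight $\psi_{\epsilon_k}'(u_k^2)$: unlike the frozen-iterate lemma, here $\epsilon_k\to 0$ and $u_k$ move simultaneously, and one must argue carefully, via the $\min$ structure and the a.e.\ subsequence, that the smoothing disappears exactly on $\{\bar u\neq 0\}$ to recover the genuine $L^p$-subgradient, while leaving $\{\bar u = 0\}$ unconstrained.
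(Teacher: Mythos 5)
Your proposal is correct and follows essentially the same route as the paper: both pass from $h(u_k)\to 0$ and the explicit projection formula \eqref{eq:formulaProxPhik} to pointwise a.e.\ convergence along a subsequence, observe that on $\{\bar u\neq 0\}$ the smoothing weight $\psi_{\epsilon_k}'(u_k^2)$ tends to $\tfrac{p}{2}|\bar u|^{p-2}$ since $\epsilon_k\to 0$ while $|u_k|$ stays away from zero there, and then recover \eqref{eq:proofSubdiffInclusion} and conclude via Lemma \ref{lm:LpSubdiffIndicator}. The only cosmetic difference is that the paper organizes the pointwise argument through the sets $\{|\bar u|>\delta\}$ with $\delta>0$ arbitrary, whereas you argue directly a.e.\ on $\{\bar u\neq 0\}$ via the $\min$ structure of $\psi_{\epsilon_k}'$; these are equivalent.
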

	\begin{proof}
		By assumption it holds $\prox_{r_0\phi_k}(u_k-r_0 \nabla f(u_k)) \to \bar u$ (along that subsequence). By \eqref{eq:formulaProxPhik}, it thus holds 
		\[
		\proj_{[a,b]}\left(\frac{u_k-r_0 \nabla f(u_k)}{1+\alpha r_0 + 2\beta r_0 \psi_{\epsilon_k}'(u_k^2)} \right) \to \bar u
		\]
		in $\Ltwo$. Let $\delta >0$. Then we can extract a subsequence such that 
		\[
		\proj_{[a,b]}\left( \frac{u_k-r_0 \nabla f(u_k)}{1+\alpha r_0 + 2\beta r_0 \psi_{\epsilon_k}'(u_k^2)} \right) \to \proj_{[a,b]}\left(\frac{\bar u-r_0 \nabla f(\bar u)}{1+\alpha r_0 + \beta r_0 p|\bar u|^{p-2}} \right)  
		\]
		pointwise a.e. on $\{|u|>\delta\}$.
		Combining these two results leads to \eqref{eq:proofSubdiffInclusion} as in the previous Corollary. As $\delta>0$ was arbitrary, the claim follows by Lemma \ref{lm:LpSubdiffIndicator}.
	\end{proof}
	
	Recall that under appropriate assumptions, a sequence with $\lim_{k\to\infty} h(u_k) = 0$ exists by Theorem \ref{tm:convHk}. 
	
	\begin{remark}
		Limits $\bar u$ as in the previous lemmas that satisfy $h_k(u_k)\to 0$ with $u_k\to \bar u$ are not necessarily $L$-stationary. Indeed, if for example $\bar u=0$, then $\prox_{r_0\phi_k}(u_k-r_0\nabla f(u_k)) \to 0$ for $k\to \infty$ independently of $\nabla f(u_k)$ and $\nabla f(\bar u)$. This can be seen from passing to the limit $\epsilon_k\to 0$ in \eqref{eq:formulaProxPhik} pointwise a.e. on $\Omega$, as the numerator is converging pointwise a.e. (on a subsequence) and the denominator tends to infinity. However, for $0 \in \prox_{r_0\phi}(-r_0\nabla f(\bar u))$ the absolute value of $\nabla f(\bar u)$ has to be below a certain threshold $q_0$, see \cite[Lemma 3.6]{15proxNonsmooth}.
	\end{remark}
	
	\subsubsection{\texorpdfstring{$\Hone$}{H01}-case}
	Now, let $U=\Hone$. Moreover, let $p_k \coloneqq \prox_{r_0\phi_k}(u_k -r_0 \nabla f(u_k))$ and define $\lambda_k \in \Hone^*$ by $\braket{\lambda_k,v}_{\Hone} = 2 \io \psi_{\epsilon_k}'(u_k^2)p_k v \dd x$.
	We start with the following auxiliary result similar to \cite[Lemma 7.7]{sparse}.

	\begin{lemma}\label{lm:auxLambdaPkConv}
		Assume $u_k \rightharpoonup\bar u$ and $(p_k-u_k)\to 0$ in $\Hone$.
		Then it holds 
		\[
		2\io \psi_{\epsilon_k}'(u_k^2)p_k^2 \to p \io |\bar u|^p\dd x.
		\]
	\end{lemma}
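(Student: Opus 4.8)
The plan is to split the target quantity into a ``diagonal'' part, which is handled essentially as in \cite[Lemma 7.7]{sparse}, and a cross term that measures the discrepancy between $p_k$ and $u_k$, and then to control the latter by squeezing it between two one-sided bounds rather than showing it vanishes termwise. First I would record the consequences of the hypotheses. Since $\Omega$ is bounded, the embedding $\Hone \hookrightarrow \Ltwo \hookrightarrow L^1(\Omega)$ is compact, so $u_k \rightharpoonup \bar u$ in $\Hone$ gives $u_k \to \bar u$ in $L^1(\Omega)$; together with $w_k := p_k - u_k \to 0$ in $\Hone$ this also yields $p_k \to \bar u$ in $L^1(\Omega)$. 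Abbreviating $\psi_k' := \psi_{\epsilon_k}'(u_k^2)$, I decompose
\[
2\io \psi_k' p_k^2 = 2\io \psi_k' u_k^2 + 2R_k, \qquad R_k := \io \psi_k'(p_k^2 - u_k^2).
\]

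For the diagonal term I use the elementary pointwise identity $2t\,\psi_{\epsilon}'(t) = p\,\psi_{\epsilon}(t) - c_{\epsilon}(t)$, which follows by distinguishing the two branches of $\psi_{\epsilon}$ and where the remainder satisfies $c_{\epsilon}(t) = 0$ for $t \ge \epsilon^2$ and $0 \le c_{\epsilon}(t) \le p(1-\tfrac{p}{2})\epsilon^p$ otherwise. Inserting $t = u_k^2$ gives
\[
2\io \psi_k' u_k^2 = p\, j_{\epsilon_k}(u_k) - \io c_{\epsilon_k}(u_k^2),
\]
where the last integral is bounded by $p(1-\tfrac{p}{2})\epsilon_k^p|\Omega| \to 0$, while $j_{\epsilon_k}(u_k) \to \io|\bar u|^p$ by Lemma \ref{lm:psieps} (using $u_k \to \bar u$ in $L^1$ and $\epsilon_k \to 0$). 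Hence $2\io \psi_k' u_k^2 \to p\io|\bar u|^p$.

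It then remains to show $R_k \to 0$. The key observation is that one cannot hope to prove $\io \psi_k'(p_k-u_k)^2 \to 0$ directly, because the weight $\psi_k'$ diverges like $\epsilon_k^{p-2}$ where $u_k$ is small; instead I bound $R_k$ from both sides. For the lower bound I exploit concavity of $\psi_{\epsilon_k}$ (Lemma \ref{lm:psieps}): the subgradient inequality $\psi_{\epsilon_k}(p_k^2) \le \psi_{\epsilon_k}(u_k^2) + \psi_k'(p_k^2 - u_k^2)$ integrates to $R_k \ge j_{\epsilon_k}(p_k) - j_{\epsilon_k}(u_k) \to 0$, so $\liminf_k R_k \ge 0$. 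For the upper bound I use that $p_k$ minimizes $v \mapsto \tfrac{1}{2r_0}\|v - z_k\|^2 + \phi_k(v)$ with $z_k = u_k - r_0\nabla f(u_k)$; testing minimality against $v = u_k$ and simplifying via $u_k - z_k = r_0\nabla f(u_k)$ and $p_k - z_k = w_k + r_0\nabla f(u_k)$ yields
\[
\beta R_k \le -\tfrac{1}{2r_0}\|w_k\|^2 - (w_k, \nabla f(u_k)) - \alpha(u_k, w_k) - \tfrac{\alpha}{2}\|w_k\|^2.
\]
Since $u_k$ is bounded in $\Hone$ and $\nabla f$ is Lipschitz, $\nabla f(u_k)$ is bounded, so every term on the right carries the factor $w_k \to 0$ and the right-hand side tends to $0$; thus $\limsup_k R_k \le 0$. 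Combining the two bounds gives $R_k \to 0$, and together with the diagonal limit this proves the claim.

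I expect the main obstacle to be precisely this cross term. The naive estimate $\io \psi_k'(p_k-u_k)^2 \le \epsilon_k^{p-2}\|w_k\|_{\Ltwo}^2$ is useless because the weight blows up, and neither the proximal optimality condition nor the minimality inequality alone pins $R_k$ down, since each only yields a one-sided estimate. The resolution is that the concavity inequality supplies the matching lower bound, so that the genuinely singular quantity $\io \psi_k'(p_k-u_k)^2$ never needs to be shown to vanish in isolation; only the better-behaved combination $R_k$ is forced to $0$.
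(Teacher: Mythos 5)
Your proof is correct, but it takes a route that differs from the paper's in its key mechanism. The paper tests the first-order optimality condition \eqref{eq:H1prox} with $v = p_k - u_k$, applies the identity $a(a-b)=\tfrac12(a^2-b^2+(a-b)^2)$ together with concavity of $\psi_{\epsilon_k}$, and thereby proves that the singular weighted quantity $\io \psi_{\epsilon_k}'(u_k^2)(p_k-u_k)^2 \dd x$ does tend to zero; the conclusion then follows by the decomposition and Cauchy--Schwarz argument of \cite[Lemma 7.7]{sparse}. You instead work with the zeroth-order minimality of $p_k$ (comparing the proximal objective at $p_k$ and at $u_k$) to get an upper bound on $R_k = \io \psi_{\epsilon_k}'(u_k^2)(p_k^2-u_k^2)\dd x$, pair it with the concavity lower bound $R_k \ge j_{\epsilon_k}(p_k)-j_{\epsilon_k}(u_k)$, and squeeze; the diagonal term is handled by the explicit pointwise identity $2t\,\psi_{\epsilon}'(t) = p\,\psi_{\epsilon}(t) - c_{\epsilon}(t)$ with $0 \le c_{\epsilon} \le p(1-\tfrac p2)\epsilon^p$. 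What your version buys is self-containedness: it never defers to the cited lemma and entirely avoids the delicate term $\io \psi_{\epsilon_k}'(u_k^2)(p_k-u_k)^2\dd x$. What the paper's version buys is precisely that stronger intermediate fact, which is what makes the cross term in the decomposition $p_k^2 = u_k^2 + 2u_k(p_k-u_k)+(p_k-u_k)^2$ tractable via a weighted Cauchy--Schwarz inequality. One small caveat on your closing commentary: the claim that the weighted square term cannot be pinned down is too pessimistic --- the combination of the first-order condition and concavity does force it to zero, which is exactly what the paper exploits; this does not, however, affect the validity of your argument.
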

	\begin{proof}
		The assumptions on the convergence of $u_k$ and $(p_k-u_k)$ together with uniqueness of weak limits imply $p_k\rightharpoonup \bar u$.
		As noted in \eqref{eq:H1prox}, $p_k$ satisfies
		\begin{equation*}
			\frac{1}{r_0} (p_k-u_k, v)_{\Hone} + \braket{\nabla f(u_k),v}_{\Hone} + 2\beta\io \psi_{\epsilon_k}'(u_k^2)p_k v \dd x + \alpha(p_k,v)_{\Hone} = 0 \qquad \forall v\in \Hone. 
		\end{equation*}
		As in \cite[Lemma 7.3]{sparse}, we test this equation with $(p_k - u_k)$. Using the identity $a(a-b)=\frac12(a^2-b^2+(a-b)^2)$  and concavity of $\psi_{\epsilon}$, we obtain
		\begin{multline*}
			0=\frac{1}{r_0}\|p_k-u_k\|_{\Hone}^2 + \braket{\nabla f(u_k), p_k-u_k}_{\Hone} \\ + 
			2\beta\io \psi_{\epsilon_k}'(u_k^2)p_k (p_k-u_k) \dd x + \alpha(p_k,p_k-u_k)_{\Hone} \\
			= \frac{1}{r_0}\|p_k-u_k\|_{\Hone}^2 + \braket{\nabla f(u_k), p_k-u_k}_{\Hone} \\ +
			\beta\io \psi_{\epsilon_k}'(u_k^2)(p_k^2 -u_k^2 + (p_k-u_k)^2) \dd x + \alpha(p_k,p_k-u_k)_{\Hone} \\
			\ge \frac{1}{r_0}\|p_k-u_k\|_{\Hone}^2 + \braket{\nabla f(u_k), p_k-u_k}_{\Hone} \\ +
			\beta\io \psi_{\epsilon_k}(p_k^2)- \psi_{\epsilon_k}(u_k^2) +\psi_{\epsilon_k}'(u_k^2) (p_k-u_k)^2 \dd x + \alpha(p_k,p_k-u_k)_{\Hone}.
		\end{multline*}
		By Assumption \ref{ass:fPhi} and boundedness of $(u_k)$, the sequence $(\nabla f(u_k))$ is bounded. Therefore, the assumptions imply that all the terms on the right hand side apart from $\io \psi_{\epsilon_k}'(u_k^2) (p_k-u_k)^2 \dd x$ are converging to zero. Here we have used the compact embedding of $\Hone$ into $\Ltwo$ and Lemma \ref{lm:psieps} for the part $\io \psi_{\epsilon_k}(p_k^2)- \psi_{\epsilon_k}(u_k^2) \dd x$. By nonnegativity of the remaining term, we thus obtain 
		\[
		\io \psi_{\epsilon_k}'(u_k^2)(p_k-u_k)^2 \dd x \to 0.
		\]
		This enables us to proceed as in \cite[Lemma 7.7]{sparse} and obtain 
		\[
		2\io \psi_{\epsilon_k}'(u_k^2)p_k^2 \to p \io |\bar u|^p\dd x.
		\]
	\end{proof}
	
	Analogously to the $\Ltwo$ case, we consider the case of only finitely many successful iterations in the trust-region algorithm.
	\begin{lemma}
		Let the assumptions of Theorem \ref{tm:convHk} be satisfied and assume there are only finitely many successful iterations.
		Then the iterates $u_k$ are all equal to $\bar u$ for $k$ large enough, and $\bar u$ satisfies the necessary optimality condition 
		\begin{equation}\label{eq:ocH1Part1}
			\alpha (\bar u,v)_{\Hone} + \beta \braket{\bar\lambda,v}_{\Hone} = -f'(\bar u) v \qquad \forall v\in U
		\end{equation}
		with 
		\begin{equation}\label{eq:ocH1FinPart2}
			\braket{\bar \lambda,\bar u} = p \io |\bar u|^p \dd x.
		\end{equation}
	\end{lemma}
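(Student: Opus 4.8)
The plan is to mirror the structure of the corresponding $\Ltwo$ lemma, but to replace its pointwise/sparsity arguments (which are unavailable in $\Hone$) by the weak-$*$ limit machinery provided by Lemma \ref{lm:auxLambdaPkConv}. The first assertion is immediate from the algorithm: if only finitely many iterations are successful, the iterate is never updated after the last successful index $K_s$, so $u_k = \bar u := u_{K_s}$ for all $k > K_s$. In particular $u_k \to \bar u$ strongly, hence $u_k \rightharpoonup \bar u$, in $\Hone$.

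By the second conclusion of Theorem \ref{tm:convHk} we have $\liminf_{k\to\infty} h(u_k) = 0$, where $h(u_k) = \frac{1}{r_0}\|p_k - u_k\|_{\Hone}$. Thus along a subsequence (not relabeled) $p_k - u_k \to 0$ in $\Hone$, and since $u_k \equiv \bar u$ this also gives $p_k \to \bar u$ strongly. The hypotheses of Lemma \ref{lm:auxLambdaPkConv} are therefore met, and I obtain
\[
\braket{\lambda_k, p_k}_{\Hone} = 2\io \psi_{\epsilon_k}'(u_k^2) p_k^2 \dd x \to p\io |\bar u|^p \dd x.
\]

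Next I would pass to the limit in the first-order condition \eqref{eq:H1prox} characterizing $p_k = \prox_{r_0\phi_k}(u_k - r_0\nabla f(u_k))$, namely
\[
\frac{1}{r_0}(p_k - u_k, v)_{\Hone} + \braket{\nabla f(u_k), v}_{\Hone} + \beta\braket{\lambda_k, v}_{\Hone} + \alpha(p_k, v)_{\Hone} = 0 \qquad \forall v\in \Hone.
\]
Solving this for $\beta\braket{\lambda_k, v}_{\Hone}$ and using $p_k - u_k \to 0$, $\nabla f(u_k) = \nabla f(\bar u)$ fixed and $p_k \to \bar u$ shows that $(\lambda_k)$ is bounded in $\Hone^*$; I then extract a weak-$*$ convergent subsequence $\lambda_k \rightharpoonup^{*} \bar\lambda$. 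Letting $k\to\infty$ in the identity above yields $\alpha(\bar u, v)_{\Hone} + \beta\braket{\bar\lambda, v}_{\Hone} = -\braket{\nabla f(\bar u), v}_{\Hone} = -f'(\bar u) v$ for all $v$, which is \eqref{eq:ocH1Part1}. For \eqref{eq:ocH1FinPart2}, I combine the weak-$*$ convergence $\lambda_k \rightharpoonup^{*} \bar\lambda$ with the strong convergence $p_k \to \bar u$ to conclude $\braket{\lambda_k, p_k}_{\Hone} \to \braket{\bar\lambda, \bar u}$; comparing with the displayed limit gives $\braket{\bar\lambda, \bar u} = p\io |\bar u|^p \dd x$.

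The main obstacle is the nonlinear multiplier term $2\beta\io \psi_{\epsilon_k}'(u_k^2) p_k v \dd x = \beta\braket{\lambda_k, v}_{\Hone}$: since no pointwise or sparsity structure is available in $\Hone$ (as noted in the remark following Lemma \ref{lm:sparsityProxLp}), I cannot pass to the limit term-by-term as in the $\Ltwo$ case, and in particular the identity \eqref{eq:ocH1FinPart2} cannot be recovered from weak convergence alone. This is precisely what Lemma \ref{lm:auxLambdaPkConv} supplies, via testing the optimality condition with $p_k - u_k$ and exploiting concavity of $\psi_{\epsilon}$; the only remaining care is to justify the weak-$*$/strong pairing $\braket{\lambda_k, p_k}_{\Hone} \to \braket{\bar\lambda, \bar u}$.
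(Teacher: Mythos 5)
Your proof is correct and follows essentially the same route as the paper's: constancy of the iterates after the last successful step, convergence $p_k \to \bar u$ from Theorem \ref{tm:convHk}, passing to the limit in \eqref{eq:H1prox} for \eqref{eq:ocH1Part1}, and Lemma \ref{lm:auxLambdaPkConv} paired with $\braket{\lambda_k,p_k}_{\Hone} \to \braket{\bar\lambda,\bar u}_{\Hone}$ for \eqref{eq:ocH1FinPart2}. The only cosmetic difference is that you settle for boundedness and weak-$*$ convergence of $(\lambda_k)$, whereas the paper observes that $\beta\lambda_k$ equals a sum of strongly convergent terms and hence converges strongly in $\Hone^*$, which makes the final pairing argument immediate.
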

	\begin{proof}
		If there are only finitely many successful iterations, it holds $u_k = \bar u$ for $k$ large enough.
		Theorem \ref{tm:convHk} yields $p_k = \prox_{r_0\phi_k}(\bar u-r_0 \nabla f(\bar u)) \to \bar u$ in $\Hone$ along a subsequence. By \eqref{eq:H1prox}, it holds
		\[
		\frac{1}{r_0} (p_k-\bar u, v)_{\Hone} + \braket{\nabla f(\bar u),v}_{\Hone} + 2\beta\io \psi_{\epsilon_k}'(\bar u^2)p_k v \dd x + \alpha(p_k,v)_{\Hone} = 0 \qquad \forall v\in \Hone. 
		\]
		Convergence of the remaining terms implies $\lambda_k\to \bar \lambda$ in $\Hone^*$ for some $\bar\lambda$ in $\Hone^*$. Passing to the limit in the previous equation yields \eqref{eq:ocH1Part1}. The second part \eqref{eq:ocH1FinPart2} follows from Lemma \ref{lm:auxLambdaPkConv} and $\braket{\lambda_k,p_k}_{\Hone} \to \braket{\bar\lambda,\bar u}_{\Hone}$ by strong convergence of $p_k$ and $\lambda_k$.
	\end{proof}

	\begin{lemma}
		Assume that $\nabla f$ is completely continuous. Let $\bar u$ be a weak accumulation point of a subsequence satisfying $h_k(u_k) \to 0$. Then $\bar u$ satisfies the necessary optimality condition 
		\begin{equation}\label{eq:ocH1WeakPart1}
			\alpha (\bar u,v)_{\Hone} + \beta \braket{\bar\lambda,v}_{\Hone} = -f'(\bar u) v \qquad \forall v\in U
		\end{equation}
		with 
		\begin{equation}\label{eq:ocH1WeakPart2}
			\braket{\bar \lambda,\bar u} \ge p \io |\bar u|^p \dd x.
		\end{equation}
	\end{lemma}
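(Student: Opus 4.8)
The plan is to mirror the proof of the finitely-many-iterations lemma, but to replace the strong convergence used there by weak convergence; this is exactly why \eqref{eq:ocH1WeakPart2} will only be an inequality. Set $p_k \coloneqq \prox_{r_0\phi_k}(u_k - r_0\nabla f(u_k))$ and take $\lambda_k$ as defined before Lemma \ref{lm:auxLambdaPkConv}. Since $h_k(u_k)\to 0$ by assumption, we have $p_k - u_k\to 0$ in $\Hone$, and passing to the subsequence along which $u_k\rightharpoonup\bar u$, uniqueness of weak limits yields $p_k\rightharpoonup\bar u$ in $\Hone$. Thus the hypotheses of Lemma \ref{lm:auxLambdaPkConv} are satisfied.

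First I would derive \eqref{eq:ocH1WeakPart1}. Writing the first-order condition \eqref{eq:H1prox} that characterizes $p_k$ as
\[
\frac1{r_0}(p_k - u_k, v)_{\Hone} + \braket{\nabla f(u_k), v}_{\Hone} + \beta\braket{\lambda_k, v}_{\Hone} + \alpha(p_k, v)_{\Hone} = 0 \qquad \forall v\in\Hone,
\]
I would pass to the limit for fixed $v$. The first term vanishes since $p_k - u_k\to 0$; the second converges to $f'(\bar u)v$ because complete continuity of $\nabla f$ turns $u_k\rightharpoonup\bar u$ into $\nabla f(u_k)\to\nabla f(\bar u)$ strongly; the last converges to $\alpha(\bar u, v)_{\Hone}$ by $p_k\rightharpoonup\bar u$. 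Hence $\beta\braket{\lambda_k, v}_{\Hone}\to -f'(\bar u)v - \alpha(\bar u, v)_{\Hone}$ for every $v$, and the right-hand side is a bounded linear functional of $v$, so $\lambda_k\rightharpoonup\bar\lambda$ in $\Hone^*$ for the $\bar\lambda$ satisfying \eqref{eq:ocH1WeakPart1}.

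For the inequality \eqref{eq:ocH1WeakPart2}, I would instead test the $p_k$-equation with $p_k$ itself, obtaining
\[
\frac1{r_0}(p_k - u_k, p_k)_{\Hone} + \braket{\nabla f(u_k), p_k}_{\Hone} + \beta\braket{\lambda_k, p_k}_{\Hone} + \alpha\|p_k\|_{\Hone}^2 = 0.
\]
By Lemma \ref{lm:auxLambdaPkConv}, $\braket{\lambda_k, p_k}_{\Hone} = 2\io\psi_{\epsilon_k}'(u_k^2)p_k^2\,\dd x\to p\io|\bar u|^p\,\dd x$; the first term vanishes and the second converges to $f'(\bar u)\bar u$ (strong times weak). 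Solving for $\alpha\|p_k\|_{\Hone}^2$ then shows that this quantity converges, with limit $-f'(\bar u)\bar u - \beta p\io|\bar u|^p\,\dd x$. The decisive step is now weak lower semicontinuity of the norm, which gives $\alpha\|\bar u\|_{\Hone}^2 \le \liminf_k\alpha\|p_k\|_{\Hone}^2 = -f'(\bar u)\bar u - \beta p\io|\bar u|^p\,\dd x$. Comparing this with \eqref{eq:ocH1WeakPart1} tested against $v = \bar u$, namely $\alpha\|\bar u\|_{\Hone}^2 + \beta\braket{\bar\lambda, \bar u} = -f'(\bar u)\bar u$, and cancelling the common terms yields $\beta\braket{\bar\lambda, \bar u}\ge \beta p\io|\bar u|^p\,\dd x$, i.e. \eqref{eq:ocH1WeakPart2}.

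The main obstacle is precisely the term $\alpha\|p_k\|_{\Hone}^2$: in the finitely-many-iterations case one has $p_k\to\bar u$ strongly, so this term converges to $\alpha\|\bar u\|_{\Hone}^2$ and \eqref{eq:ocH1FinPart2} holds with equality, whereas here only weak convergence is available and one can bound $\|\bar u\|_{\Hone}^2$ from above by the limit of $\|p_k\|_{\Hone}^2$, producing the inequality. A secondary point requiring care is to verify that \emph{every} term other than $\alpha\|p_k\|_{\Hone}^2$ converges (not merely along sub-subsequences or as $\liminf$), so that the norm term is forced to converge and the lower-semicontinuity estimate applies cleanly; this rests on the complete continuity of $\nabla f$ and on Lemma \ref{lm:auxLambdaPkConv}, which absorbs the singular weight $\psi_{\epsilon_k}'(u_k^2)$.
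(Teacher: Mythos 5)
Your proposal is correct and follows essentially the same route as the paper's proof: pass to the limit in the first-order condition \eqref{eq:H1prox} using complete continuity of $\nabla f$ to obtain \eqref{eq:ocH1WeakPart1} and the weak convergence $\lambda_k \rightharpoonup \bar\lambda$, then test the equation with $p_k$, invoke Lemma \ref{lm:auxLambdaPkConv} for the $\braket{\lambda_k,p_k}_{\Hone}$ term, and use weak lower semicontinuity of the $\Hone$-norm to get the inequality \eqref{eq:ocH1WeakPart2}. Your added observation that $\alpha\|p_k\|_{\Hone}^2$ actually converges (rather than just taking a $\liminf$ of the whole left-hand side, as the paper does) is a harmless rephrasing of the same argument.
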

	\begin{proof}
		Let us choose a subsequence satisfying the assumption, still denoted by $(u_k)$, with $u_k \rightharpoonup \bar u$ in $\Hone$. Then it also holds $p_k \coloneqq \prox_{r_0\phi_k}(u_k-r_0 \nabla f(u_k)) \rightharpoonup \bar u$ in $\Hone$. As noted in \eqref{eq:H1prox}, $p_k$ satisfies
		\begin{equation*}
			\frac{1}{r_0} (p_k-u_k, v)_{\Hone} + \braket{\nabla f(u_k),v}_{\Hone} + 2\beta\io \psi_{\epsilon_k}'(u_k^2)p_k v \dd x + \alpha(p_k,v)_{\Hone} = 0 \quad \forall v\in \Hone. 
		\end{equation*}
		Since all the remaining terms are converging (weakly) by complete continuity of $\nabla f$, there exists $\bar \lambda$ such that $\lambda_k  \rightharpoonup \bar \lambda$ in $\Hone^*$.
		Testing the equation with $p_k$  as in the proof of \cite[Theorem 7.8]{sparse} yields 
		\[
		\frac{1}{r_0} (p_k-u_k, p_k)_{\Hone} + \braket{\nabla f(u_k),p_k}_{\Hone} + \alpha(p_k,p_k)_{\Hone} = - \beta \braket{\lambda_k,p_k}_{\Hone}.
		\]
		Passing to the limit inferior on the left hand side and to the limit on the right hand side using Lemma \ref{lm:auxLambdaPkConv} leads to
		\[
		\braket{\nabla f(\bar u),\bar u}_{\Hone} + \alpha\|\bar u\|_{\Hone}^2 \le  - \beta p \io |\bar u |^p\dd x.
		\]
		By \eqref{eq:ocH1WeakPart1}, the left hand side equals $-\beta \braket{\bar \lambda,\bar u}_{\Hone}$, which implies \eqref{eq:ocH1WeakPart2}.
	\end{proof}
	Note that as we are only working with weakly convergent subsequence of $(u_k)$, we obtain only an inequality in \eqref{eq:ocH1WeakPart2}. Strong accumulation points satisfy the stronger optimality condition from Theorem \ref{tm:convHk}.
	
	\section{Generalized Cauchy points}\label{sec:GCP}
		To compute trial steps in step 4 of Algorithm \ref{alg:smoothingTrForLp} that satisfy the FCD condition \eqref{eq:fcdCond}, we follow a proximal gradient path for the convex model $m_k$ to obtain a generalized Cauchy point as done in \cite{240proxTrustRegion}.
	We abbreviate the proximal gradient paths for the convex and nonconvex case by 
	\[
	p_k(r) \coloneq \prox_{r\phi_k}(u_k - r g_k)-u_k \qquad \text{and} \qquad p_{k}^{\epsilon}(r) \coloneq \prox_{r\phi_{\epsilon}}(u_k - r g_k)-u_k
	\]
	and the associated trial iterates by
	\[
	u_k(r) \coloneq  \prox_{r\phi_k}(u_k - r g_k) \qquad \text{and} \qquad  u_{k}^{\epsilon}(r) \coloneq  \prox_{r\phi_{k}^{\epsilon}}(u_k - r g_k).
	\]
	Furthermore, let 
	\[
	Q_k^{(\epsilon)}(r) \coloneqq  \braket{g_k, p_k^{(\epsilon)}(r)} + \phi_{k/\epsilon_k}(u_k^{\epsilon}(r)) - \phi_{k/\epsilon_k}(u_k^{(\epsilon)}).
	\]
	A generalized Cauchy point (GCP) or a nonconvex generalized Cauchy point (NCGCP) is a point $u_k(t_k^A)$ or $u_k^{\epsilon}(t_k^A)$ for which the step length $t_k^A$ satisfies 
	\begin{subequations}\label{eq:gcpCond}
		\begin{align} 
			m_k^{(\epsilon)}(u_k^{(\epsilon)}(t_k^A))-m_k^{(\epsilon)}(u_k) &\le \mu_1 Q_k^{(\epsilon)}(t_k^A), \label{eq:cpDesc} \\
			\|p_k^{(\epsilon)}(t_k^A)\|&\le \nu_1 \Delta_k \label{eq:cpTr}
		\end{align}
		and 
		\begin{equation} \label{eq:cpLowerCondTka}
			t_k^A \ge \nu_2 t_k^B \qquad \text{ or } \qquad
			t_k^A \ge \nu_3. 
		\end{equation}
		Here, $t_k^B$ satisfies 
		\begin{equation}
			m_k^{(\epsilon)}(u_k^{(\epsilon)}(t_k^B))-m_k^{(\epsilon)}(u_k) \ge \mu_2 Q_k^{(\epsilon)}(t_k^B)
		\end{equation} 
		or 
		\begin{equation}
			\|p_k^{(\epsilon)}(t_k^B)\| \ge \nu_4 \Delta_k.
		\end{equation}
	\end{subequations}
	
	The parameters are chosen such that
	\begin{align*}
		0 <\mu_1&<\mu_2 < 1, \\
		0<\nu_4<&\nu_1  \le \kappa_{\text{rad}}, \\
		0<\nu_2<1 \qquad &\text{and} \qquad 0<\nu_3.
	\end{align*}
	
	We compute such a generalized Cauchy point via a bidirectional line search along the generalized Cauchy path as described in \cite[Algorithm 2]{240proxTrustRegion}. The latter algorithm is restated here as Algorithm \ref{alg:genCP}. 
	
	\begin{algorithm} 
		\caption{Generalized Cauchy Point Algorithm}\label{alg:genCP}
		\begin{algorithmic}[1]
			\Require Previous step length $t_{k-1}^A>0$ if $k\ge1$ or initial step length $t_0^A>0$ if $k=0$, parameters $0<\beta_{\text{dec}}< 1 < \beta_{\text{inc}}$ and $M_{\text{inc}}\in \N$
			\If {conditions \eqref{eq:cpDesc} and \eqref{eq:cpTr} are satisfied}
			\State Set $M_k^{\text{inc}} = \max(M_{\text{inc}}, \lceil \log_{\beta_{\text{inc}}}(t_0^A/t_{k-1}^A) \rceil )$
			\State Compute the largest $l\in \{0,..., M_k^{\text{inc}}\}$ such that $t_k^A=t_{k-1}^A\beta_{\text{inc}}^l$ satisfies  \eqref{eq:cpDesc} and \eqref{eq:cpTr}
			\Else 
			\State Compute the smallest $l \in \N$ such that $t_k^A = t_{k-1}^A\beta_{\text{dec}}^l$ satisfies  \eqref{eq:cpDesc} and \eqref{eq:cpTr}
			\EndIf
		\end{algorithmic}
	\end{algorithm}
	
	When working with the convex model $m_k$, one can directly use results from \cite{240proxTrustRegion}.
	\begin{corollary}\label{cor:gcpAlgoTerminates}
		Algorithm \ref{alg:genCP} terminates in finitely many iterations with an iterate satisfying the FCD condition \eqref{eq:fcdCond} with model $m_k$.
	\end{corollary}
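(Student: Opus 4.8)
The plan is to reduce the corollary entirely to the convex theory of \cite{240proxTrustRegion}. The decisive observation is that the model $m_k = f_k + \phi_k$ is a \emph{convex} composite model: by Assumption \ref{ass:inexactGradient} the smooth part $f_k$ is $L_k$-smooth, and $\phi_k$ is proper, lower semicontinuous and convex. Since Algorithm \ref{alg:genCP} is literally the bidirectional line search of \cite[Algorithm 2]{240proxTrustRegion} applied to $m_k$, both assertions — finite termination, and that the returned point satisfies the FCD condition \eqref{eq:fcdCond} with model $m_k$ — then follow from the corresponding statements in that reference without modification. This is precisely why the result is phrased as a corollary.

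The one point genuinely requiring verification is the convexity of $\phi_k$. Writing out \eqref{eq:defPhik},
\[
\phi_k(u) = \beta \io \left[ \psi_{\epsilon_k}(u_k^2) + \psi_{\epsilon_k}'(u_k^2)(u^2 - u_k^2) \right] \dd x + \frac{\alpha}{2}\|u\|^2 + \Iuad(u),
\]
one sees that the only $u$-dependent contribution of $j_k$ is the term $\io \psi_{\epsilon_k}'(u_k^2)\, u^2 \dd x$, a weighted squared $L^2$-norm whose weights $\psi_{\epsilon_k}'(u_k^2)$ are nonnegative by the explicit formula for $\psi_\epsilon'$ in Lemma \ref{lm:psieps}. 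Hence $j_k$ is, up to an additive constant, a convex quadratic; together with the convex term $\frac{\alpha}{2}\|u\|^2$ (recall $\alpha \ge 0$) and the indicator $\Iuad$ of a convex set, this shows $\phi_k$ is convex, and it is proper and lower semicontinuous as the sum of a continuous quadratic and a lower semicontinuous indicator that is finite on $\Uad$.

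With convexity established I would invoke the two pillars of the convex GCP analysis. Finite termination of the line search rests on continuity of the proximal path $r \mapsto \prox_{r\phi_k}(u_k - r g_k)$ in the convex case (\cite[Lemma 3]{240proxTrustRegion}), which guarantees that a step length satisfying the GCP conditions \eqref{eq:gcpCond} is located after finitely many geometric increases or decreases, exactly as in the termination argument accompanying \cite[Algorithm 2]{240proxTrustRegion}. That the resulting generalized Cauchy point then satisfies \eqref{eq:fcdCond} is the main GCP decrease estimate of \cite{240proxTrustRegion}, which I would quote directly since it uses no property of $\phi_k$ beyond the three just verified.

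I do not anticipate a substantial obstacle here: because $\phi_k$ was constructed precisely to be a convex surrogate, the entire convex machinery transfers verbatim. The only real care is bookkeeping — confirming that the convex GCP decrease estimate relies on nothing more than properness, lower semicontinuity and convexity of $\phi_k$ (and $L_k$-smoothness of $f_k$), and that the constants $\kappa_{\text{fcd}}$ and $\kappa_{\text{rad}}$ it produces match the uniform ones fixed in Algorithm \ref{alg:smoothingTrForLp}.
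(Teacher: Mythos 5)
Your proposal is correct and follows essentially the same route as the paper, whose proof is precisely the combination of \cite[Corollary 1]{240proxTrustRegion} (finite termination) and \cite[Theorem 2]{240proxTrustRegion} (FCD decrease), justified by the fact that $m_k$ is a convex composite model. Your explicit verification that $\phi_k$ is proper, lower semicontinuous and convex (via nonnegativity of $\psi_{\epsilon_k}'$) is a detail the paper states elsewhere without proof, so it is a welcome but not divergent addition.
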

	\begin{proof}
		This combines \cite[Corollary 1]{240proxTrustRegion} and \cite[Theorem 2]{240proxTrustRegion}.
	\end{proof}
	
	We remark that it would be sufficient to compute a trial iterate that reduces the predicted reduction by at least a fraction of the one obtained by the Cauchy point as described in \cite[section 3.1]{240proxTrustRegion}. \\
	Equations \eqref{eq:formulaProxPhik} and \eqref{eq:H1prox} provide an efficient way to compute proximal points for the convex case.
	With Algorithm \ref{alg:genCP}, we can therefore compute a trial step that satisfies the FCD condition \eqref{eq:fcdCond} efficiently for the convex model.\\ 
	Next, we investigate the use of Algorithm \ref{alg:genCP} to compute NCGCPs in the $\Ltwo$-setting.

	We first prove the following auxiliary result.
	
	\begin{lemma} \label{lm:qkInequ}
		The function $r\mapsto Q_{k}^{\epsilon}(r)$ is nonincreasing. Furthermore, it holds 
		\begin{equation}\label{eq:qkEstimate}
			Q_{k}^{\epsilon}(r) = (g_k, p_{k}^{\epsilon}(r) ) + \phi_{\epsilon}(u_{k}^{\epsilon}(r)) -\phi_{\epsilon}(u_k) \le - \frac12 \Phi_{\epsilon}(r) \Psi_{\epsilon}(r). 
		\end{equation}
	\end{lemma}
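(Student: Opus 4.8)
The plan is to derive the two assertions separately, both from Lemma~\ref{lm:phiPsi} applied to the proper, lower semicontinuous function $\tilde\phi=\phi_{\epsilon}$ with base point $u=u_k$ and direction $d=-g_k$. With these choices $\prox_{r\phi_{\epsilon}}(u_k-rg_k)=u_k^{\epsilon}(r)$, so that $\Phi_{\epsilon}(r)=\|p_k^{\epsilon}(r)\|=\|u_k^{\epsilon}(r)-u_k\|$ and $\Psi_{\epsilon}(r)=\Phi_{\epsilon}(r)/r$ are precisely the instances of $\Phi,\Psi$ from that lemma in the present setting. Note that although $\prox_{r\phi_{\epsilon}}$ is set-valued here, every selected value is a \emph{global} minimizer of the proximal subproblem, which is all the argument uses.

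The inequality \eqref{eq:qkEstimate} is then immediate. Specializing part~2 of Lemma~\ref{lm:phiPsi} yields
\[
(-g_k,\,p_k^{\epsilon}(r))+\phi_{\epsilon}(u_k)-\phi_{\epsilon}(u_k^{\epsilon}(r))\ge \tfrac12\Psi_{\epsilon}(r)\Phi_{\epsilon}(r).
\]
Since the left-hand side equals $-Q_k^{\epsilon}(r)$ by the definition of $Q_k^{\epsilon}$, multiplying by $-1$ gives $Q_k^{\epsilon}(r)\le-\tfrac12\Phi_{\epsilon}(r)\Psi_{\epsilon}(r)$, which is \eqref{eq:qkEstimate}.

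For the monotonicity I would first rewrite the proximal point as a minimizer: completing the square in the definition of $\prox_{r\phi_{\epsilon}}(u_k-rg_k)$ shows that $u_k^{\epsilon}(r)$ globally minimizes
\[
\Theta_r(v):=(g_k,v-u_k)+\phi_{\epsilon}(v)+\tfrac{1}{2r}\|v-u_k\|^2 .
\]
Writing $A(r):=(g_k,u_k^{\epsilon}(r)-u_k)+\phi_{\epsilon}(u_k^{\epsilon}(r))$, so that $Q_k^{\epsilon}(r)=A(r)-\phi_{\epsilon}(u_k)$ and $\Theta_r(u_k^{\epsilon}(r))=A(r)+\tfrac{1}{2r}\Phi_{\epsilon}(r)^2$, I compare, for $r_1>r_2$, the minimizer of $\Theta_{r_1}$ against the competitor $u_k^{\epsilon}(r_2)$. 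Global optimality gives $\Theta_{r_1}(u_k^{\epsilon}(r_1))\le\Theta_{r_1}(u_k^{\epsilon}(r_2))$, i.e.
\[
A(r_1)+\tfrac{1}{2r_1}\Phi_{\epsilon}(r_1)^2\le A(r_2)+\tfrac{1}{2r_1}\Phi_{\epsilon}(r_2)^2 ,
\]
whence $A(r_1)-A(r_2)\le\tfrac{1}{2r_1}\bigl(\Phi_{\epsilon}(r_2)^2-\Phi_{\epsilon}(r_1)^2\bigr)$. Since part~1 of Lemma~\ref{lm:phiPsi} states that $\Phi_{\epsilon}$ is nondecreasing, we have $\Phi_{\epsilon}(r_2)\le\Phi_{\epsilon}(r_1)$, so the right-hand side is nonpositive; hence $A(r_1)\le A(r_2)$ and $Q_k^{\epsilon}(r_1)\le Q_k^{\epsilon}(r_2)$, establishing that $Q_k^{\epsilon}$ is nonincreasing.

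The main obstacle is purely the nonconvexity: because $\prox_{r\phi_{\epsilon}}$ is set-valued and, as noted in the remark following Lemma~\ref{lm:phiPsi}, $r\mapsto\Psi_{\epsilon}(r)$ need no longer be monotone and $r\mapsto u_k^{\epsilon}(r)$ need no longer be continuous, one cannot lift the monotonicity of $Q_k^{\epsilon}$ from continuity of the path as in the convex case. The argument must therefore rest only on the two features that survive the loss of convexity, namely the global-minimality of each selected proximal point (used in the $\Theta_{r_1}$ comparison) and the monotonicity of $\Phi_{\epsilon}$, which Lemma~\ref{lm:phiPsi} provides for arbitrary proper lower semicontinuous $\tilde\phi$. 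Care is needed only to keep the same selection $u_k^{\epsilon}(r)$ in $Q_k^{\epsilon}$, in $\Phi_{\epsilon}$, and in the variational inequalities, but since all the relevant inequalities hold for any selection this causes no difficulty.
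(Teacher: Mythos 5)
Your proof is correct. For the inequality \eqref{eq:qkEstimate} you do exactly what the paper does: specialize part 2 of Lemma \ref{lm:phiPsi} to $\tilde\phi=\phi_{\epsilon}$, $u=u_k$, $d=-g_k$, and flip the sign. For the monotonicity of $Q_k^{\epsilon}$, however, your route genuinely differs. The paper applies Lemma \ref{lm:lm1proxIneq} twice, symmetrically --- once at parameter $r$ with competitor $u_k^{\epsilon}(t)$, once at parameter $t$ with competitor $u_k^{\epsilon}(r)$ --- and adds the two inequalities; the quadratic terms cancel exactly and the inner products collapse to $(t-r)\bigl(Q_k^{\epsilon}(r)-Q_k^{\epsilon}(t)\bigr)\ge 0$, giving monotonicity directly with no reference to $\Phi_{\epsilon}$. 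You instead use a one-sided comparison: global optimality of $u_k^{\epsilon}(r_1)$ for the prox-linearized objective $\Theta_{r_1}$ against the competitor $u_k^{\epsilon}(r_2)$, and then invoke part 1 of Lemma \ref{lm:phiPsi} (monotonicity of $\Phi_{\epsilon}$) to make the residual term $\tfrac{1}{2r_1}\bigl(\Phi_{\epsilon}(r_2)^2-\Phi_{\epsilon}(r_1)^2\bigr)$ nonpositive. Both arguments hold for arbitrary selections of the set-valued prox, which you correctly identify as the only point needing care, since every selection is a global minimizer of the proximal subproblem. What each approach buys: the paper's symmetrization is self-contained modulo Lemma \ref{lm:lm1proxIneq}; yours spends only one variational inequality but borrows the monotonicity of $\Phi_{\epsilon}$, whose own proof in the paper rests on the very same symmetric two-inequality trick --- so the underlying mechanism coincides, merely repackaged through part 1 of Lemma \ref{lm:phiPsi} rather than re-derived in place.
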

	Here, $\Phi_{\epsilon}$ and $\Psi_{\epsilon}$ are defined as in Lemma \ref{lm:phiPsi} with the function $\phi_{\epsilon}$.
	\begin{proof}
		We proceed as in the proof of \cite[Lemma 7]{240proxTrustRegion}. To prove that $Q_{k}^{\epsilon}(r)$ is nonincreasing, let $r,t>0$. Using Lemma \ref{lm:lm1proxIneq} with $u=u_k-r g_k$ and $v = u_{k}^{\epsilon}(t)$, it holds by definition of $u_{k}^{\epsilon}(r)$ that
		\[
		(u_{k}^{\epsilon}(r)-(u_k-rg_k), u_{k}^{\epsilon}(t)-u_{k}^{\epsilon}(r)) + \frac{1}{2} \| u_{k}^{\epsilon}(r) - u_{k}^{\epsilon}(t)\|^2 \ge r \phi_{\epsilon}(u_{k}^{\epsilon}(r)) - r \phi_{\epsilon}(u_{k}^{\epsilon}(t))
		\]
		and with the roles of $r$ and $t$ reversed that
		\[
		(u_{k}^{\epsilon}(t)-(u_k-tg_k), u_{k}^{\epsilon}(r)-u_{k}^{\epsilon}(t)) + \frac{1}{2} \| u_{k}^{\epsilon}(t) - u_{k}^{\epsilon}(r)\|^2 \ge t \phi_{\epsilon}(u_{k}^{\epsilon}(t)) - t \phi_{\epsilon}(u_{k}^{\epsilon}(r)).
		\]
		Adding these two inequalities yields
		\[
		0 \le (t-r) \left[ (u_{k}^{\epsilon}(r)-u_{k}^{\epsilon}(t), g_k) + \phi_{\epsilon}(u_{k}^{\epsilon}(r)-\phi_{\epsilon}(u_{k}^{\epsilon}(t))) \right] = (t-r) (Q_{k}^{\epsilon}(r) - Q_{k}^{\epsilon}(t) ),
		\]
		which shows the claim. \\
		Inequality \eqref{eq:qkEstimate} follows from part 2 of Lemma \ref{lm:phiPsi} with $d=-g_k$.
	\end{proof}
	
	Now we are able to generalize Corollary \ref{cor:gcpAlgoTerminates} to the nonconvex $L^p$-setting.
	\begin{lemma}\label{lm:fcdResult}
		Let $U=\Ltwo$. For the nonconvex function $\phi_{\epsilon}$ and also for the non-smoothed function $\phi$ defined in \eqref{eq:defPhi}, Algorithm \ref{alg:genCP} terminates in finitely many iterations with a NCGCP.
	\end{lemma}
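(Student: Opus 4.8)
The plan is to mirror the convex argument behind Corollary \ref{cor:gcpAlgoTerminates} (i.e. \cite[Corollary 1, Theorem 2]{240proxTrustRegion}), replacing the two ingredients that break in the nonconvex regime — the continuity of the proximal path and the monotonicity of $\Psi$ — by the uniform curvature bound $|\omega(f_k,u_k,s)|\le L_k$ and the one-sided estimate from Lemma \ref{lm:qkInequ}. The starting point is the purely algebraic decomposition obtained from the curvature definition \eqref{eq:defCurvature} with $s=p_k^{\epsilon}(r)$, namely
\[
m_k^{\epsilon}(u_k^{\epsilon}(r))-m_k^{\epsilon}(u_k) = Q_k^{\epsilon}(r) + \tfrac12\,\omega(f_k,u_k,p_k^{\epsilon}(r))\,\|p_k^{\epsilon}(r)\|^2,
\]
which holds for every selected proximal element since $m_k^{\epsilon}=f_k+\phi_{\epsilon}$ and $Q_k^{\epsilon}(r)=(g_k,p_k^{\epsilon}(r))+\phi_{\epsilon}(u_k^{\epsilon}(r))-\phi_{\epsilon}(u_k)$.

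First I would show that the two acceptance conditions \eqref{eq:cpDesc} and \eqref{eq:cpTr} hold for all sufficiently small $r$. For \eqref{eq:cpDesc}, combine the decomposition with $\omega\le L_k$ and the bound $Q_k^{\epsilon}(r)\le-\tfrac12\|p_k^{\epsilon}(r)\|^2/r$ from Lemma \ref{lm:qkInequ} (using $\Phi_{\epsilon}(r)=\|p_k^{\epsilon}(r)\|$ and $\Psi_{\epsilon}(r)=\Phi_{\epsilon}(r)/r$ by Lemma \ref{lm:phiPsi}); the condition reduces to $(1-\mu_1)Q_k^{\epsilon}(r)+\tfrac12\omega\|p_k^{\epsilon}(r)\|^2\le0$, which is implied by $\tfrac12\|p_k^{\epsilon}(r)\|^2\big(L_k-(1-\mu_1)/r\big)\le0$ and therefore holds as soon as $r\le(1-\mu_1)/L_k$. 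For \eqref{eq:cpTr}, I test the defining inequality of the proximal minimization with $v=u_k$ to obtain $\|p_k^{\epsilon}(r)+rg_k\|^2\le 2r\phi_{\epsilon}(u_k)+r^2\|g_k\|^2$; since $\phi_{\epsilon}\ge0$ and $\phi_{\epsilon}(u_k)<\infty$, this forces $\|p_k^{\epsilon}(r)\|\to0$ as $r\searrow0$, so \eqref{eq:cpTr} holds for small $r$ as well. Finite termination then follows as in the convex case: if the incoming step satisfies \eqref{eq:cpDesc}–\eqref{eq:cpTr} the increase loop in Algorithm \ref{alg:genCP} runs at most $M_k^{\text{inc}}+1$ times by construction, and otherwise the decrease loop reaches the admissible regime $r\le(1-\mu_1)/L_k$ after finitely many multiplications by $\beta_{\text{dec}}$.

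It then remains to certify the lower bound \eqref{eq:cpLowerCondTka}, which I would establish by cases using the parameter ordering. The step $t_k^B$ adjacent to the accepted $t_k^A$ (the last rejected trial) necessarily violates \eqref{eq:cpDesc} or \eqref{eq:cpTr}; a violation of \eqref{eq:cpTr} gives $\|p_k^{\epsilon}(t_k^B)\|>\nu_1\Delta_k\ge\nu_4\Delta_k$, while a violation of \eqref{eq:cpDesc}, combined with $Q_k^{\epsilon}\le0$ and $\mu_1<\mu_2$, yields $m_k^{\epsilon}(u_k^{\epsilon}(t_k^B))-m_k^{\epsilon}(u_k)>\mu_1 Q_k^{\epsilon}(t_k^B)\ge\mu_2 Q_k^{\epsilon}(t_k^B)$; in both cases $t_k^B$ is an admissible ``$B$-step.'' Since $t_k^A=\beta_{\text{dec}}t_k^B$ in the decrease branch and $t_k^A=\beta_{\text{inc}}^{-1}t_k^B$ in the increase branch, the relation $t_k^A\ge\nu_2 t_k^B$ holds provided $\nu_2\le\min(\beta_{\text{dec}},\beta_{\text{inc}}^{-1})$, whereas hitting $l=M_k^{\text{inc}}$ forces $t_k^A\ge t_0^A\ge\nu_3$ by the choice of $M_k^{\text{inc}}$; either way \eqref{eq:cpLowerCondTka} is satisfied, so $u_k^{\epsilon}(t_k^A)$ is a NCGCP.

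The identical argument applies verbatim to the non-smoothed $\phi$ from \eqref{eq:defPhi}: every tool invoked — Lemma \ref{lm:phiPsi}, the analogue of Lemma \ref{lm:qkInequ}, nonnegativity and finiteness of $\phi(u_k)$, and the curvature bound — is valid for an arbitrary proper and lower semicontinuous function and never uses the closed form \eqref{eq:formulaProxPhik}. The main obstacle I anticipate is exactly the loss of path continuity and of monotonicity of $\Psi$ that underpin the convex proof: these are what permit a direct selection of the Cauchy step there, and here they must be circumvented by the absolute descent estimate of the first step, which holds for \emph{every} proximal selection and makes termination driven by shrinking $r$ rather than by a continuity or monotonicity argument.
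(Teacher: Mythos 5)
Your proof is correct, and while its overall skeleton matches the paper's (verify that \eqref{eq:cpDesc} and \eqref{eq:cpTr} hold for all sufficiently small $r$, then run the line-search bookkeeping of \cite[Corollary 1]{240proxTrustRegion} to get finite termination and the conditions \eqref{eq:cpLowerCondTka}), you handle the decisive step by a genuinely different argument. The decisive step is \eqref{eq:cpTr}, i.e.\ showing $\|p_k^{\epsilon}(r)\|\to 0$ as $r\searrow 0$, which is exactly what fails for general nonconvex $\phi$ (the paper's counterexample is $I_{\mathbb{Z}}$ at $u\notin\mathbb{Z}$). The paper proves it by exploiting the pointwise structure of the proximal problem in $\Ltwo$: for a.e.\ $x$ the quadratic term $\frac{1}{2t}(v-u)^2$ dominates the scalar minimization as $t\searrow 0$, and the pointwise convergence is upgraded to $\Ltwo$-convergence via monotonicity of $\Phi$ (Lemma \ref{lm:phiPsi}) and the monotone convergence theorem --- this is why the lemma is stated only for $U=\Ltwo$. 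You instead test the proximal minimization with $v=u_k$ and use only that $\phi_{\epsilon}\ge 0$ and $\phi_{\epsilon}(u_k)<\infty$, obtaining $\|p_k^{\epsilon}(r)+rg_k\|^2\le r^2\|g_k\|^2+2r\phi_{\epsilon}(u_k)$ and hence $\|p_k^{\epsilon}(r)\|=\mathcal{O}(\sqrt{r})$. This is more elementary and strictly more general: it needs no pointwise structure, so it would extend the lemma to any Hilbert space (in particular $\Hone$), it gives a quantitative rate, and it isolates precisely why the counterexample is not a counterexample here (there $u\notin\dom\phi$). Your treatment of \eqref{eq:cpDesc} --- the curvature decomposition $m_k^{\epsilon}(u_k^{\epsilon}(r))-m_k^{\epsilon}(u_k)=Q_k^{\epsilon}(r)+\tfrac12\omega\|p_k^{\epsilon}(r)\|^2$ with $\omega\le L_k$, combined with $Q_k^{\epsilon}(r)\le-\tfrac{1}{2r}\|p_k^{\epsilon}(r)\|^2$ from Lemma \ref{lm:qkInequ} --- is essentially the paper's fundamental-theorem-of-calculus argument in equivalent form, correctly picking up the extra factor $\tfrac12$ from the nonconvex version of the lemma; and your explicit case analysis for \eqref{eq:cpLowerCondTka} is what the paper delegates to \cite[Corollary 1]{240proxTrustRegion} (the parameter relations $\nu_2\le\min(\beta_{\text{dec}},\beta_{\text{inc}}^{-1})$ and $\nu_3\le t_0^A$ you impose are the same ones implicit there). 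The only cosmetic omission is the degenerate case $p_k^{\epsilon}(r)=0$, where the curvature quotient is undefined, but then both sides of \eqref{eq:cpDesc} vanish and the condition holds trivially.
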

	\begin{proof}
        One can observe that  $\prox_{t\phi_{\epsilon}}(u+td) - u \to 0$ pointwise and monotonically, since for $t\searrow 0$ the first term with factor $\frac1{2t}$ dominates the pointwise proximal minimization problem 
		\[
		\arg\min_v \frac1{2t}(v-u)^2 - (v-u) d + \frac{\alpha}{2}v^2 + \beta \psi_{\epsilon}(v^2).
		\]
		This implies $\prox_{t\phi_{\epsilon}}(u+td) - u \to 0$ by the monotone convergence theorem. 
        Thus, there is $t_0>0$ such that \eqref{eq:cpTr} is satisfied for all $t < t_0$. The existence of $t_1>0$ such that \eqref{eq:cpDesc} is satisfied for all $t<t_1$ follows as in the convex case in \cite[Theorem 1]{240proxTrustRegion} by the fundamental theorem of calculus and Lemma \ref{lm:qkInequ}.
		However, here we do not treat the case $p_k(t)=0$ separately, because in Lemma \ref{lm:qkInequ} we do not assume $h_k>0$ as in \cite[Lemma 7]{240proxTrustRegion}. 
		With these observations one can proceed as in \cite[Corollary 1]{240proxTrustRegion} to obtain the result. 
	\end{proof}
	\begin{remark}
		For general nonconvex functions $\phi$ it does not hold $\lim_{r\to 0} \prox_{r\phi}(u+rd) \to u$ as in the convex case. A counterexample is for example $\phi(u) = I_{\Z}(u)$ at some $u\notin \Z$.
	\end{remark}
	
	In the convex case, one can prove that generalized Cauchy points satisfy the FCD condition for an appropriately chosen constant $\kappa_{\text{fcd}}$, see Corollary \ref{cor:gcpAlgoTerminates} and \cite[Theorem 2]{240proxTrustRegion}. However, we were not able to obtain such a result in the nonconvex case with the corresponding nonconvex version of $h_k$. 
	Since the function $\Psi$ from Lemma \ref{lm:phiPsi} is not nonincreasing as in the convex case it is harder to relate the decrease of the NCGCP from inequality \eqref{eq:cpDesc} to the nonconvex version of $h_k$. 
	However, with the help of the following lemma one can prove that a NCGCP satisfies the FCD condition \eqref{eq:fcdCond} with the nonconvex model $m_k^{\epsilon}$ instead of the convex one $m_k$ in the unconstrained $\Ltwo$-setting. Note that in the FCD condition \eqref{eq:fcdCond} with nonconvex model we still work with the convex upper bound $\phi_k$ in the definition of $h_k$. 
	
	\begin{lemma}\label{lm:proxDistInequ}
		Let $U=\Ltwo=\Uad$.
		Then there is a constant $c>0$ such that 
		\begin{equation}\label{eq:proxDistIneq}
			\| \prox_{r\phi_k}(u_k - r g_k) -u_k\|_{\Ltwo} \le c  \| \prox_{r\phi_{\epsilon}}(u_k - r g_k) -u_k\|_{\Ltwo}. 
		\end{equation}
	\end{lemma}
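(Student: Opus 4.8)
The plan is to exploit that for $U=\Ltwo=\Uad$ both proximal problems decouple into pointwise scalar problems, reduce the claimed norm inequality to a pointwise estimate, and integrate. Write $w:=u_k-rg_k$. Since $\Uad=\Ltwo$, the projection in \eqref{eq:formulaProxPhik} is the identity, so the convex proximal point is the explicit pointwise shrinkage $v_c:=\prox_{r\phi_k}(w)=w/D$ with $D:=1+\alpha r+2r\beta\psi_{\epsilon}'(u_k^2)$, whereas any selected element $v:=\prox_{r\phi_{\epsilon}}(w)$ solves pointwise $\min_v g_{\mathrm{nc}}(v)$ with $g_{\mathrm{nc}}(v):=\frac{1}{2r}(v-w)^2+\frac{\alpha}{2}v^2+\beta\psi_{\epsilon}(v^2)$. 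As $\psi_{\epsilon}\in C^1$, the minimizer $v$ satisfies the first-order condition $v=w/\tilde D$ with $\tilde D:=1+\alpha r+2r\beta\psi_{\epsilon}'(v^2)$. It then suffices to prove $|u_k-v_c|\le c\,|u_k-v|$ a.e.\ on $\Omega$ and integrate.

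For the core estimate I would combine the majorization with the quadratic structure. Let $g_{\mathrm{c}}(v):=\frac{1}{2r}(v-w)^2+\frac{\alpha}{2}v^2+\beta\psi_{\epsilon}'(u_k^2)v^2$, which is strongly convex with $g_{\mathrm{c}}''=D/r$ and, by the pointwise majorization property from Lemma~\ref{lm:psieps}, dominates $g_{\mathrm{nc}}$ with equality at $v=u_k$. Hence $\frac{D}{2r}(u_k-v_c)^2=g_{\mathrm{c}}(u_k)-g_{\mathrm{c}}(v_c)\le g_{\mathrm{nc}}(u_k)-g_{\mathrm{nc}}(v)$, using $g_{\mathrm{c}}(u_k)=g_{\mathrm{nc}}(u_k)$ and $g_{\mathrm{c}}(v_c)\ge g_{\mathrm{nc}}(v_c)\ge g_{\mathrm{nc}}(v)$. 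For the opposite bound I would Taylor-expand the quadratic part $q(v):=\frac{1}{2r}(v-w)^2+\frac{\alpha}{2}v^2$ around the minimizer $v$, substitute $q'(v)=-2\beta\psi_{\epsilon}'(v^2)v$ from the first-order condition, and bound $\psi_{\epsilon}(u_k^2)-\psi_{\epsilon}(v^2)$ from above by the tangent of the concave $\psi_{\epsilon}$ at $v^2$; the cross terms then collapse to $\beta\psi_{\epsilon}'(v^2)(u_k-v)^2$, yielding $g_{\mathrm{nc}}(u_k)-g_{\mathrm{nc}}(v)\le\frac{\tilde D}{2r}(u_k-v)^2$. Chaining the two inequalities gives the clean pointwise bound $(u_k-v_c)^2\le(\tilde D/D)(u_k-v)^2$, so everything reduces to controlling the ratio $\tilde D/D$.

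The crux is bounding $\tilde D/D$, and here I would split $\Omega$ according to Lemma~\ref{lm:sparsityProxLp}. On $\{|v|\ge u_0(r)\}$ one has $\psi_{\epsilon}'(v^2)\le\frac{p}{2}u_0(r)^{p-2}=\frac{1+\alpha r}{2r\beta(1-p)}$, so $\tilde D\le(1+\alpha r)\frac{2-p}{1-p}$ and therefore $\tilde D/D\le\frac{2-p}{1-p}$, a constant independent of $\epsilon$, $r$, $u_k$ and $g_k$; this already gives $c=\sqrt{(2-p)/(1-p)}$ there. On the complementary set $\{|v|\le\epsilon\}$ the factor $\tilde D$ degenerates (it can be of order $\epsilon^{p-2}$), so the pointwise bound is too weak and I would instead estimate $|u_k-v_c|$ directly from $v_c=w/D$ and $v=w/\tilde D$, using that $v_c,v$ share the sign of $w$ with $|v|\le|v_c|$ and that $|v|\le\epsilon<|u_k|$ forces a definite lower bound on $|u_k-v|$.

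I expect this last region to be the main obstacle. The danger is precisely the configuration in which $|u_k-v|$ is small while $|u_k-v_c|$ is comparatively large; ruling it out requires invoking the \emph{global} optimality of the thresholded minimizer $v$ rather than only its stationarity, i.e.\ comparing the value of $g_{\mathrm{nc}}$ at the small well $\{|v|\le\epsilon\}$ with the large well $\{|v|\ge u_0(r)\}$ to constrain $w$, and hence $|u_k-v|$, from below so that the large factor $\tilde D/D$ is absorbed. Once a uniform bound on $\{|v|\ge u_0(r)\}$ and a direct bound on $\{|v|\le\epsilon\}$ are established, taking $c$ to be the maximum of the two constants and integrating the pointwise inequality over $\Omega$ yields the claim.
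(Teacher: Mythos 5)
Your reduction to pointwise scalar problems, the explicit representations $v_c=w/D$ and $v=w/\tilde D$ (with $w=u_k-rg_k$, $D=1+\alpha r+2r\beta\psi_{\epsilon}'(u_k^2)$, $\tilde D=1+\alpha r+2r\beta\psi_{\epsilon}'(v^2)$), and the two-sided energy argument giving $(u_k-v_c)^2\le(\tilde D/D)(u_k-v)^2$ are all correct (your $g_{\mathrm{c}}$ is missing the additive constant $\beta(\psi_{\epsilon}(u_k^2)-\psi_{\epsilon}'(u_k^2)u_k^2)$ needed for it to majorize $g_{\mathrm{nc}}$ and touch it at $u_k$, but this is harmless since only differences enter). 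On the large well $\{|v|\ge u_0(r)\}$ your uniform bound $\tilde D/D\le(2-p)/(1-p)$ is also correct, and this mechanism is genuinely cleaner than the paper's proof, which instead runs an exhaustive case analysis on the relative positions of $u_k$, $p_1=\prox_{r\phi_{\epsilon}}(w)$ and $p_2=\prox_{r\phi_k}(w)$ and compares second derivatives of the two pointwise objectives to track how fast their slopes change. Your chained bound even absorbs the paper's intermediate case for free: if $|v|\le\epsilon$ but $|u_k|\le 2\epsilon$, then $D\ge 2^{p-2}\tilde D$, so $\tilde D/D\le 2^{2-p}$, which is exactly the paper's constant there.

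The genuine gap is the case you yourself flag: $v$ in the small well, $|v|\le\epsilon$, while $|u_k|>2\epsilon$ (including the mixed-sign configuration $u_k w<0$). There $\tilde D/D$ can be of order $(|u_k|/\epsilon)^{2-p}$, which is unbounded as $\epsilon_k\searrow 0$; since the constant $c$ must be independent of $k$ (hence of $\epsilon_k$) for the fraction-of-Cauchy-decrease argument in Lemma \ref{lm:FCDnonconvexCase} to work, the chained bound is useless on this set, and you only describe the remedy without executing it. This is precisely where the bulk of the paper's proof is spent: writing $p_0$ for the unsmoothed pointwise objective (with $|\cdot|^p$ in place of $\psi_{\epsilon}$), global optimality of $v$ gives $p_0(0)\le p_{\epsilon}(v)\le p_{\epsilon}(|u_k|)$, which after rearranging yields a slope bound, namely $-g_k\le\frac{\alpha}{2}u_k+\beta u_k^{p-1}-\frac{1}{2r}u_k$ in the same-sign case and $-g_k\le\frac{3}{2r}|u_k|+\frac{\alpha}{2}|u_k|+\beta|u_k|^{p-1}$ in the mixed-sign case; inserting these into $v_c=w/D$ gives $|v_c-u_k|\le\frac{p+2}{2p}|u_k|$ resp. $\bigl(4+\frac{1}{p}\bigr)|u_k|$, while $|u_k-v|\ge|u_k|-\epsilon\ge\frac{1}{2}|u_k|$ resp. $|u_k-v|\ge|u_k|$, closing the estimate with an overall constant $c\le 4+\frac{2}{p}$. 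Until this value-comparison step is actually carried out, your proposal does not produce a constant uniform in $\epsilon_k$, $u_k$, $g_k$ and $r$, so as written the proof is incomplete — although the idea you name to fill the hole (global optimality of the thresholded minimizer, not mere stationarity) is exactly the right one, and once it is supplied your framework closes with constants comparable to the paper's.
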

	\begin{proof}
		Define $p_1 \coloneq \prox_{r\phi_{\epsilon}}(u_k - r g_k)$ and $p_2 \coloneq \prox_{r\phi_k}(u_k - r g_k) $.
		Let the objective functions of those pointwise proximal minimization problems be denoted by 
		\begin{align*}
			p_{\epsilon}(u) &\coloneqq  \frac{1}{2r}(u-(u_k-rg_k))^2 + \frac{\alpha}{2}u^2 + \beta \psi_{\epsilon_k}(u^2) \qquad \text{and}  \\
			p_{c}(u) &\coloneqq \frac{1}{2r}(u-(u_k-rg_k))^2 + \frac{\alpha}{2}u^2 + \beta \left( \psi_{\epsilon_k}(u_k^2) + \psi_{\epsilon_k}'(u_k^2)(u^2-u_k^2)\right)
		\end{align*}
		for some fixed iterate $k\ge 0$.
		Let their difference be given by 
		\[
		h(u) \coloneqq p_c(u) - p_{\epsilon}(u) = \beta  \left( \psi_{\epsilon_k}(u_k^2) + \psi_{\epsilon_k}'(u_k^2)(u^2-u_k^2) - \psi_{\epsilon_k}(u^2)\right).
		\]
		Its derivative reads 
		\begin{equation} \label{eq:hPrime}
			h'(u) = 2\beta u \left(\psi_{\epsilon_k}'(u_k^2) - \psi_{\epsilon_k}'(u^2)\right).
		\end{equation}
		Note that it holds $\sign(\prox_{r\phi_{k/\epsilon}}(u_k - r g_k)) = \sign(u_k-rg_k)$. This follows from the fact that the terms in $p_{\epsilon}$ and $p_c$ apart from the first one have their minimum at zero. 
		We prove inequality \eqref{eq:proxDistIneq} pointwise by exhaustion.\\
		\textbf{Case 1: $u_k \ge 0, u_k-rg_k \ge 0$.} 
		\textbf{Case 1.1: $u_k \le  p_1$.} Then it holds by concavity that $\psi_{\epsilon_k}'(u_k^2) \ge \psi_{\epsilon_k}'(p_1^2)$ 
		and therefore $0\le h'(p_1)  = p_c'(p_1)$ by \eqref{eq:hPrime}, definition of $h$ and $p_{\epsilon}'(p_1)=0$. As $p_c$ is an upwards-opening parabola and $p_c'(p_2)=0$, this implies $p_2 \le p_1$. \\
		If $p_2 \ge u_k$, this directly proves inequality \eqref{eq:proxDistIneq}. 
		Thus, we now assume $p_2 < u_k$.\\
		Let $\epsilon_k < u_k$. We compute $p_c''(u_k) = \frac1r + \alpha + \beta p u_k^{p-2}$ and 
		\begin{equation}\label{eq:secondDerPEps}
			p_{\epsilon}''(v) = \frac1r + \alpha + \beta p (p-1) v^{p-2}
		\end{equation} 
		for $v>\epsilon_k$, so $p_c''(u_k) \ge |p_{\epsilon}''(u_k)|$ and $p_{\epsilon}''$ is monotonically increasing by \eqref{eq:secondDerPEps} and $p\in(0,1)$.  Note that since $0<p_c'(u_k)=p_{\epsilon}'(u_k)$ and $p_{\epsilon}'(p_1) = 0$, it holds $p_{\epsilon}''(u_k)<0$. Therefore, $|p_{\epsilon}''|$ is smaller than the constant value of $p_c'$ at least until the following local maximum $u_m$ that precedes the inflection point with $p_{\epsilon}''(v)=0$. Thus, the derivative $p_{\epsilon}'$ decreases slower on $(u_k, u_m)$ than $p_c'$ increases on $(p_2, u_k)$. By $p_c'(u_k) = p_{\epsilon}'(u_k)$, this implies that $|p_2-u_k|\le |u_k-u_m| \le |u_k - p_1|$. If $u_k\le \epsilon_k$, one can follow the same argumentation with $\epsilon_k$ instead of $u_k$, replacing the undefined value of $p_{\epsilon}''(\epsilon_k)$ with $\lim_{v\searrow\epsilon_k} p_{\epsilon}''(v)$.  \\ 
		\textbf{Case 1.2: $u_k \ge p_1$.} In this case, it holds $\psi_{\epsilon_k}'(u_k^2) \le \psi_{\epsilon_k}'(p_1^2)$, so $0 \ge h'(p_1) = p_c'(p_1)$. Then $p_2 \ge p_1$ because $p_c$ is an upwards-opening parabola and the inequality follows directly if $p_2 \le u_k$. 
		Otherwise, if $p_2 \ge u_k$, it holds $h'(p_2) \ge 0$, so $p_{\epsilon_k}'(p_2) = p_c'(p_2) - h'(p_2) = -h'(p_2) \le 0$. Since w.l.o.g. $p_1<p_2$, $p_1$ has to be the first of the two local minima of $p_{\epsilon}$ and thus lies below the threshold of Lemma \ref{lm:sparsityProxLp} with $p_1\in [0, \epsilon_k]$. \\
		If $u_k < \epsilon_k$, then  $p_{\epsilon}$ is already a quadratic function at $u_k$ and thus $p_2 = p_1$ and the claimed inequality holds. \\
		Therefore, assume now $\epsilon_k \le u_k \le 2 \epsilon_k$. 
		As in the previous case, we have $p_c''(v) = \frac1r + \alpha + \beta p u_k^{p-2}$ for all $v\in \R$ and $p_{\epsilon}''(v) = \frac1r + \alpha + \beta p (p-1) v^{p-2}$ for all $v > \epsilon$.
		This yields $|p_{\epsilon}''(u_k)|\le p_c''(u_k)$.
		Using $u_k\le 2\epsilon_k$, one can estimate
		\begin{multline*}
			\lim_{v\to \epsilon_k}  |p_{\epsilon}''(v)| \le \frac1r + \alpha + \beta p |p-1|\epsilon_k^{p-2} \le \frac1r + \alpha + \beta p u_k^{p-2} 2^{2-p} \\
			\le 2^{2-p} \left(\frac1r + \alpha + \beta p u_k^{p-2}\right) = 2^{2-p}p_c''(u_k).
		\end{multline*}
		Monotonicity of $p_{\epsilon}''$ then yields
		\[
		\sup_{v\in(\epsilon_k,u_k]}|p_{\epsilon}''(v)| = \max\left(\lim_{v\to \epsilon_k}  |p_{\epsilon}''(v)| , |p_{\epsilon}''(u_k)| \right) \le 2^{2-p} p_c''(u_k).
		\]
		Let $u_m$ be the local maximum on $(\epsilon_k, u_k)$. Since $p_{\epsilon}'(u_k)=p_c'(u_k)$, we obtain
		\[
		p_c''(u_k)|p_2-u_k|\le \sup_{v\in(\epsilon_k,u_k]}|p_{\epsilon}''(v)| |u_m-u_k| \le 2^{2-p} p_c''(u_k) |u_m-u_k| \le 2^{2-p} p_c''(u_k) |p_1-u_k|, 
		\]
		where we have used $p_1\le\epsilon_k<u_m$.
		Dividing by $p_c''(u_k)\ge 0$ yields inequality \eqref{eq:proxDistIneq}.\\
		
		It remains to consider the case $u_k \ge 2\epsilon_k$. We first establish some bound on $g_k$. Since $j_{\epsilon_k}$ majorizes $j$, the function $p_{\epsilon}$ also majorizes its nonsmoothed version $p_0$. By Lemma \ref{lm:sparsityProxLp} and $p_1 \le \epsilon$ as deduced at the beginning of case 1.2, the minimum of $p_0$ is zero and thus we obtain by optimality of $p_1$ that
		\begin{equation}\label{eq:auxIneqProxDist}
			\frac1{2r}(u_k-rg_k)^2 =p_0(0)\le p_{\epsilon}(p_1) \le p_{\epsilon}(u_k) = \frac1{2r} (rg_k)^2 + \frac{\alpha}{2}u_k^2 + \beta |u_k|^p.
		\end{equation}
		Note that $g_k \le 0$ in this case, as $u_k\le p_2\le u_k-rg_k$. For the second inequality we have used that the $u_k-rg_k$ minimizes the first term in $p_c$ and zero minimizes the remaining terms.  Rearranging \eqref{eq:auxIneqProxDist} and dividing by $u_k \ge 0 $ yields 
		\begin{equation} \label{eq:gkBound}
			-g_k \le \frac{\alpha}{2} u_k + \beta u_k^{p-1}-\frac{1}{2r} u_k .
		\end{equation}
		
		We can then estimate
		\begin{multline}\label{eq:distP2Uk}
			p_2-u_k = \frac{u_k - rg_k}{1+\alpha r + 2r\beta \psi_{\epsilon}'(u_k^2)} -u_k = \frac{ - rg_k - \alpha r u_k - 2r\beta u_k \psi_{\epsilon_k}'(u_k^2)}{1+\alpha r + 2r\beta \psi_{\epsilon}'(u_k^2)} \\
			\le  \frac{ r(\frac{\alpha}{2} u_k + \beta u_k^{p-1}-\frac{1}{2r} u_k)}{1+\alpha r + 2r\beta \psi_{\epsilon}'(u_k^2)} 
			\le \frac{\alpha r u_k}{2\alpha r} + \frac{r\beta u_k^{p-1}  }{pr \beta u_k^{p-2}} \le \frac{p+2}{2p} u_k. 
		\end{multline}
		
		Using $u_k\ge 2 \epsilon_k$, it holds
		\[
		u_k - p_1 \ge u_k - \epsilon_k \ge u_k-\frac12 u_k = \frac12 u_k.
		\] 
		Combining this with \eqref{eq:distP2Uk} leads to inequality \eqref{eq:proxDistIneq} with
		\[
		|p_2-u_k|\le \frac{p+2}{2p} u_k \le \frac{p+2}{p} |p_1-u_k|.
		\]
		
		\textbf{Case 2: $u_k \le 0, u_k - rg_k \ge 0$.} 
		\textbf{Case 2.1: $p_1 \ge |u_k|$.} By \eqref{eq:hPrime}, it holds $h'(p_1)\ge 0$, so $p_c'(p_1) = h'(p_1) + p_{\epsilon}'(p_1 ) = h'(p_1) \ge 0$. This yields $p_2\le p_1$, and thus inequality \eqref{eq:proxDistIneq} is satisfied.\\
		\textbf{Case 2.2:  $p_1 \le |u_k|$.} 
		If $p_2\le |u_k|$, then $|p_2-u_k|\le 2 |u_k| \le 2|p_1-u_k|$. Thus, assume $p_2 \ge |u_k|$, so $p_{\epsilon}'(-u_k)=p_c'(-u_k)\le 0$. As the minimum $p_1$ of $p_{\epsilon}$ is smaller than $|u_k|$, $p_{\epsilon}$ has a local maximum less than or equal to $-u_k$ and it has to hold $p_1\in [0,\epsilon_k]$ by Lemma \ref{lm:sparsityProxLp}.
		If $u_k < \epsilon_k$, then $p_c$ and $p_{\epsilon}$ again coincide on $(-\epsilon_k,\epsilon_k)$ and $p_1=p_2$.
		Therefore, assume now $u_k \ge \epsilon_k$.
		Similarly to case 1.2, it holds $p_0(0) \le p_{\epsilon}(-u_k)$, so 
		\[
		\frac{1}{2r} u_k^2 - u_k g_k + \frac{1}{2r}(rg_k)^2 \le \frac2r u_k^2 - 2 u_k g_k + \frac{1}{2r}(rg_k)^2 + \frac{\alpha}{2}u_k^2 + \beta |u_k|^p.
		\]
		Rearranging and dividing by $|u_k|$ yields
		\[
		-g_k \le \frac{3}{2r}|u_k| + \frac{\alpha}{2}|u_k| + \beta |u_k|^{p-1}.
		\]
		With \eqref{eq:formulaProxPhik}, we can then estimate
		\begin{multline*}
			p_2-u_k = \frac{-rg_k -\alpha r u_k -r\beta u_k p|u_k|^{p-2}}{1+\alpha r + r\beta p|u_k|^{p-2}} \\
			\le \frac{r \left( \frac{3}{2r}|u_k| + \frac{\alpha}{2}|u_k| + \beta |u_k|^{p-1}\right) -\alpha r u_k -r\beta u_k p|u_k|^{p-2}}{1+\alpha r + r\beta p|u_k|^{p-2}} 
			\le \left( \frac32 + \frac12 + \frac1p+1 +1\right) |u_k| 
		\end{multline*}
		Furthermore, it holds $p_1-u_k \ge |u_k| $. Combining these two estimates yields \eqref{eq:proxDistIneq} with $c=4 + \frac{1}{p}$.
		\textbf{Case 3: } $u_k \le 0, u_k-rg_k\le 0$ and \textbf{Case 4: } $u_k \ge 0, u_k-rg_k\le 0$ follow by symmetry from Cases 1 and 2. \\
	\end{proof}
	Note that from the previous proof one can deduce $c\le 4+\frac2p$ in inequality \eqref{eq:proxDistIneq}.
	\begin{remark}
		The presence of control constraints does not allow for an estimate as in the previous lemma. This can be seen for the case $0\le p_2 < u_k \le p_1$. If $u_k=b$, then $p_1=b=u_k$, but $|p_2-u_k|>0$.
	\end{remark}
	\begin{lemma}\label{lm:FCDnonconvexCase}
		Let $U=\Ltwo=\Uad$ and let $u_k^{\epsilon}(t_k^A)$ be a NCGCP. Then it satisfies the FCD condition \eqref{eq:fcdCond} with the nonconvex model $m_k^{\epsilon}$. 
	\end{lemma}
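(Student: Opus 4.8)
The plan is to follow the convex argument behind Corollary \ref{cor:gcpAlgoTerminates} (that is, \cite[Theorem 2]{240proxTrustRegion}), replacing the two places where convexity of $\phi$ is used: the monotonicity of $r\mapsto\Psi(r)$ and the identification of $h_k$ with the convex proximal path. Write $\Phi_\epsilon(r)=\|p_k^\epsilon(r)\|$, $\Psi_\epsilon(r)=\Phi_\epsilon(r)/r$ for the nonconvex path and $\Phi^c(r)=\|p_k(r)\|$, $\Psi^c(r)=\Phi^c(r)/r$ for the convex one, so that $h_k=\Psi^c(r_0)$. By the Cauchy descent \eqref{eq:cpDesc} one has $m_k^\epsilon(u_k)-m_k^\epsilon(u_k^\epsilon(t_k^A))\ge -\mu_1 Q_k^\epsilon(t_k^A)$. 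Since $Q_k^\epsilon$ is nonincreasing by Lemma \ref{lm:qkInequ}, for every $0<r\le t_k^A$ we get $-Q_k^\epsilon(t_k^A)\ge -Q_k^\epsilon(r)\ge \tfrac12\Phi_\epsilon(r)\Psi_\epsilon(r)$, the last step being \eqref{eq:qkEstimate}. This yields the key inequality
\[ m_k^\epsilon(u_k)-m_k^\epsilon(u_k^\epsilon(t_k^A))\ \ge\ \frac{\mu_1}{2}\,\frac{\Phi_\epsilon(r)^2}{r}\qquad\text{for all } 0<r\le t_k^A, \]
which, crucially, uses only monotonicity of $Q_k^\epsilon$ and not of $\Psi_\epsilon$ (the latter failing in the nonconvex case). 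Applying Lemma \ref{lm:proxDistInequ} in the form $\Phi_\epsilon(r)\ge c^{-1}\Phi^c(r)$ then bounds the decrease below by $\tfrac{\mu_1}{2c^2}\Phi^c(r)\Psi^c(r)$ for every $r\le t_k^A$, where now $\Phi^c$ is nondecreasing (Lemma \ref{lm:phiPsi}) and $\Psi^c$ is nonincreasing.

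Next I would split on the size of $t_k^A$ relative to the reference radius $r_0$. If $t_k^A\ge r_0$, choosing $r=r_0$ gives a decrease of at least $\tfrac{\mu_1}{2c^2}\Phi^c(r_0)\Psi^c(r_0)=\tfrac{\mu_1 r_0}{2c^2}h_k^2$; since $\min(h_k/(1+\omega_k),\Delta_k)\le h_k$, this already dominates $\kappa_{\text{fcd}}h_k\min(h_k/(1+\omega_k),\Delta_k)$ once $\kappa_{\text{fcd}}\le \mu_1 r_0/(2c^2)$. If $t_k^A<r_0$, choosing $r=t_k^A$ and using $\Psi^c(t_k^A)\ge\Psi^c(r_0)=h_k$ (once in $\Phi^c(t_k^A)=t_k^A\Psi^c(t_k^A)$ and once directly) yields the decrease bound $\tfrac{\mu_1}{2c^2}t_k^A h_k^2$, so that the whole statement reduces to a lower bound on the accepted step length of the form $t_k^A\ge \kappa\min\!\big(\tfrac{1}{1+\omega_k},\tfrac{\Delta_k}{h_k}\big)$.

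To produce this lower bound I would exploit condition \eqref{eq:cpLowerCondTka}. If $t_k^A\ge\nu_3$, then since $\min(\tfrac{1}{1+\omega_k},\tfrac{\Delta_k}{h_k})\le 1$ the bound holds with $\kappa\le\nu_3$. Otherwise $t_k^A\ge\nu_2 t_k^B$ and I bound $t_k^B$ according to its two defining alternatives. In the reverse-descent alternative (in which, necessarily, $\Phi_\epsilon(t_k^B)<\nu_4\Delta_k\le\kappa_{\text{rad}}\Delta_k$, so the trial step lies in the trust region and its curvature is controlled by $\omega_k$) I would expand, using the definition \eqref{eq:defCurvature}, the identity $m_k^\epsilon(u_k^\epsilon(t_k^B))-m_k^\epsilon(u_k)=Q_k^\epsilon(t_k^B)+\tfrac12\omega(f_k,u_k,p_k^\epsilon(t_k^B))\Phi_\epsilon(t_k^B)^2$, combine it with the reverse inequality defining $t_k^B$ and with \eqref{eq:qkEstimate}, and cancel a factor of $\Phi_\epsilon(t_k^B)$ to obtain $t_k^B\ge (1-\mu_2)/\omega_k\ge(1-\mu_2)/(1+\omega_k)$, giving the first argument of the minimum.

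The trust-region alternative, $\Phi_\epsilon(t_k^B)\ge\nu_4\Delta_k$, is the part I expect to be the main obstacle. Because Lemma \ref{lm:proxDistInequ} only bounds $\Phi^c$ from above by $\Phi_\epsilon$, I cannot transfer the boundary condition to the convex path, and the non-monotonicity of $\Psi_\epsilon$ blocks the usual sublinearity estimate $\Phi_\epsilon(\nu_2 t_k^B)\ge\nu_2\Phi_\epsilon(t_k^B)$. I would resolve it by establishing an a priori upper bound $\Psi_\epsilon(t_k^B)\lesssim h_k$ for the proximal gradient map, obtained from the optimality characterization of the prox (as in \eqref{eq:formulaProxPhik}) together with $\Psi_\epsilon\ge c^{-1}\Psi^c$ and monotonicity of $\Phi_\epsilon$; this turns $\Phi_\epsilon(t_k^B)\ge\nu_4\Delta_k$ into $t_k^B=\Phi_\epsilon(t_k^B)/\Psi_\epsilon(t_k^B)\gtrsim\Delta_k/h_k$, the second argument of the minimum. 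Collecting the three sub-cases and taking $\kappa_{\text{fcd}}$ to be the smallest of the constants produced then yields \eqref{eq:fcdCond} for $m_k^\epsilon$, exactly as in the remaining part of \cite[Theorem 2]{240proxTrustRegion}.
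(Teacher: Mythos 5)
Your overall strategy is the same as the paper's: the paper's entire proof consists of your opening chain of inequalities --- combining Lemma \ref{lm:qkInequ} with Lemma \ref{lm:proxDistInequ} to get $-Q_k^{\epsilon}(r)\ge\tfrac12\Phi_\epsilon(r)\Psi_\epsilon(r)\ge\tfrac1{2c^2}\Phi^c(r)\Psi^c(r)$ --- followed by the instruction to rerun the four cases of \cite[Theorem 2]{240proxTrustRegion}, since the right-hand side now involves the convex path, whose $\Phi^c$ is nondecreasing and $\Psi^c$ nonincreasing. Your handling of the cases $t_k^A\ge r_0$, $t_k^A\ge\nu_3$, and the reverse-descent alternative for $t_k^B$ (the curvature expansion via \eqref{eq:defCurvature}, the dichotomy ensuring $\Phi_\epsilon(t_k^B)\le\kappa_{\text{rad}}\Delta_k$ so that $\omega_k$ controls the curvature, and the cancellation giving $t_k^B\ge(1-\mu_2)/\omega_k$) is a correct and faithful expansion of what that deferral amounts to.

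The gap is in the trust-region-violation case, and it is genuine. You correctly note that Lemma \ref{lm:proxDistInequ} only gives $\Phi^c\le c\,\Phi_\epsilon$ and that sublinearity of $\Phi_\epsilon$ fails, but your proposed repair --- an a priori bound $\Psi_\epsilon(t_k^B)\lesssim h_k$ --- cannot follow from the ingredients you cite and is false in general. The comparison $\Psi_\epsilon\ge c^{-1}\Psi^c$ bounds $\Psi_\epsilon$ from \emph{below}; to get $\Psi_\epsilon(t_k^B)\lesssim h_k=\Psi^c(r_0)$ you would need the reverse comparison $\Phi_\epsilon\le C\Phi^c$, which fails precisely because the path $r\mapsto\prox_{r\phi_{\epsilon}}(u_k-rg_k)$ jumps (Lemma \ref{lm:sparsityProxLp}, Remark \ref{rm:proxNotSingleVal}). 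Concretely, in the unconstrained case with $\alpha=0$, take $u_k\equiv\epsilon_k$ small and $g_k\equiv-G$ with $G$ large: the convexified prox \eqref{eq:formulaProxPhik} has pointwise denominator $1+r_0\beta p\,\epsilon_k^{p-2}$, so $h_k$ is of order $\epsilon_k+G\epsilon_k^{2-p}$ and hence tiny, while for $r$ beyond the (small) jump threshold the prox of $\phi_{\epsilon}$ lands near $u_k-rg_k$, giving $\Psi_\epsilon(r)\approx G\gg h_k$; in this situation even your target conclusion $t_k^B\gtrsim\Delta_k/h_k$ is violated, since $t_k^B$ of order $\Delta_k/G$ already produces $\Phi_\epsilon(t_k^B)\ge\nu_4\Delta_k$. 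Note also that this is the generic case: Algorithm \ref{alg:genCP} always returns $t_k^B>t_k^A$, so one cannot fall back on monotonicity of $\Phi_\epsilon$ alone (if $t_k^A\ge t_k^B$ held, then $\Phi_\epsilon(t_k^A)\ge\nu_4\Delta_k$ together with $\Psi_\epsilon(t_k^A)\ge c^{-1}\Psi^c(t_k^A)\ge c^{-1}h_k$ would finish the argument). So your proposal is incomplete exactly at the point where the paper's one-line appeal to the convex proof is least transparent, and closing it requires an idea beyond the two lemmas that both you and the paper invoke.
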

	Recall that $h_k$ in the FCD condition is defined via the convexified function $\phi_k$
	\begin{proof}
		With the help of inequality \eqref{eq:proxDistIneq} and Lemma \ref{lm:qkInequ} one obtains 
		\[
		-Q_k^{\epsilon}(r) \ge \frac12 \Phi_k^{\epsilon}(r) \Psi_k^{\epsilon}(r) \ge \frac 1{2c^2} \Phi_k(r) \Psi_k(r)
		\]
		This additional estimate allows to proceed as in the convex case for all the four Cases in \cite[Theorem 2]{240proxTrustRegion}, as the right hand side of the above inequality leads us back to the convex case. 
	\end{proof}
	
	\begin{corollary}\label{cor:gcpYieldsTrialNC}
		Let $U=\Ltwo=\Uad$. Using the nonconvex GCP conditions instead of \eqref{eq:gcpCond} in Algorithm \ref{alg:genCP} leads to a trial step that satisfies the FCD condition \eqref{eq:fcdCond} with the nonconvex model $m_k^{\epsilon}$.
	\end{corollary}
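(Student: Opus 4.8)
The plan is to obtain this corollary by chaining together the two preceding lemmas, exactly mirroring how Corollary~\ref{cor:gcpAlgoTerminates} was derived in the convex case. No genuinely new argument is required here; the work has already been carried out in Lemmas~\ref{lm:fcdResult} and~\ref{lm:FCDnonconvexCase}, and the task is simply to verify that their conclusions compose correctly.

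First I would address termination. Running Algorithm~\ref{alg:genCP} with the nonconvex GCP conditions means using the $\epsilon$-versions of \eqref{eq:cpDesc} and \eqref{eq:cpTr} throughout, i.e. testing the candidate step lengths against $m_k^{\epsilon}$, $u_k^{\epsilon}$, $p_k^{\epsilon}$ and $Q_k^{\epsilon}$. Lemma~\ref{lm:fcdResult} shows precisely that this bidirectional line search terminates in finitely many iterations and returns a step length $t_k^A$ defining a NCGCP $u_k^{\epsilon}(t_k^A)$: a sufficiently small step satisfies the trust-region bound \eqref{eq:cpTr} because $\prox_{t\phi_{\epsilon}}(u_k - tg_k) - u_k \to 0$ monotonically as $t\searrow 0$, and it satisfies the descent condition \eqref{eq:cpDesc} by the fundamental-theorem-of-calculus argument together with the monotonicity and the estimate \eqref{eq:qkEstimate} for $Q_k^{\epsilon}$ from Lemma~\ref{lm:qkInequ}.

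Second I would invoke Lemma~\ref{lm:FCDnonconvexCase}, which states that any NCGCP $u_k^{\epsilon}(t_k^A)$ satisfies the FCD condition \eqref{eq:fcdCond} with the nonconvex model $m_k^{\epsilon}$, where $h_k$ is still measured through the convex upper bound $\phi_k$. Composing the two facts yields the claim: the first step produces a NCGCP, and the second step certifies that this NCGCP meets the required fraction-of-Cauchy-decrease condition. The only verification left at the level of the corollary is that the object produced by the line search is literally the input hypothesis of Lemma~\ref{lm:FCDnonconvexCase}, which it is by definition of a NCGCP.

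The point that warrants care — and the reason this is not a verbatim restatement of the convex Corollary~\ref{cor:gcpAlgoTerminates} — is that the FCD condition \eqref{eq:fcdCond} is phrased through $h_k$, which is built from the \emph{convex} surrogate $\phi_k$, whereas the descent of the NCGCP is naturally controlled by the \emph{nonconvex} quantity $Q_k^{\epsilon}$. Bridging this gap is exactly where the pointwise distance inequality \eqref{eq:proxDistIneq} of Lemma~\ref{lm:proxDistInequ} enters, bounding $\|p_k\|$ by a constant multiple of $\|p_k^{\epsilon}\|$ and thereby transferring the convex-case estimate $-Q_k^{\epsilon}(r) \ge \tfrac{1}{2c^2}\Phi_k(r)\Psi_k(r)$ back into the convex framework of \cite[Theorem~2]{240proxTrustRegion}. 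Since this bridging is internal to Lemma~\ref{lm:FCDnonconvexCase}, I do not expect any serious obstacle at this stage: the two essential difficulties, namely finite termination of the line search and the proximal-distance comparison between the convex and nonconvex steps, have already been isolated in the supporting lemmas, so the corollary reduces to their straightforward combination.
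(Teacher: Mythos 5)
Your proposal is correct and follows exactly the paper's own argument: the paper proves this corollary by combining Lemma~\ref{lm:fcdResult} (finite termination of Algorithm~\ref{alg:genCP} with the nonconvex conditions) with Lemma~\ref{lm:FCDnonconvexCase} (any NCGCP satisfies the FCD condition \eqref{eq:fcdCond} with the model $m_k^{\epsilon}$), just as you do. Your additional remarks on the role of the proximal-distance inequality \eqref{eq:proxDistIneq} correctly identify where the convex--nonconvex bridge happens, though that machinery is indeed internal to Lemma~\ref{lm:FCDnonconvexCase} and need not be repeated in the corollary's proof.
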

	We call this version NC-GCP algorithm.
	\begin{proof}
		This follows from combining Lemma \ref{lm:fcdResult} and Lemma \ref{lm:FCDnonconvexCase}. 
	\end{proof}
	The previous Corollary shows that we can also use the NC-GCP algorithm for the nonconvex $L^p$-case to obtain a trial step with sufficient decrease in Line 4 of Algorithm \ref{alg:smoothingTrForLp}, at least in the unconstrained $\Ltwo$-setting. 
	
	\section{Approximate subproblem solutions}\label{sec:subproblem}
	Having computed a generalized Cauchy point that satisfies the FCD condition \eqref{eq:fcdCond}, we want to improve the step by computing a better solution of the nonconvex subproblem. 
		To this end, we minimize the nonconvex model $m_k^{\epsilon}$ with an iterative monotonically decreasing algorithm starting from the iterate $u\kp$ computed in the first step with the GCP Algorithm \ref{alg:genCP}. This nonconvex step improvement leads to an iterate that satisfies the FCD condition \eqref{eq:fcdCond} with the nonconvex model $m_k^{\epsilon}$, see also Remark \ref{rm:fcdCondNonconv}. 

	\subsection{Nonconvex SPG}\label{sec:SPG}
	In \cite[Appendix C]{240proxTrustRegion}, a spectral proximal gradient (SPG) method was used as a subproblem solver to improve upon the trial steps computed with the GCP algorithm. We were trying to employ this algorithm also in our nonconvex setting, but there are two issues:\\
	Firstly, if $\phi$ is nonconvex, one can not argue as in the convex case that the line search of this SPG method terminates after finitely many steps. Thus, we iterate over the proximal parameter $r$ instead. This results in a proximal gradient method with variable step size and initial Barzilai-Borwein step size.\\
	Second, the SPG method requires the computation of a proximal point of 
	\[
	\phi_{\epsilon} + I_{\Delta_k}, \quad \text{where} \quad I_{\Delta_k} \coloneqq I_{B_{\Delta_k}(u_k)}.
	\]
	If $\phi_{\epsilon}$ was convex, this can be done by finding a zero of 
	\begin{equation}\label{eq:funForBrent}
		t\mapsto\|\prox_{rt\phi_{\epsilon}}(u_k + t(u-u_k)) - u_k\| - \Delta_k
	\end{equation}
	in case $\|\prox_{r\phi_{\epsilon}}(u)-u_k\|>\Delta_k$.
	We were not able to prove that roots of \eqref{eq:funForBrent} are proximal points of $\phi_{\epsilon}+I_{\Delta_k}$ as in the convex case. 
	As the function \eqref{eq:funForBrent} is not continuous in our nonconvex setting, it does not even necessarily have a root.
	Regardless, since the mapping \eqref{eq:funForBrent} is monotonically increasing by Lemma \ref{lm:phiPsi} one could compute the largest $t$ such that the function is negative and in this way obtain a point $\prox_{rt\phi_{\epsilon}}(u_k + t(u-u_k))$ that is inside the trust-region and might serve as an approximation for the proximal point of $\phi_{\epsilon}+I_{\Delta_k}$. Using this point as a new iterate of the SPG algorithm works in the examples in the last section, but we were not able to prove descent properties theoretically. 
	In \cite{240proxTrustRegion}, finding zeros of \eqref{eq:funForBrent} is accomplished using Brent’s method, a root-finding algorithm for continuous functions that combines bisection, quadratic interpolation and the secant method. Due to monotonicity of the function, we can also use Brent's method in our setting here to compute the largest $t$ such that the function \eqref{eq:funForBrent} is negative. 
	
	\subsection{Convexified SPG}\label{sec:MMSPG}
	Next, we introduce a convexified SPG algorithm to solve the nonconvex subproblem that is constructed in a way to use the line search from the SPG algorithm in \cite{240proxTrustRegion}. To this end, we construct a convex upper bound $ \phi_{k,l}$ in each iteration. The function $\phi_{k,l}$ is defined similarly as $\phi_k$ above, but for iterate $u_{k,l}$ and the objective of the subproblem.  Thus, we end up with a convex function and can proceed similarly as in the convex case.
	The resulting scheme is similar to the majorize-minimization (MM) method from \cite{sparse} and is given by Algorithm \ref{alg:MMSPG} (MM-SPG).
	
	\begin{algorithm}
		\caption{MM-SPG}\label{alg:MMSPG}
		\begin{algorithmic}[1]
			\Require Initial guess $u_{k,0} = u_k(t_k^A)$ with $f_{k,0}=f_k(u_{k,0})$, 
			$d_{k,0}=\nabla f_k(u_{k,0})$, $s=(u_{k,0}-u_k)$, $b=B_k s$. Parameters $\epsilon_1, \epsilon_2, \mu_1,\beta_1\in (0,1)$, $\lambda_{\min}<\lambda_{\max}$, $L_{\max} \in \N$.
			\State Set $l=0$
			\While {$l<L_{\max}$ \text{and} $h_{k,l}> \min(\epsilon_1, \epsilon_2 h_{k,0})$ }
			\If{$(b,s) \le 0$}
			\State Set $\lambda = 1/\|d_{k,l}\|$
			\Else
			\State Set $\lambda = (s,s)/(b,s)$
			\EndIf
			\State Set $\lambda_l = \max( \lambda_{\min}, \min(\lambda_{\max}, \lambda))$
			\State Define 
			\[
			\phi_{k,l}(u) = \beta \left(\io \psi_{\epsilon_k}(u_{k,l}^2)\dd x + \io \psi_{\epsilon_k}'(u_{k,l}^2)(u^2-u_{k,l}^2) \dd x\right)  + \frac{\alpha}{2}\|u\|^2 + \Iuad + I_{\Delta_k}(u)
			\]
			\State Compute $s = \prox_{\lambda_l \phi_{k,l}}(u_{k,l}-\lambda_l d_{k,l}) -u_{k,l}$ and $b = B_k s$
			\State Set $\alpha_1 = \beta_1^i$, where $i$ is the smallest nonnegative integer such that 
			\[
			f_{k,l+1} + \phi_{k,l}(u_{k,l}+\alpha_1 s) \le f_{k,l} +  \phi_{k,l}(u_{k,l}) + \mu_1 (\alpha (d_{k,l},s) + \phi_{k,l}(u_{k,l}+\alpha_1 s) - \phi_{k,l}(u_{k,l}) )
			\]
			where $f_{k,l+1} = f_{k,l} + \alpha_1 \braket{d_{k,l}, s} + \frac12 \alpha_1^2 \braket{b,s}$, 
			\State Set $u_{k,l+1} = u_{k,l}+\alpha_1 s$ and $d_{k,l+1} = d_{k,l} + \alpha_1 b$
			\State Set $l=l+1$
			\EndWhile
		\end{algorithmic}
	\end{algorithm}
	
	\section{Numerical experiments}\label{sec:numRes}
	We test Algorithm \ref{alg:smoothingTrForLp} with two optimal control problems and compare the results with the proximal gradient method and a majorize-minimization scheme. \\
	In these examples, we work with the quadratic model
	\[
	f_k(u) \coloneqq \braket{g_k, u-u_k} + \frac12 \braket{B_k(u-u_k), u-u_k},
	\]	
	where $g_k = \nabla f(u_k)$ and $B_k = \nabla^2f(u_k)$.
	As described in section \ref{sec:algo}, we first compute a generalized Cauchy point with Algorithm \ref{alg:genCP} and then improve upon it with the SPG algorithm or the MM-SPG algorithm. We also investigate if using NC-GCP improves the performance of the algorithm. Thus, we consider the following methods:
	\begin{itemize}
		\item PG is the proximal gradient method with variable step-size from \cite[section 4.6]{15proxNonsmooth} with a bidirectional line search analogous to Algorithm \ref{alg:genCP}, c.f. \cite{240proxTrustRegion}. We use $\eta = 1e-4$ and $\theta = 0.5$
		to check the condition for sufficient decrease, see \cite[section 6]{15proxNonsmooth}.
		\item MM is the majorize-minimization method from \cite{sparse}.
		\item TR-GCP is the trust region algorithm with only Algorithm \ref{alg:genCP} for the convex GCP case as subproblem solver.
		\item TR-NC-GCP is the trust region algorithm with only Algorithm \ref{alg:genCP} for the nonconvex NC-GCP case as subproblem solver.
		\item TR-SPG is the trust region algorithm with SPG from section \ref{sec:SPG} as second subproblem solver, starting with the result obtained by the convex GCP Algorithm \ref{alg:genCP}.
		\item TR-NC-SPG is the trust region algorithm with SPG from section \ref{sec:SPG} as second subproblem solver, starting with the result obtained by the NC-GCP Algorithm \ref{alg:genCP}.
		\item  TR-MM-SPG is the trust region algorithm with MM-SPG from section \ref{sec:MMSPG} as subproblem solver, starting with the result obtained by the convex GCP Algorithm \ref{alg:genCP}.
		\item  TR-NC-MM-SPG is the trust region algorithm with MM-SPG from section \ref{sec:MMSPG} as subproblem solver, starting with the result obtained by the nonconvex NC-GCP Algorithm \ref{alg:genCP}.
	\end{itemize}
	We remark that we will also test versions NC-SPG and NC-MM-SPG for constrained problems, even though the convergence result in Corollary \ref{cor:gcpYieldsTrialNC} only applies to the unconstrained $\Ltwo$-case. \\
	The following parameters are used in the examples: $\Delta_0 = 10$, $\eta_1 = 10^{-4}$, $\eta_2 = 0.5$, $\gamma_1=\gamma_2 = 0.25$, $\gamma_3=10$, $\mu_1=10^{-4}$, $t_0^A = r_0 = 1$, $\beta_{\text{dec}}=0.5$, $\beta_{\text{inc}} = 10$, $M_{\text{inc}}=2$. If not specified otherwise, we choose $\tau_0=10^{-4}$ for the stopping criterion \eqref{eq:stopCrit} and work on a two-dimensional mesh with $N=256$ nodes in both directions $x$ and $y$. The maxmimum number of iterations in the subproblem solvers SPG and MM-SPG are 10 and 50, respectively. We denote by $K$ the index of the final iteration. \\
	The performance of the different algorithms is compared with respect to the final objective value $F(u_K)$, the number of iterations (iter), the final $\epsilon$-value ($\epsilon_K$), the difference $|u_K-u_{K-1}|$ used as a stopping criterion for the proximal gradient algorithm (dF), the  value $h_K$ and the analog $h_K^{nc}$ with $\phi_k$ replaced by the nonconvex function $\phi_{\epsilon}$, the measure of the domain where $u_K$ vanishes ($|\{u_K=0\}|$), the number of $f$-(feval) and Hessian (Hess) evaluations, the number of nonconvex proximal point evaluations for $\phi_{(\epsilon)}$ (Prox Lp) and the wallclock time ($t$).\\
     We remark that one proximal point computation with the $L^p$-functional is needed to evaluate $h_K^{nc}$, even for the trust-region versions using MM-SPG.\\
	The numerical examples were run on an openSUSE Leap 15.6 Linux machine with a single Intel Core i5-4690 CPU (4 cores, 3.50 GHz base frequency) and 15 GB of RAM. All examples are implemented in Matlab R2024b. \\
	In the SPG subproblem solver and NC-GCP, we have to compute proximal points of $\phi_{\epsilon}$. The resulting minimization problem can not be solved explicitly. We therefore use a gradient method that exploits the structure of the objective function in the proximal point computation: 
	By symmetry, we assume without loss of generality that the argument of the proximal operator is non-negative. If $\epsilon$ is small enough, there is one positive inflection point of the objective when computing $\prox_{r\phi_{\epsilon}}(u)$ that is larger than $\epsilon$. This inflection point is given by 
	\[
	u_0(r) \coloneq \sqrt[^{2-p}]{\frac{rp(1-p)\beta}{1+\alpha r}}.
	\]
	Thus, we can run the gradient method with starting value larger than this inflection point and compare its solution with the minimum on $[0,\epsilon]$ to obtain a proximal point.  This minimum on $[0,\epsilon]$  is given by 
	\[
	M_{\epsilon} \coloneq \min\left(\epsilon, \frac{u}{1+\alpha r + \beta r p/\epsilon^{2-p}}\right).
	\]
	
	We test the algorithm with examples from \cite{lp_cont, 15proxNonsmooth, 240proxTrustRegion}, with $f$ being a tracking type functional given by 
	\[
	f(u) \coloneq \frac12 \|Su-y_d\|^2,
	\]
	where $Su=y_u$ denotes the weak solution of a PDE. Furthermore, we choose $\Uad \coloneq \{ u \in U: -b \le u \le b\}$ for some $b\in\bar\R^+$.\\
	In the implementation we work with finite elements and discretize the controls $u$ with piecewise constants and the state $y_u$ with piecewise linear functions. Then we can solve the PDE numerically to evaluate $F$ and to compute $\cred_k$.\\
	Table \ref{tab:comparisonL1} shows results for a convex example from \cite[section 5.2]{240proxTrustRegion} with $L^1$-regularization. One can observe that in our implementation the trust-region method also performs better than PG just as stated in the paper. However, the difference is not as significant as reported in \cite{240proxTrustRegion}. Therefore, we might expect that also in the nonconvex case the performance differences are not as large.

	\begin{table}[h!]
		\resizebox{\textwidth}{!}{
			\centering
			\begin{tabular}{cccccccccc} 
				\toprule
				alg&$F(u_K)$&iter&dF&$h_K$& $|\{u_K=0\}|$&f eval.&Hess&$t$\\\midrule
				\text{PG}&0.29403&34&1.1102e-16&1.5361e-06&0.41824&110&0&76.5401\\
				\text{TR}&0.29403&6&1.2301e-07&3.8156e-06&0.41815&7&63&24.8788 \\\bottomrule
			\end{tabular}
		}
		\caption{For comparison: results for $p=1$ for the semilinear optimal control problem from \cite[section 5.2]{240proxTrustRegion}.}
		\label{tab:comparisonL1}
	\end{table}
	
	\subsection{Poisson problem}
	First, we consider a linear example from \cite{15proxNonsmooth}, where $y=Su$ is the solution of the Poisson equation 
	\[
	-\Delta y = u \quad \text{ in } \Omega, \qquad y = 0 \quad \text{ on } \partial \Omega .
	\]
	Furthermore, $\Omega \coloneq (0,1)^2$, the desired state $y_d$ is given by
	\[
	y_d(x,y) = 10x \sin(5x)\cos(7y)
	\]
	and $\alpha = 0.01, \beta = 0.01$ and $b=4$.
	Table \ref{tab:poisson} compares the performance of the different schemes for $p=0.9$. One can see that most of the different trust-region variants are faster than PG. However, NC-GCP and NC-MM-SPG are slower. \\
	Table \ref{tab:poisson001} shows similar results for the case $p=0.01$. However, there it is both the MM-SPG variants that are slower than PG.
	Both examples also show that GCP converges to a solution that is less sparse compared to the other computed solutions. 
	
	\begin{table}[h!]
		\resizebox{\textwidth}{!}{
			\centering
			\begin{tabular}{ccccccccccccc}
				\toprule
				alg&$F(u_K)$&iter&$\epsilon_K$&dF&$h_K$&$h_K^{nc}$& $|\{u_K=0\}|$&f eval.&Hess&Prox $L^p$&$t$\\\midrule
				\text{PG}&5.3851&7&0&4.6452e-13&-&3.772e-07&0.52292&20&0&25&47.56\\
				TR-GCP&5.3851&10&2.7557e-18&1.5464e-05&2.3805e-06&0.0016319&0.31263&11&54&1&13.37\\
				TR-NC-GCP&5.3851&7&1.9841e-12&2.2343e-11&1.1305e-06&5.6081e-07&0.52293&8&42&22&37.5212\\
				TR-SPG&5.3851&5&8.3333e-09&1.0844e-09&1.0249e-06&5.0518e-07&0.52251&6&51&13&29.52\\
				TR-NC-SPG&5.3851&6&1.3889e-10&2.3621e-11&2.8675e-07&3.8817e-07&0.52287&7&62&39&68.14\\
				TR-MM-SPG&5.3851&23&3.8682e-47&1.9523e-13&8.9575e-07&4.6945e-07&0.51353&24&1258&1&28.58\\ 
				TR-NC-MM-SPG&5.3851&6&1.3889e-10&2.362e-11&3.2981e-09&4.665e-07&0.52287&7&217&21&73.51\\
				\bottomrule\end{tabular}
		}
		\caption{Poisson problem for $p=0.9$.}
		\label{tab:poisson}
	\end{table}
	
	\begin{table}[h!]
		\resizebox{\textwidth}{!}{
			\centering
			\begin{tabular}{cccccccccccc}
				\toprule
				alg&$F(u_K)$&iter&$\epsilon_K$&dF&$h_K$&$h_K^{nc}$& $|\{u_K=0\}|$&f eval.&Hess&Prox $L^p$&$t$\\\midrule
				\text{PG}&5.3797&6&0&4.9738e-14&0&3.5646e-07&0.55615&16&0&20&30.7835\\
				TR-GCP&5.3839&6&1e-130&0.0001139&1.7089e-06&0.016757&0.089828&7&38&1&9.6772\\
				TR-NC-GCP&5.3797&6&1e-130&6.1298e-10&6.8249e-07&4.6652e-07&0.5484&7&38&21&32.6571\\
				TR-SPG&5.3797&2&1e-130&2.2517e-11&3.9975e-08&3.522e-07&0.54903&3&23&8&13.7695\\
				TR-NC-SPG&5.3797&2&1e-130&1.521e-11&1.8506e-07&2.2061e-07&0.54875&3&23&16&19.2913\\

				TR-MM-SPG&5.3825&6&1e-130&3.858e-15&5.6401e-08&0.0093928&0.15138&7&321&1&69.4626\\ 
				TR-NC-MM-SPG&5.3797&6&1e-130&1.9984e-15&3.4962e-11&3.909e-07&0.54845&7&101&41&57.2521\\
				\midrule
				\text{PG}&5.3789&9&0&2.0428e-14&0&2.2441e-07&0.55608&23&0&30&49.0855\\ 
				TR-GCP&5.3831&6&1e-130&0.00011416&1.3229e-06&0.016875&0.090004&7&38&1&9.8765\\
				\text{TR-NC-GCP}&5.3789&7&1e-130&3.5556e-11&3.898e-07&1.7697e-07&0.5483&8&42&22&39.7132\\
				TR-SPG&5.3789&3&1e-130&9.0697e-12&2.3642e-08&4.1659e-07&0.54851&4&54&31&33.935\\
				TR-NC-SPG&5.3789&2&1e-130&7.8243e-11&3.4016e-07&2.5433e-07&0.5484&3&21&14&15.5141\\
				TR-MM-SPG&5.3816&6&1e-130&1.2098e-07&4.0589e-08&0.0092018&0.15199&7&313&1&75.823\\ 
				TR-NC-MM-SPG&5.3789&7&1e-130&1.1033e-15&4.0281e-11&4.3445e-07&0.54834&8&118&53&82.0246\\
				\bottomrule\end{tabular}
		}
		\caption{Poisson problem for $p=0.01$ for the constrained (upper part) and unconstrained (lower part) case.}
		\label{tab:poisson001}
	\end{table}
	
	\subsection{Semilinear problem} \label{sec:semilinearExample}
	We consider the semilinear PDE
	\[
	-\Delta y + y^3 = u \quad \text{ in } \Omega, \qquad y = 0 \quad \text{ on } \partial \Omega,
	\]
	which was also considered in \cite{15proxNonsmooth} and \cite[section 5.2]{240proxTrustRegion}.
	Here, we investigate the following two examples:
	\begin{equation}\label{S1}
		y_d^1 = 4 \sin(2\pi x)\sin(\pi y)e^x \qquad \text{with} \qquad \alpha =0.002,\, \beta = 0.03 \, \text{ and } \, b=12 \tag{S1}
	\end{equation}
	as in \cite[section 6]{15proxNonsmooth}, and
	\begin{equation}\label{S2}
		y_d^2 = -1 \qquad \text{with} \qquad \alpha = 10^{-4},\, \beta = 10^{-2} \, \text{ and } \, b=25 \tag{S2}
	\end{equation} as in \cite[section 5.2]{240proxTrustRegion}.

	\subsubsection{In \texorpdfstring{$\Ltwo$}{L2}}
	In Tables \ref{tab:semilinear18} and \ref{tab:semilinear28}, one can observe that all the trust-region variants
	outperform the proximal gradient method for Example 2. For Example \eqref{S2}, the trust-region versions need less iterations than PG and are also faster. For Example \eqref{S1}, the performance varies for the trus-region algorithms.
	However, the number of $f$-evaluations is less in the trust-region versions in both cases.\\

	\begin{table}[h!]
		\resizebox{\textwidth}{!}{
			\centering
			\begin{tabular}{cccccccccccc}
				\toprule 
				alg&$F(u_K)$&iter&$\epsilon_K$&dF&$h_K$&$h_K^{nc}$& $|\{u_K=0\}|$&f eval.&Hess&Prox $L^p$&$t$\\\midrule
				\text{PG}&5.8425&7&0&5.8176e-13&0&2.0622e-07&0.35051&19&0&26&58.0639\\
				TR-GCP&5.8488&8&2.4802e-14&0.0015383&8.0169e-06&0.0058667&0.22317&9&46&1&26.2831\\
				TR-NC-GCP&5.8425&7&1.9841e-12&4.0409e-10&2.392e-06&2.6026e-07&0.3511&8&42&22&46.0533\\
				TR-SPG&5.8425&5&8.3333e-09&2.346e-09&1.1119e-07&1.554e-07&0.35194&6&57&19&54.3973\\
				TR-NC-SPG&5.8425&6&1.3889e-10&3.985e-11&4.025e-07&1.0505e-07&0.35089&7&64&41&85.021\\
				TR-MM-SPG&5.8425&7&1.9841e-12&1.4247e-11&1.3016e-06&1.8651e-07&0.34167&8&349&1&142.4404\\ 
				TR-NC-MM-SPG&5.8425&7&1.9841e-12&3.5111e-13&1.3681e-08&1.7429e-07&0.35108&8&216&31&131.0837\\
				\bottomrule\end{tabular}
		}
		\caption{Semilinear example \eqref{S1} for $p=0.8$.}
		\label{tab:semilinear18}
	\end{table}

	\begin{table}[h!]
		\resizebox{\textwidth}{!}{
			\centering
			\begin{tabular}{cccccccccccc}
				\toprule 
				alg&$F(u_K)$&iter&$\epsilon_K$&dF&$h_K$&$h_K^{nc}$& $|\{u_K=0\}|$&f eval.&Hess&Prox $L^p$&$t$\\\midrule
				\text{PG}&0.23947&196&0&6.3838e-16&-&7.9564e-07&0.39395&442&0&615&948.9267\\
				TR-GCP&0.24111&35&9.6776e-77&1.3076e-05&3.6169e-06&0.0010277&0.39099&36&184&1&99.2605\\
				TR-NC-GCP&0.23933&27&9.1837e-57&2.2965e-08&3.351e-06&1.2092e-06&0.41539&28&163&83&165.1155\\
				TR-SPG&0.23945&13&1.6059e-24&2.2497e-11&2.6166e-08&1.6686e-07&0.40265&14&260&232&326.5575\\
				TR-NC-SPG&0.23929&9&2.7557e-16&3.4039e-10&5.1926e-07&6.8518e-08&0.42947&10&163&195&245.5838\\
				TR-MM-SPG&0.23939&7&1.9841e-12&4.8579e-11&8.0666e-07&1.595e-07&0.41182&8&394&1&167.5447\\ 
				TR-NC-MM-SPG&0.23932&7&1.9841e-12&2.0327e-09&1.6862e-06&1.2026e-07&0.43298&8&312&24&158.726\\\midrule
				\text{PG}&0.23805&111&0&4.1828e-13&-&1.5228e-06&0.43752&297&0&371&598.0928\\
				TR-GCP&0.23914&101&1e-130&5.4706e-07&3.184e-05&0.00031692&0.47694&101&569&1&321.6377\\
				TR-NC-GCP&0.23793&63&1e-130&3.9061e-09&4.3679e-06&3.6087e-06&0.45364&64&350&162&371.1875\\
				TR-SPG&0.23802&19&8.2206e-38&1.1316e-10&1.8947e-07&4.542e-07&0.44208&20&386&313&483.4491\\
				TR-NC-SPG&0.23791&7&1.9841e-12&2.2096e-09&2.0729e-06&2.4572e-07&0.45755&8&169&195&254.0695\\
				TR-MM-SPG&0.23798&14&1.1471e-26&3.2161e-11&7.679e-07&3.2199e-07&0.44817&15&766&1&324.2317\\ 
				TR-NC-MM-SPG&0.23789&7&1.9841e-12&1.9871e-11&3.7959e-07&2.9924e-07&0.47508&8&371&29&185.6843\\
				\bottomrule\end{tabular}
		}
		\caption{Semilinear example \eqref{S2} for $p=0.8$ for the constrained (upper part) and unconstrained (lower part) case.}
		\label{tab:semilinear28}
	\end{table}
	
	Table \ref{tab:semilinear2-001} displays results for a small value $p=0.01$. One can observe that the results obtained with the different methods are not as similar as in the other cases. Especially the versions only using proximal points of the convex upper bound do not lead to very sparse solutions. This might be explained by the fact that the term $\psi_{\epsilon_k}'(u_k^2)$ appearing in the linearization in $\phi_k$ is either almost zero or very large. We also note that for such small $p$, we also choose very small values for the sequence $(\epsilon_k)$ because $\epsilon$ is raised to the power $p$ in the smooth approximation $\psi_{\epsilon_k}$.
	
	Furthermore, Table \ref{tab:semilinear2-03tau} compares results for different values of the parameter $\tau_0$ from the stopping criterion \eqref{eq:stopCrit}. In most cases, not a lot more iterations are needed to obtain higher accuracy.
	
	\begin{table}[h!]
		\resizebox{\textwidth}{!}{
			\centering
			\begin{tabular}{cccccccccccc}
				\toprule 
				alg&$F(u_K)$&iter&$\epsilon_K$&dF&$h_K$&$h_K^{nc}$& $|\{u_K=0\}|$&f eval.&Hess&Prox $L^p$&$t$\\\midrule
				\text{PG}&0.15699&185&0&9.8527e-13&-&2.5776e-07&0.12666&385&0&560&723.029\\
				TR-GCP&0.15814&31&1e-130&1.071e-06&4.0806e-06&0.00020802&0.045013&32&176&1&99.1437\\
				TR-NC-GCP&0.15696&24&1e-130&1.0415e-08&3.9807e-06&3.7055e-06&0.14946&25&140&69&137.9696\\
				TR-SPG&0.15718&4&1e-130&7.3917e-08&1.0911e-07&1.6915e-07&0.18857&5&75&99&103.1187\\
				TR-NC-SPG&0.15718&4&1e-130&7.0686e-08&1.0802e-07&1.6196e-07&0.1886&5&76&117&117.5869\\
				TR-MM-SPG&0.1575&4&1e-130&1.8382e-06&2.0585e-06&3.3232e-07&0.088486&5&94&1&42.1471\\ 
				TR-NC-MM-SPG&0.15688&6&1e-130&3.0315e-11&1.7708e-10&3.0491e-07&0.14258&7&118&28&72.9381\\
				
				\bottomrule\end{tabular}
		}
		\caption{Semilinear example \eqref{S2} for $p=0.01$. 
		}
		\label{tab:semilinear2-001}
	\end{table}
	
	\begin{table}[h!]
		\resizebox{\textwidth}{!}{
			\centering
			\begin{tabular}{cccccccccccccc} 
				\toprule
				$\tau_0$&alg&$F(u_K)$&iter&$\epsilon_K$&dF&$h_K$&$h_K^{nc}$& $|\{u_K=0\}|$&f eval.&Hess&Prox $L^p$&$t$\\\midrule
				1e-4&TR-SPG&0.17005&4&0.0001&1.0595e-07&4.5525e-07&1.536e-07&0.2421&5&83&104&111.2823\\
				&TR-MM-SPG&0.17044&4&0.0001&1.728e-07&3.8581e-06&2.3202e-07&0.24167&5&148&1&57.2107\\\midrule
				1e-6&TR-SPG&0.17004&5&1e-05&9.3771e-10&4.1042e-08&1.9816e-07&0.2421&6&98&111&119.4242\\
				&TR-MM-SPG&0.17042&6&1e-06&1.8942e-11&2.066e-10&2.1286e-07&0.24167&7&199&1&73.3469\\\midrule
				1e-8&TR-SPG&0.17003&16&1e-15&4.9405e-15&1.4958e-09&1.7923e-07&0.2421&16&273&223&238.8856\\
				&TR-MM-SPG&0.17042&6&1e-06&1.8942e-11&2.066e-10&2.1286e-07&0.24167&7&199&1&76.2117\\\bottomrule
			\end{tabular}
		}
		\caption{Semilinear example \eqref{S2} for $p=0.3$ for different values of $\tau_0$.}
		\label{tab:semilinear2-03tau}
	\end{table}

	\subsubsection*{Localized control}
	Next, we consider an example with a localized control, so we consider the modified semilinear PDE
	\[
	-\Delta y + y^3 = u\chi_{\omega} \quad \text{ in } \Omega, \qquad y = 0 \quad \text{ on } \partial \Omega,
	\]
	for some measurable subset $\omega \subseteq \Omega$. Here we choose $\omega = B_{0.4}( (0.6, 0.4))$. Table \ref{tab:localized} shows results for Example \ref{S2}.  One can observe that the trust-region versions perform more efficiently than PG.
	Table \ref{tab:localizedMeshSizes} presents results for different mesh sizes. The wallclock time increases for smaller mesh sizes as to be expected, but the number of iterations stays the same. This indicates the trust-region algorithm is mesh independent. 
	
	\begin{table}[h!]
		\resizebox{\textwidth}{!}{
			\centering
			\begin{tabular}{cccccccccccc}
				\toprule 
				alg&$F(u_K)$&iter&$\epsilon_K$&dF&$h_K$&$h_K^{nc}$& $|\{u_K=0\}|$&f eval.&Hess&Prox $L^p$&$t$\\\midrule
				\text{PG}&0.21949&65&0&8.9836e-13&-&2.3757e-07&0.51598&139&0&201&179.4807\\
				TR-GCP&0.22205&15&7.6472e-29&2.144e-05&3.1954e-06&0.00052933&0.5459&16&74&1&41.3081\\
				TR-NC-GCP&0.21939&22&8.8968e-45&5.6677e-09&3.2348e-06&1.8996e-06&0.5291&23&125&60&85.7864\\ 
				TR-SPG&0.21952&5&8.3333e-09&2.2829e-11&1.7172e-08&1.0129e-07&0.53806&6&94&116&75.9283\\
				TR-NC-SPG&0.2194&7&1.9841e-12&2.299e-11&7.5034e-08&1.8749e-08&0.53397&8&99&130&89.636\\
				TR-MM-SPG&0.21942&5&8.3333e-09&2.8599e-12&1.1969e-08&1.0299e-07&0.53301&6&179&1&79.2657\\ 
				TR-NC-MM-SPG&0.21939&6&1.3889e-10&9.2577e-10&2.6294e-07&8.1947e-08&0.53107&7&155&21&74.7195\\\midrule
				\text{PG}&0.20844&37&0&3.8858e-16&-&5.8858e-07&0.60659&130&0&147&153.5447\\
				TR-GCP&0.21133&101&1e-130&4.7092e-07&9.0465e-06&0.00023579&0.63374&101&545&1&298.209\\  
				\text{TR-NC-GCP}&0.20841&33&1.1516e-71&2.2118e-08&4.0511e-06&4.2746e-06&0.61291&34&181&83&128.7169\\
				TR-SPG&0.20849&7&1.9841e-12&1.5953e-10&7.9971e-08&3.5677e-07&0.60822&8&155&151&116.8268\\
				TR-NC-SPG&0.20836&5&8.3333e-09&2.4859e-08&1.5097e-06&4.6399e-07&0.62275&6&105&132&88.3064\\
				TR-MM-SPG&0.20844&5&8.3333e-09&3.0631e-11&1.7156e-07&3.708e-07&0.60759&6&286&1&118.1968\\ 
				TR-NC-MM-SPG&0.20841&6&1.3889e-10&3.6295e-12&4.1405e-08&3.6728e-07&0.60967&7&243&30&108.6518\\
				\bottomrule\end{tabular} 
		}
		\caption{Semilinear example \eqref{S2} with localized control for $p=0.5$ for the constrained (upper part) and unconstrained (lower part) case.}
		\label{tab:localized}
	\end{table}
	
	\begin{table}[h!]
		\resizebox{\textwidth}{!}{
			\centering
			\begin{tabular}{cccccccccccccc}
				\toprule
				$N$&alg&$F(u_K)$&iter&$\epsilon_K$&dF&$h_K$&$h_K^{nc}$& $|\{u_K=0\}|$&f eval.&Hess&Prox $L^p$&$t$\\\midrule
				128&TR-SPG&0.23992&4&0.0001&1.8287e-07&4.1572e-06&1.6687e-06&0.55502&5&69&96&28.0599\\
				&TR-MM-SPG&0.2399&5&1e-05&7.2945e-09&3.5619e-08&1.3257e-07&0.5564&6&200&1&18.4843\\\midrule
				256&TR-SPG&0.23994&4&0.0001&1.8255e-07&4.1819e-06&1.671e-06&0.55524&5&69&96&118.8294\\
				&TR-MM-SPG&0.23992&5&1e-05&7.2421e-09&4.4985e-08&1.3369e-07&0.55618&6&212&1&95.2375\\\midrule
				512&TR-SPG&0.23994&4&0.0001&1.8216e-07&4.199e-06&1.6684e-06&0.55513&5&69&96&601.2989\\
				&TR-MM-SPG&0.23992&5&1e-05&7.2258e-09&2.7043e-08&1.3425e-07&0.55612&6&224&1&519.2016\\
				\bottomrule
			\end{tabular} 
		}
		\caption{Semilinear example \ref{S2} with localized control and $p=0.7$ for different mesh sizes.}
		\label{tab:localizedMeshSizes}
	\end{table}
	
	\subsubsection*{In \texorpdfstring{$\Hone$}{H01}}
	If $V=\Hone$, we use MM-SPG to solve the trust-region subproblems in order to avoid computing proximal points of the $L^p$-functional in $\Hone$.
	Recall that MM-SPG only requires evaluations of the proximal map for the convex upper bounds of the objective. We compare the
	algorithm with the majorize minimization scheme (MM) developed in \cite{sparse}. In both examples in Tables \ref{tab:H1S1} and \ref{tab:H1S2}, MM is slower than TR-MM-SPG and the trust-region algorithm needs considerably less $f$-evaluations.
	
	\begin{table}[h!]
		\resizebox{\textwidth}{!}{
			\centering
			\begin{tabular}{cccccccccc}
				\toprule 
				alg&$F(u_K)$&iter&$\epsilon_K$&dF&$h_K$&$|\{u_K=0\}|$&f eval.&t\\\midrule
				\text{MM}&6.2142&151&1.232e-08&1.4112e-09&8.4623e-11&0.37673&303&325.8849\\
				TR-MM-SPG&6.2142&5&8.3333e-09&9.6167e-07&7.8252e-07&0.37495&13&196.4098\\
				\bottomrule\end{tabular}
		}
		\caption{Semilinear example S1 in $\Hone$ with $p=0.5$.}\label{tab:H1S1}
	\end{table}
	
	\begin{table}[h!]
		\resizebox{\textwidth}{!}{
			\centering
			\begin{tabular}{ccccccccc}
				\toprule
				alg&$F(u_K)$&iter&$\epsilon_K$&dF&$h_K$&$|\{u_K=0\}|$&f eval.&t\\\midrule
				MM&0.49201&251&3.2724e-13&9.1407e-12&1.0715e-13&0.45484&520&533.244\\ 
				MM-SPG&0.49208&8&1e-09&2.7061e-05&8.7889e-06&0.49984&19&290.1151\\
				\bottomrule\end{tabular}
		}
		\caption{Semilinear example S2 in $\Hone$ with $p=0.9$.}\label{tab:H1S2}
	\end{table}
	
	\subsection{Remarks on NC-GCP}
	When comparing the versions with only Algorithm \ref{alg:genCP} as subproblem solver, TR-NC-GCP mostly requires fewer iterations than TR-GCP across all examples, although it is slower overall. TR-NC-GCP leads in most cases also to results that are closer to the other numerical solutions. \\
	When comparing the results of TR-SPG with TR-NC-SPG and TR-MM-SPG with TR-NC-MM-SPG, the performance seems to be depending on the example.
	However, one can observe that among the unconstrained examples, the nonconvex versions mostly yield better results. Therefore, the fact that Lemma \ref{lm:proxDistInequ} and the subsequent Corollary \ref{cor:gcpYieldsTrialNC} only hold in the unconstrained case might also be relevant in practical implementations. \\
	Furthermore, choosing TR-NC-MM-SPG over TR-MM-SPG or TR-NC-GCP over TR-GCP helps for small values of $p$, when TR-MM-SPG does not really lead to sparse solutions. Tables \ref{tab:poisson001} and \ref{tab:semilinear2-001} show that TR-NC-MM-SPG leads to results that are sparser and closer to those obtained with the other algorithms.

	\bibliographystyle{plainnat} 
	\bibliography{trustRegionRestructure.bib}
	
\end{document}